\newtheoremstyle{mystyle}
  {}
  {}
  {\normalfont}
  { }
  {\bfseries}
  {}
  {10pt}
  { }
\theoremstyle{mystyle}
\newtheorem{theorem}{Theorem}
\newtheorem{lemma}{Lemma}
\newtheorem{remark}{Remark}
\DeclareMathOperator*{\argsup}{arg\,sup}
\title[Adaptive testing method for ergodic diffusions]{Adaptive testing method for ergodic diffusion processes based on high frequency data}
\author[T Kawai]{Tetsuya Kawai $^{1}$}
\author[M Uchida]{Masayuki Uchida $^{1,2}$}
\address{$^{1}$Graduate School of Engineering Science, Osaka University}
\address{$^{2}$Center for Mathematical Modeling and Data Science (MMDS), Osaka University and JST CREST}
\begin{document}

\begin{abstract}
We consider parametric tests for multidimensional ergodic diffusions based on high frequency data. We propose two-step testing method for diffusion parameters and drift parameters. To construct test statistics of the tests, we utilize the adaptive estimator and provide three types of test statistics: likelihood ratio type test,  Wald type test and Rao's score type test. It is proved that these test statistics converge in distribution to the chi-squared distribution under null hypothesis and have consistency of the tests against alternatives. Moreover, these test statistics converge in distribution to the non-central chi-squared distribution under local alternatives. We also give some simulation studies of the behavior of the three types of test statistics.
\end{abstract}

\keywords{Asymptotic theory; Consistency of test; Likelihood ratio test; Rao's score test; Stochastic differential equation; Wald test.}

\maketitle

\section{Introduction}\label{sec1}
We consider a $d$-dimensional diffusion process satisfying the following stochastic differential equation:
\begin{equation}\notag
\begin{cases}
dX_t = b(X_t,\beta) dt + a(X_t,\alpha)dW_t \quad t\in[0,T],\\
X_0 = x_0,
\end{cases}
\end{equation}
where $W_{t}$ is the $r$-dimensional standard Wiener process, $\alpha\in\Theta_{\alpha}\subset\mathbb{R}^{p_1},\ \beta \in \Theta_{\beta} \subset \mathbb{R}^{p_2}$, $\theta =(\alpha,\beta)$, $\Theta := \Theta_{\alpha} \times \Theta_{\beta}$ being compact and convex parameter space, $a:{\mathbb{R}}^{d} \times {\Theta}_{\alpha} \to {\mathbb{R}}^{d}\otimes {\mathbb{R}}^{r}$ and $b:{ \mathbb{R}}^{d} \times {\Theta}_{\beta} \to {\mathbb{R}}^{d}$ are known except for the parameter $\theta$. 
We assume the true parameter $\theta_0=(\alpha_0,\beta_0)$ belongs to $\mathrm{Int}(\Theta)$. 
The data are discrete observations $(X_{t^n_i})_{0\le i \le n}$, where $t^n_i=ih_n$ for $i =0, 1, \ldots, n$, 
and the discretization step $h_n$ satisfies  $h_n\to 0,\ nh_n\to \infty$ and $n h_n^2\to 0$ 
as $n\to\infty$.

Diffusion processes 
are used as the mathematical models to describe the random development of the phenomena
depending on time in many fields such as physics, neuroscience, meteorology, 
epidemiology and finance.
For these models, the data are discretely observed.
The statistical inference for ergodic diffusion process based on discrete observations has been studied by many researchers; see Florens-Zmirou \cite{florens_1989},\ 
Yoshida \cite{yoshida1992estimation, yoshida2011QLA}, Genon-Catalot and Jacod \cite{genon_1993},\ 
Kessler \cite{Kessler_1995, Kessler_1997},\ 
Uchida and Yoshida \cite{Uchida_2012} and reference therein. In particular, 
Kessler \cite{Kessler_1995, Kessler_1997}
proposed the adaptive maximum likelihood (ML) type estimator
and joint ML type estimator which has asymptotic efficiency 
under $nh_n^p \to 0$,  
where $p$ is an arbitrary integer with $p\geq2$. 
Uchida and Yoshida \cite{Uchida_2012} 
presented 
the polynomial type large deviation of adaptive statistical random fields 
under $nh_n^p \to 0$
and moment convergence of the adaptive ML type estimator 
with asymptotic efficiency. 

The parametric testing problem for ergodic diffusions has been studied as the following joint test:
\begin{equation}\label{j.test}
\begin{cases}
H_0:\ \alpha_1=\cdots=\alpha_{r_1}=0,\quad\beta_1=\cdots=\beta_{r_2}=0,\quad(1\leq r_1\leq p_1,\ 1\leq r_2\leq p_2)\\
H_1:\  \text{not}\ H_0.
\end{cases}
\end{equation}
Kitagawa and Uchida \cite{Kitagawa_2014} proposed three kinds of test statistics 
(likelihood ratio type test,  Wald type test and Rao's score type test)
and proved their asymptotic properties. De Gregorio and Iacus \cite{de2013family, de2019empirical} constructed the test statistics 
by means of 
$\phi$-divergence measure
and the empirical $L^2$-distance
when $r_1=p_1$ and $ r_2=p_2$.

In this paper, we consider the following set of tests instead of \eqref{j.test}:
\begin{equation}\label{ada.test}
\begin{cases}
H_0^{(1)}:\ \alpha_1=\cdots=\alpha_{r_1}=0,\\
H_1^{(1)}:\ \text{not}\ H_0^{(1)},
\end{cases}
\quad
\begin{cases}
H_0^{(2)}:\ \beta_1=\cdots=\beta_{r_2}=0,\\
H_1^{(2)}:\ \text{not}\ H_0^{(2)}.
\end{cases}
\end{equation}
The set of tests \eqref{ada.test} give more information about parameters than the test \eqref{j.test}. The test \eqref{j.test} provides only two interpretations: $H_0$ is rejected or $H_0$ is not rejected.  
On the other hand, the tests \eqref{ada.test} gives the following four conclusions:
(i) $H_0^{(1)}$ is rejected and $H_0^{(2)}$ is rejected; \
(ii) $H_0^{(1)}$ is rejected and $H_0^{(2)}$ is not rejected; \
(iii) $H_0^{(1)}$ is not rejected and $H_0^{(2)}$ is rejected; \ 
(iv) $H_0^{(1)}$ is not rejected and $H_0^{(2)}$ is not rejected.
We utilize the adaptive ML type estimator of Uchida and Yoshida \cite{Uchida_2012} 
and construct three types of test statistics. Furthermore, we prove that these test statistics converge in distribution to the chi-squared distribution under null hypothesis, have consistency of the tests under alternatives and converge in distribution to the non-central chi-squared distribution under local alternatives. 
For the asymptotic null distribution of
adaptive test statistics based on local means for noisy ergodic diffusion processes
and consistency of the tests under alternatives ,
see Nakakita and Uchida \cite{nakakita2019adaptive}.

The paper is organized as follows. 
In Section \ref{sec2}, notation and assumptions are introduced. 
In Section \ref{sec3}, we state main results. 
Two quasi log likelihood functions are  constructed and
three kinds of adaptive test statistics are proposed.
Moreover,  
their asymptotic properties
are shown. 
In Section \ref{sec4}, 
we give some examples and simulation results of the asymptotic performance for
three types of test statistics for 1-dimensional ergodic diffusion processes.
Section 5 is devoted to the proofs of the results presented in Section 3.

\section{Notation and assumptions}\label{sec2}
Let $\partial_i:=\partial/\partial{x_i},\ \partial_{\alpha_i}:=\partial/\partial{\alpha_i},\ \partial_{\beta_i}:=\partial/\partial{\beta_i},\ \partial_{\alpha}:=(\partial_{\alpha_1},\ldots,\partial_{\alpha_{p_1}})^\top,\ \partial_{\beta}:=(\partial_{\beta_1},\ldots,\partial_{\beta_{p_2}})^\top$,\ $\partial^2_{\alpha}:=\partial_\alpha\partial_\alpha^\top,\ \partial^2_{\beta}:=\partial_\beta\partial_\beta^\top,\ \partial^2_{\alpha\beta}:=\partial_\alpha\partial_\beta^\top$, where $\top$ is the transpose of a matrix. 
For ($m\times n$)-matrices $A$ and $B,$ 
it is defined that $A^{\otimes 2 } :=AA^\top$,\ $\|A\|^2:=\mathrm{tr}(A^{\otimes 2})$,\ $B[A] := \mathrm{tr}(B A^\top)$, and we set $S(x,\alpha) := \left(a(x, \alpha)\right)^{\otimes 2 }$. 
For $l \ge 1$ and $m\ge 1$, let $C_{\uparrow}^{l,m}(\mathbb{R}^{d} \times \Theta)$ be the set of functions $f$ satisfying the following conditions: $(\mathrm{i})$\ $f(x,\theta)$ is $l$ times continuously differentiable with respect to $x$;  $(\mathrm{ii})$\ $f(x,\theta )$ and all its $x$-derivatives up to order $l$ are $m$ times continuously differentiable with respect to $\theta$;\ $(\mathrm{iii})$ $f(x,\theta )$ and all its derivatives are of polynomial growth in $x$, uniformly in $\theta$. Next, for any positive sequence $u_n$, $R:\Theta\times\mathbb{R}\times\mathbb{R}^d\to\mathbb{R}$ denotes a function with a constant $C>0$ such that for all $\theta\in\Theta$ and $x\in\mathbb{R}^d$, $|R(\theta,u_n,x)|\leq u_nC(1+|x|)^C$. $\overset{P}{\to}$ and $\overset{d}{\to}$ indicate convergence in probability and convergence in distribution, respectively. 
Let $I(\theta;\theta_0)$ be the $(p_1+p_2) \times (p_1+p_2)$-matrix defined as 
\begin{align*}
I(\theta;\theta_0)&:=\begin{pmatrix}
I_{a}(\alpha;\alpha_0)&0\\
0&I_b(\theta;\theta_0)
\end{pmatrix}, 
\end{align*}
where
$I_a(\alpha;\alpha_0)=(I_a^{(ij)}(\alpha;\alpha_0))_{1\leq i,j\leq p_1}$, 
$I_b(\theta;\theta_0)=(I_b^{(ij)}(\theta;\theta_0))_{1\leq i,j\leq p_2}$,
\begin{align*}
I_a^{(ij)}(\alpha;\alpha_0)&=\frac{1}{2}\int\mathrm{tr}\left(S^{-1}(\partial_{\alpha_i}S)S^{-1}(\partial_{\alpha_j}S)S^{-1}+S^{-1}(\partial_{\alpha_j}S)S^{-1}(\partial_{\alpha_i}S)S^{-1}\right.\\
&\quad\left. -S^{-1}(\partial_{\alpha_i}\partial_{\alpha_j}S)S^{-1}\right)(x,\alpha)S(x,\alpha_0)\mu_{\theta_0}(dx)\\
&\quad+\frac{1}{2}\int\mathrm{tr}\left(S^{-1}(\partial_{\alpha_i}\partial_{\alpha_j}S)-S^{-1}(\partial_{\alpha_i}S)S^{-1}(\partial_{\alpha_j}S)\right)(x,\alpha) \mu_{\theta_0}(dx),\\
I_b^{(ij)}(\theta;\theta_0)&=\int{(b(x,\beta)-b(x,\beta_0))^\top S^{-1}(x,\alpha)(\partial_{\beta_i}\partial_{\beta_j}b(x,\beta))}\mu_{\theta_0}(dx)\\
&\quad+\int(\partial_{\beta_i}b(x,\beta))^\top S^{-1}(x,\alpha)(\partial_{\beta_j}b(x,\beta))\mu_{\theta_0}(dx).
\end{align*}

We make the following assumptions.
\begin{enumerate}
\item[\bf A1] There exists $C>0$ such that for all $x,y \in\mathbb{R}^d$,

$\displaystyle{\sup_{\alpha \in \Theta_{\alpha}} \|a(x, {\alpha})-a(y, {\alpha} )\|+\sup_{\beta \in \Theta_{\beta}}|b (x,{\beta})-b (y, {\beta})|\leq C|x-y|}.$
\item[\bf A2] The diffusion process $X$ is ergodic with its invariant measure $\mu_{\theta_0}(dx)$.
\item[\bf A3] $\displaystyle \inf_{x, \alpha} \det(S(x, \alpha ))>0$.
\item[\bf A4] For all $p \ge 0,$ $\displaystyle  \sup_{t} E_{\theta}[|X_{t}|^p]<\infty$.
\item[\bf A5] $a \in C_{\uparrow}^{4,3}( \mathbb{R}^{d} \times \Theta_{\alpha})$,\ $b \in C_{\uparrow}^{4,3}( \mathbb{R}^{d} \times \Theta_{\beta})$.
\item[\bf A6] $b(x,\beta) = b(x, \beta_{0} ) \ \ \text{for} \ \mu_{\theta_{0}} \ \text{a.s. all} \ x \Rightarrow \beta = \beta_{0}.$

\hspace{-0.45cm}$S(x,\alpha) = S(x, \alpha_{0} ) \ \text{for} \ \mu_{\theta_{0}} \ \text{a.s. all} \ x \Rightarrow \alpha = \alpha_{0}.$
\item[\bf A7]  $I (\theta_{0};\theta_0)$ is non-singular.
\end{enumerate}

\section{Main results}\label{sec3}
\subsection{Asymptotic distribution under null hypothesis}\label{sec3.1}
\ 

For $1\leq r_1\leq p_1$ and $1\leq r_2\leq p_2$, we consider the following set of tests:
\begin{align*}
\begin{cases}
H_0^{(1)}:\ \alpha_1=\cdots=\alpha_{r_1}=0,\\
H_1^{(1)}:\ \text{not}\ H_0^{(1)},
\end{cases}
\quad
\begin{cases}
H_0^{(2)}:\ \beta_1=\cdots=\beta_{r_2}=0,\\
H_1^{(2)}:\ \text{not}\ H_0^{(2)}.
\end{cases}
\end{align*}
Let $\tilde{\Theta}_\alpha:=\{\alpha\in\Theta_\alpha\ |\ \alpha\  \text{holds}\  H_0^{(1)}\}$, $\tilde{\Theta}_\beta:=\{\beta\in\Theta_\beta\ |\ \beta\  \text{holds}\ H_0^{(2)}\}$, $\tilde{\Theta}:=\tilde{\Theta}_\alpha\times\tilde{\Theta}_\beta$. 

The quasi log likelihood functions are defined as follows:
 \begin{align*}
U_n^{(1)}(\alpha)&:=-\frac{1}{2}\sum_{i=1}^{n}\left\{h_n^{-1}S^{-1}(X_{t_{i-1}^n},\alpha)\left[(X_{t_{i}^n}-X_{t_{i-1}^n})^{\otimes 2}\right]+\log\det S(X_{t_{i-1}^n},\alpha)\right\},\\
U_n^{(2)}(\beta|\bar{\alpha})&:=-\frac{1}{2}\sum_{i=1}^{n}\left\{h_n^{-1}S^{-1}(X_{t_{i-1}^n},\bar{\alpha})\left[(X_{t_{i}^n}-X_{t_{i-1}^n}-h_nb(X_{t_{i-1}^n},\beta))^{\otimes 2}\right]\right\}.
\end{align*}
The adaptive ML type estimator under $\Theta$,
$\hat{\theta}_n=(\hat{\alpha}_n,\hat{\beta}_n)$
and the adaptive ML type estimator under $\tilde{\Theta}$,
$\tilde{\theta}_n=(\tilde{\alpha}_n,\tilde{\beta}_n)$ 
 are defined as
\begin{align*}
\hat{\alpha}_n :=\argsup_{\alpha\in{\Theta}_\alpha}U_n^{(1)}(\alpha),\quad&\tilde{\alpha}_n :=\argsup_{\alpha\in\tilde{\Theta}_\alpha}U_n^{(1)}(\alpha),\\
\hat{\beta}_n :=\argsup_{\beta\in{\Theta}_\beta}U_n^{(2)}(\beta|\hat{\alpha}),\quad&\tilde{\beta}_n :=\argsup_{\beta\in\tilde{\Theta}_\beta}U_n^{(2)}(\beta|\hat{\alpha}).
\end{align*}
Next we set
\begin{align*}
&I_{a,n}(\alpha):=-\frac{1}{n}\partial_\alpha^2U_n^{(1)}(\alpha),\quad I_{b,n}(\beta|\alpha):=-\frac{1}{nh_n}\partial_\beta^2U_n^{(2)}(\beta|\alpha),
\end{align*}
$$J_{a,n}(\alpha):=\{I_{a,n}(\alpha)\ \text{is non-singular}\}, \ J_{b,n}(\bar{\alpha},\beta):=\{I_{b,n}(\beta|\bar{\alpha})\ \text{is non-singular}\},$$
$$
\bar{I}_{a,n}(\alpha):=
\begin{cases}
I_{a,n}^{-1}(\alpha)&(\omega\in J_{a,n}(\alpha)),\\
E_{p_1}&(\omega\in J_{a,n}^C(\alpha)),
\end{cases}
\quad
\bar{I}_{b,n}(\beta|\bar{\alpha}):=
\begin{cases}
I_{b,n}^{-1}(\beta|\bar{\alpha})&(\omega\in J_{b,n}(\bar{\alpha},\beta)),\\
E_{p_2}&(\omega\in J_{b,n}^C(\bar{\alpha},\beta)).
\end{cases}
$$
We define the likelihood ratio, Wald and Rao type test statistics $\Lambda_n^{(i)}, W_n^{(i)},\ R_n^{(i)}\ (i=1,2)$ as follows:
\begin{align*}
\Lambda_n^{(1)}&:=-2(U_n^{(1)}(\tilde{\alpha}_n)-U_n^{(1)}(\hat{\alpha}_n)),\quad
\Lambda_n^{(2)}:=-2(U_n^{(2)}(\tilde{\beta}_n|\hat{\alpha}_n)-U_n^{(2)}(\hat{\beta}_n|\hat{\alpha}_n)),\\
W_n^{(1)}&:=n(\hat{\alpha}_n-\tilde{\alpha}_n)^\top I_{a,n}(\hat{\alpha}_n)(\hat{\alpha}_n-\tilde{\alpha}_n),\quad
W_n^{(2)}:=nh_n(\hat{\beta}_n-\tilde{\beta}_n)^\top I_{b,n}(\hat{\beta}_n|\hat{\alpha}_n)(\hat{\beta}_n-\tilde{\beta}_n),\\
R_n^{(1)}&:=n^{-1}(\partial_{\alpha}U_n^{(1)}(\tilde{\alpha}_n))^\top \bar{I}_{a,n}(\hat{\alpha}_n)\partial_{\alpha}U_n^{(1)}(\tilde{\alpha}_n),\\
R_n^{(2)}&:=(nh_n)^{-1}(\partial_{\beta}U_n^{(2)}(\tilde{\beta}_n|\hat{\alpha}_n))^\top \bar{I}_{b,n}(\hat{\beta}_n|\hat{\alpha}_n)\partial_{\beta}U_n^{(2)}(\tilde{\beta}_n|\hat{\alpha}_n),
\end{align*}
where $\Lambda_n^{(1)}, W_n^{(1)}$ and $R_n^{(1)}$ are used in testing $\alpha$ and $\Lambda_n^{(2)}, W_n^{(2)}$ and $R_n^{(2)}$ are used in testing $\beta$.

The following gives asymptotic distributions of these test statistics under null hypothesis.
\begin{theorem}\label{thm:1}
Assume $\mathbf{A1\mathchar`- A7}$. If \ $h_n\to0,\ nh_n\to\infty$  and\ $nh_n^2\to0$, then
\begin{enumerate}
\item[$(\mathrm{i})$] $\Lambda_n^{(1)}\overset{d}{\to}\chi_{r_1}^2\quad(\text{under}\  H_0^{(1)}),\quad \Lambda_n^{(2)}\overset{d}{\to}\chi_{r_2}^2\quad(\text{under}\  H_0^{(2)}),$
\item[$(\mathrm{ii})$] $W_n^{(1)}\overset{d}{\to}\chi_{r_1}^2\quad(\text{under}\  H_0^{(1)}),\quad W_n^{(2)}\overset{d}{\to}\chi_{r_2}^2\quad(\text{under}\  H_0^{(2)}),$
\item[$(\mathrm{iii})$] $R_n^{(1)}\overset{d}{\to}\chi_{r_1}^2\quad(\text{under}\  H_0^{(1)}),\quad R_n^{(2)}\overset{d}{\to}\chi_{r_2}^2\quad(\text{under}\  H_0^{(2)}).$
\end{enumerate}
\end{theorem}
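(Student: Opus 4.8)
The plan is to reduce each of the six statistics to a quadratic form in an asymptotically Gaussian vector and to identify that form as a chi-squared variable, exploiting the classical asymptotic equivalence of the likelihood-ratio, Wald and score statistics. The diffusion block $\alpha$ and the drift block $\beta$ are handled separately, since they carry the different rates $\sqrt{n}$ and $\sqrt{nh_n}$ and since $I(\theta_0;\theta_0)$ is block-diagonal. Throughout I rely on the results of Uchida and Yoshida: consistency and asymptotic normality of the adaptive estimator, the martingale central limit theorems $n^{-1/2}\partial_\alpha U_n^{(1)}(\alpha_0)\overset{d}{\to}N(0,I_a)$ and $(nh_n)^{-1/2}\partial_\beta U_n^{(2)}(\beta_0|\alpha_0)\overset{d}{\to}N(0,I_b)$ (writing $I_a:=I_a(\alpha_0;\alpha_0)$, $I_b:=I_b(\theta_0;\theta_0)$), and the laws of large numbers $I_{a,n}(\hat\alpha_n)\overset{P}{\to}I_a$, $I_{b,n}(\hat\beta_n|\hat\alpha_n)\overset{P}{\to}I_b$ obtained from uniform convergence together with consistency; here the assumption $nh_n^2\to0$ is what makes the Euler discretization bias negligible, so that these normalized scores behave like the martingale objects to which the central limit theorem applies.

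\emph{The diffusion block.} Write $\alpha=(\alpha^{(1)},\alpha^{(2)})$ with $\alpha^{(1)}=(\alpha_1,\dots,\alpha_{r_1})$, and partition the score and $I_a$ conformably. Under $H_0^{(1)}$ the true value has $\alpha_0^{(1)}=0$, so both $\hat\alpha_n$ and $\tilde\alpha_n=(0,\tilde\alpha_n^{(2)})$ are consistent. Writing $\Delta_n:=n^{-1/2}\partial_\alpha U_n^{(1)}(\alpha_0)$, a Taylor expansion of the score gives $\sqrt{n}(\hat\alpha_n-\alpha_0)=I_a^{-1}\Delta_n+o_P(1)$, while the restricted first-order condition $\partial_{\alpha^{(2)}}U_n^{(1)}(\tilde\alpha_n)=0$ yields $\sqrt{n}(\tilde\alpha_n-\alpha_0)^{(2)}=(I_a^{22})^{-1}\Delta_n^{(2)}+o_P(1)$. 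Combining them, $\sqrt{n}(\hat\alpha_n-\tilde\alpha_n)=\bigl(E_{r_1},-I_a^{12}(I_a^{22})^{-1}\bigr)^{\top}\sqrt{n}\,\hat\alpha_n^{(1)}+o_P(1)$, where $\sqrt{n}\,\hat\alpha_n^{(1)}\overset{d}{\to}N(0,(I_a^{11\cdot2})^{-1})$ and $I_a^{11\cdot2}:=I_a^{11}-I_a^{12}(I_a^{22})^{-1}I_a^{21}$ is the Schur complement. Substituting into $W_n^{(1)}$ and using $I_{a,n}(\hat\alpha_n)\overset{P}{\to}I_a$ collapses the middle matrix to $I_a^{11\cdot2}$, so $W_n^{(1)}\overset{d}{\to}\chi_{r_1}^2$. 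For $R_n^{(1)}$ the restricted condition kills the lower block of the score, and a one-term expansion shows $n^{-1/2}\partial_{\alpha^{(1)}}U_n^{(1)}(\tilde\alpha_n)\overset{d}{\to}N(0,I_a^{11\cdot2})$ while the relevant block of $\bar I_{a,n}(\hat\alpha_n)$ tends to $(I_a^{11\cdot2})^{-1}$, giving $\chi_{r_1}^2$ again. Finally a second-order expansion of $U_n^{(1)}$ about $\hat\alpha_n$, using $\partial_\alpha U_n^{(1)}(\hat\alpha_n)=0$, gives $\Lambda_n^{(1)}=W_n^{(1)}+o_P(1)$. Hence all three $\alpha$-statistics converge to $\chi_{r_1}^2$.

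\emph{The drift block and the plug-in.} The argument for $\beta$ is identical after replacing $n$ by $nh_n$, with the one extra ingredient that the statistics use the estimated $\hat\alpha_n$ rather than $\alpha_0$. The point is that this substitution is asymptotically harmless: by a Taylor expansion in $\alpha$, $(nh_n)^{-1/2}\partial_\beta U_n^{(2)}(\beta_0|\hat\alpha_n)=(nh_n)^{-1/2}\partial_\beta U_n^{(2)}(\beta_0|\alpha_0)+(nh_n)^{-1/2}\partial_\alpha\partial_\beta U_n^{(2)}(\beta_0|\bar\alpha_n)(\hat\alpha_n-\alpha_0)$, and since the mixed $\alpha,\beta$ derivative normalized by $nh_n$ converges to the vanishing off-diagonal block of $I(\theta_0;\theta_0)$, one has $(nh_n)^{-1}\partial_\alpha\partial_\beta U_n^{(2)}=o_P(1)$; together with $\hat\alpha_n-\alpha_0=O_P(n^{-1/2})$ the correction is $o_P(1)$. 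The same orthogonality plus the faster $\sqrt{n}$-rate of $\hat\alpha_n$ renders the plug-in negligible in $I_{b,n}(\hat\beta_n|\hat\alpha_n)$ and in $\tilde\beta_n$. With the plug-in controlled, the Schur-complement computation above carries over verbatim and yields $\Lambda_n^{(2)},W_n^{(2)},R_n^{(2)}\overset{d}{\to}\chi_{r_2}^2$.

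The main obstacle is precisely this plug-in step: one must verify that the adaptive substitution of $\hat\alpha_n$ for $\alpha_0$ leaves the limiting law of the drift statistics unchanged. This is where the block-diagonal (orthogonal) structure of $I(\theta_0;\theta_0)$ and the separation of the two convergence rates are essential; everything else is the classical quadratic-form algebra, once the foundational limit theorems of Uchida and Yoshida are invoked.
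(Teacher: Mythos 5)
Your proposal is correct and follows essentially the same route as the paper: an asymptotic linear representation of $\sqrt{n}(\hat\alpha_n-\tilde\alpha_n)$ (resp. $\sqrt{nh_n}(\hat\beta_n-\tilde\beta_n)$) in terms of the score (the content of the paper's Lemma 1), reduction of all three statistics to a single quadratic form, identification of its law by a rank-$r_1$ (resp. rank-$r_2$) computation, and control of the plug-in of $\hat\alpha_n$ through the vanishing normalized mixed derivative $\partial_\alpha\partial_\beta U_n^{(2)}$. The only cosmetic difference is that you carry out the quadratic-form algebra with explicit block partitions and the Schur complement $I_a^{11\cdot2}$, whereas the paper packages the same computation into the matrix $H_1$ built from $I_{a,3}^{-1}$ and a projection-matrix trace argument.
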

\begin{remark}\label{test.method}
When we perform the parametric tests for $\theta=(\alpha,\beta)$, the testing procedure is as follows:
\begin{enumerate}
\item[$(\mathrm{1})$] test $H_0^{(1)}$ v.s. $H_1^{(1)}$;
\item[$(\mathrm{2})$] test $H_0^{(2)}$ v.s. $H_1^{(2)}$ regardless of the result of $(\mathrm{1})$.
\end{enumerate}
This testing method provides four interpretations, which gives more information about the parameters than the joint test by Kitagawa and Uchida \cite{Kitagawa_2014}.   
\end{remark}
\subsection{Consistency of tests}\label{sec3.2}
\ 

Next we consider under alternatives $H_1^{(1)}$ or $H_1^{(2)}$. To emphasize that we consider under alternatives, $\theta_1^*=(\alpha_1^*,\beta_1^*)$ denotes the true parameter under alternatives. We define an optimal parameter $\theta^*=(\alpha^*,\beta^*)\in\tilde{\Theta}$ as
$$
\alpha^*:=\argsup_{\alpha\in\tilde{\Theta}_\alpha}\bar{U}_1(\alpha;\alpha_1^*),\quad
\beta^*:=\argsup_{\alpha\in\tilde{\Theta}_\beta}\bar{U}_2(\alpha^*_1,\beta;\beta_1^*),
$$
where
\begin{align*}
\bar{U}_1(\alpha;\alpha_1^*)&:=-\frac{1}{2}\int\left(\mathrm{tr}\left(S(x,\alpha_1^*)S^{-1}(x,\alpha)\right)+\log\det S(x,\alpha)\right)\mu_{\theta_1^*}(dx),\\
\bar{U}_2(\alpha,\beta;\beta_1^*)&:=-\frac{1}{2}\int S^{-1}(x,\alpha)\left[\left(b(x,\beta_1^*)-b(x,\beta)\right)^{\otimes 2}\right]\mu_{\theta_1^*}(dx).
\end{align*}
\begin{remark}\label{rem:1}
It always true that $\alpha_1^*\neq \alpha^*$. Since $\alpha_1^*$ is the true value under $H_1^{(1)}$, there exists $i \in\{1,\ldots,r_1\}$
such that $\alpha_i\neq 0$. On the other hand,  $\alpha^*$ is the optimal value under $H_1^{(0)}$, hence $\alpha_i= 0\quad(i=1,\ldots, r_1)$. Similarly, it always holds $\beta_1^*\neq \beta^*$.
\end{remark}
We assume the following conditions.
\begin{enumerate}
\item[\bf B1] \begin{enumerate}[(a)]
\item For all $\varepsilon>0$, 
$$
\sup_{\{\alpha\in\tilde{\Theta}_\alpha;|\alpha-\alpha^*|\geq\varepsilon\}}
\left(\bar{U}_1(\alpha;\alpha_1^*)-\bar{U}_1(\alpha^*;\alpha_1^*)\right)<0.
$$

\item For all $\varepsilon>0$,
$$
\sup_{\{\beta \in\tilde{\Theta}_\beta;|\beta-\beta^*|\geq\varepsilon\}}
\left(\bar{U}_2(\alpha^*_1, \beta;\beta_1^*)-\bar{U}_2(\alpha^*_1, \beta^*;\beta_1^*)\right)
<0.
$$
\end{enumerate}
\item[\bf B2]  \begin{enumerate}[(a)]
\item For all $\alpha\in\Theta_\alpha$,\ $I_a(\alpha;\alpha_1^*)$ is non-singular.

\item For all $\beta\in\Theta_\beta$,\ $I_b(\alpha_1^*,\beta;\theta_1^*)$ is non-singular.
\end{enumerate}
\end{enumerate}

The following gives the consistency of tests. 
\begin{theorem} \label{thm:2}
Assume $\mathbf{A1\mathchar`-A7}$ and $\mathbf{B1}$. If $h_n\to0$ and$\ nh_n\to\infty$, then for all $\varepsilon\in (0,1)$,
\begin{enumerate}

\item[$(\mathrm{i})$] $P(\Lambda_n^{(1)}\geq \chi_{r_1,\varepsilon}^2 ){\to}\ 1\quad(\text{under}\ H_1^{(1)}),\quad P(\Lambda_n^{(2)}\geq \chi_{r_2,\varepsilon}^2 ){\to}\ 1\quad(\text{under}\ H_1^{(2)}),$
\item[$(\mathrm{ii})$] $P(W_n^{(1)}\geq \chi_{r_1,\varepsilon}^2 ){\to}\ 1\quad(\text{under}\ H_1^{(1)}),\quad P(W_n^{(2)}\geq \chi_{r_2,\varepsilon}^2 ){\to}\ 1\quad(\text{under}\ H_1^{(2)}),$
\item[$(\mathrm{iii})$] $P(R_n^{(1)}\geq \chi_{r_1,\varepsilon}^2 ){\to}\ 1\quad(\text{under}\ H_1^{(1)}\ \text{and}\ \mathbf{B2}$-$(\mathrm{a})),$

\hspace{-0.45cm}$P(R_n^{(2)}\geq \chi_{r_2,\varepsilon}^2 ){\to}\ 1\quad(\text{under}\ H_1^{(2)}\  \text{and}\  \mathbf{B2}$-$(\mathrm{b})),$
\end{enumerate}
where $\chi_{p,\varepsilon}^2$ denotes  the upper $\varepsilon$ point of chi-squared distribution with $p$ degrees of freedom.
\end{theorem}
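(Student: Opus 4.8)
The plan is to prove that each statistic, after division by its natural normalizing rate ($n$ for the statistics labelled $(1)$ and $nh_n$ for those labelled $(2)$), converges in probability to a strictly positive constant. Since these rates diverge, the statistics themselves tend to $+\infty$ in probability, and because the critical value $\chi^2_{p,\varepsilon}$ is a fixed finite number this gives $P(\cdot\ge\chi^2_{p,\varepsilon})\to1$ at once. The first ingredient I would record is the consistency of the estimators under the alternative, which needs only $h_n\to0$ and $nh_n\to\infty$. Under $H_1^{(1)}$ the uniform law of large numbers gives $\frac1n U_n^{(1)}(\alpha)\overset{P}{\to}\bar U_1(\alpha;\alpha_1^*)$ uniformly in $\alpha$; since the integrand $\mathrm{tr}(S(x,\alpha_1^*)S^{-1}(x,\alpha))+\log\det S(x,\alpha)$ is minimized exactly at $S(x,\alpha)=S(x,\alpha_1^*)$, the identifiability \textbf{A6} makes $\alpha_1^*$ the unique maximizer of $\bar U_1(\cdot;\alpha_1^*)$ over $\Theta_\alpha$, whence $\hat\alpha_n\overset{P}{\to}\alpha_1^*$ by the argmax theorem, while \textbf{B1}-(a) yields the unique constrained maximizer and hence $\tilde\alpha_n\overset{P}{\to}\alpha^*$. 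The same scheme, using $\frac1{nh_n}U_n^{(2)}(\beta|\hat\alpha_n)\overset{P}{\to}\bar U_2(\alpha_1^*,\beta;\beta_1^*)$ (the plugged-in $\hat\alpha_n\to\alpha_1^*$ being absorbed by uniformity in $\alpha$) together with \textbf{A6} and \textbf{B1}-(b), gives $\hat\beta_n\overset{P}{\to}\beta_1^*$ and $\tilde\beta_n\overset{P}{\to}\beta^*$ under $H_1^{(2)}$. Remark \ref{rem:1} supplies the crucial strict separations $\alpha^*\ne\alpha_1^*$ and $\beta^*\ne\beta_1^*$.

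For the likelihood ratio statistics I would simply take limits: $\frac1n\Lambda_n^{(1)}=-\frac2n\bigl(U_n^{(1)}(\tilde\alpha_n)-U_n^{(1)}(\hat\alpha_n)\bigr)\overset{P}{\to}-2\bigl(\bar U_1(\alpha^*;\alpha_1^*)-\bar U_1(\alpha_1^*;\alpha_1^*)\bigr)$, which is strictly positive because $\alpha_1^*$ is the unique maximizer and $\alpha^*\ne\alpha_1^*$; thus $\Lambda_n^{(1)}\overset{P}{\to}+\infty$, and $\Lambda_n^{(2)}$ is handled identically with $\bar U_2(\alpha_1^*,\cdot;\beta_1^*)$, whose maximal value is $0$. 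For the Wald statistics I would write $\frac1n W_n^{(1)}=(\hat\alpha_n-\tilde\alpha_n)^\top I_{a,n}(\hat\alpha_n)(\hat\alpha_n-\tilde\alpha_n)\overset{P}{\to}(\alpha_1^*-\alpha^*)^\top I_a(\alpha_1^*;\alpha_1^*)(\alpha_1^*-\alpha^*)$, using $I_{a,n}(\hat\alpha_n)\overset{P}{\to}I_a(\alpha_1^*;\alpha_1^*)$; the limiting matrix is a positive semidefinite Fisher-type matrix that is non-singular at the true value by \textbf{A7}, hence positive definite, so the quadratic form is strictly positive since $\alpha_1^*-\alpha^*\ne0$. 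The statistic $W_n^{(2)}$ is the same with $nh_n$, $I_{b,n}$ and $I_b(\alpha_1^*,\beta_1^*;\theta_1^*)$, the latter being positive definite by the vanishing of its first term at $\beta=\beta_1^*$ together with \textbf{A7}.

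The Rao statistics are the delicate case and the place where \textbf{B2} is used. Here I would show $\frac1n R_n^{(1)}=\bigl(\frac1n\partial_\alpha U_n^{(1)}(\tilde\alpha_n)\bigr)^\top\bar I_{a,n}(\hat\alpha_n)\bigl(\frac1n\partial_\alpha U_n^{(1)}(\tilde\alpha_n)\bigr)$ converges to $(G^*)^\top I_a^{-1}(\alpha_1^*;\alpha_1^*)G^*$, where $G^*:=\partial_\alpha\bar U_1(\alpha^*;\alpha_1^*)$: on the event $J_{a,n}(\hat\alpha_n)$, whose probability tends to $1$, $\bar I_{a,n}(\hat\alpha_n)=I_{a,n}^{-1}(\hat\alpha_n)\overset{P}{\to}I_a^{-1}(\alpha_1^*;\alpha_1^*)$ (positive definite), and the uniform law of large numbers for the score gives $\frac1n\partial_\alpha U_n^{(1)}(\tilde\alpha_n)\overset{P}{\to}G^*$. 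The whole argument then reduces to proving $G^*\ne0$, which is the main obstacle. My approach would be to exploit $\partial_\alpha\bar U_1(\alpha_1^*;\alpha_1^*)=0$ and the identity $\partial_\alpha^2\bar U_1(\alpha;\alpha_1^*)=-I_a(\alpha;\alpha_1^*)$, so that the fundamental theorem of calculus along the segment from $\alpha^*$ to $\alpha_1^*$ yields $G^*=\bigl(\int_0^1 I_a(\alpha^*+t(\alpha_1^*-\alpha^*);\alpha_1^*)\,dt\bigr)(\alpha_1^*-\alpha^*)$; combined with $\alpha_1^*-\alpha^*\ne0$, the invertibility supplied by \textbf{B2}-(a) forces $G^*\ne0$. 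The subtle point I expect to have to address carefully is that \textbf{B2}-(a) only gives non-singularity of $I_a(\alpha;\alpha_1^*)$ at each fixed $\alpha$, whereas what is strictly needed is that the averaged matrix above is non-singular; I would resolve this either through the positive semidefinite structure of $I_a$ and a connectedness argument, or equivalently by noting that $G^*=0$ would make $\alpha^*$ a second stationary point of $\bar U_1(\cdot;\alpha_1^*)$, contradicting the uniqueness of its maximizer. The statistic $R_n^{(2)}$ is treated in the same way using $nh_n$, $\bar I_{b,n}$, \textbf{B2}-(b) and $\beta^*\ne\beta_1^*$.
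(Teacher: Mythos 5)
Your proposal is correct and follows essentially the same route as the paper's proof: consistency of $\hat{\theta}_n$ and $\tilde{\theta}_n$ under the alternatives (the paper's Lemma \ref{lem:2}), followed by showing that each statistic divided by $n$ (resp.\ $nh_n$) converges in probability to a strictly positive constant, with the Rao case reduced, exactly as in the paper, to $c_1=\int_0^1 I_a(\alpha^*+u(\alpha_1^*-\alpha^*);\alpha_1^*)\,du\,(\alpha_1^*-\alpha^*)\neq 0$ via $\mathbf{B2}$ (and your remark that pointwise non-singularity does not trivially give non-singularity of the averaged matrix is a fair caveat that the paper itself glosses over). The one imprecision is your claim that $\frac{1}{nh_n}U_n^{(2)}(\beta|\hat{\alpha}_n)$ itself converges to $\bar{U}_2$ --- it diverges, which is why the paper instead works with the difference $L_n(\alpha,\beta)=\frac{1}{nh_n}\bigl(U_n^{(2)}(\beta|\alpha)-U_n^{(2)}(\beta_1^*|\alpha)\bigr)$ and bounds $\Lambda_n^{(2)}/(nh_n)$ from below --- but since both the argmax over $\beta$ and $\Lambda_n^{(2)}$ involve only differences of $U_n^{(2)}$ in $\beta$, your conclusions are unaffected.
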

\subsection{Asymptotic distribution under local alternatives}
\ 

For $u_\alpha\in\mathbb{R}^{p_1}$ and $u_\beta\in\mathbb{R}^{p_2}$, we consider the following tests.
$$
\begin{cases}
H_{0,n}^{(1)}:\ \alpha=\alpha_0,\\
H_{1,n}^{(1)}:\ \alpha=\alpha_{1,n}^*:=\alpha_0+\frac{u_\alpha}{\sqrt{n}},
\end{cases}
\quad
\begin{cases}
H_{0,n}^{(2)}:\ \beta=\beta_0,\\
H_{1,n}^{(2)}:\ \beta=\beta_{1,n}^*:=\beta_0+\frac{u_\beta}{\sqrt{nh_n}}.
\end{cases}
$$
Let $\theta_{1,n}^*$ denote the true parameter under local alternatives. We assume the following condition.
\begin{enumerate}

\item[\bf C1] $P_{\theta_{1,n}^*}$ is contiguous with respect to $P_{\theta_0}$. That is,\ for sequence of sets $A_n$,
$$
\lim_{n\to\infty}P_{\theta_0}(A_n)=0\ \Rightarrow  \lim_{n\to\infty}P_{\theta_{1,n}^*}(A_n)=0
$$
\end{enumerate}
\begin{remark}\label{rem:gobet}
$(\mathrm{i})$ Let $\bar{\alpha}$ denote the true parameter of $\alpha$ under $H_{0,n}^{(2)}$ or $H_{1,n}^{(2)}$, and $\bar{\beta}$ denote the true parameter of $\beta$ under $H_{0,n}^{(1)}$ or $H_{1,n}^{(1)}$. 
By using these symbols, {\bf C1} is applied as follows:
\begin{enumerate}
\item[$(\mathrm{I})$]\  
If we test for $\alpha$, then  $\theta_0=(\alpha_0,\bar{\beta}),\ \theta_{1,n}^*=(\alpha_{1,n}^*,\bar{\beta}).$
\item[$(\mathrm{II})$]\ 
If we test for $\beta$, then  $\theta_0=(\bar{\alpha},\beta_0),\ \theta_{1,n}^*=(\bar{\alpha},\beta_{1,n}^*).$
\end{enumerate}
$(\mathrm{ii})$\ A sufficient condition of {\bf C1} is local asymptotic normality (LAN), and a sufficient condition of LAN is 
as follows: 
$\mathbf {A1\mathchar`-A5}$ and
\begin{enumerate}

\item[$(\mathrm{I})$]\  There exists $C >0$ such that for all $x\in\mathbb{R}^d$ and $(\alpha,\beta)\in \Theta$,\\
$|b(x,\beta)| \leq C(1+|x|),\quad|\partial_{i}b(x,\beta)|+|a(x,\alpha)|+|\partial_{i}a(x,\alpha)| \leq C.$
\item[$(\mathrm{II})$]\ There exists $c_0>0$ and $K >0$ such that for all $(x,\beta) \in \mathbb{R}^d\times \Theta_\beta$,\\
$b(x,\beta)^\top x \leq -c_0|x|^2 + K.$
\end{enumerate}
For details of the relation between LAN and {\bf C1},  see,  for example, van der Vaart\cite{van_der_Vaart}. For sufficient conditions of LAN, see Gobet\cite{Gobet_2002}.
\end{remark}

\begin{theorem} \label{thm:3} 
Assume $\mathbf{A1\mathchar`-A7}$ and $\mathbf{C1}$. If $h_n\to0,\ nh_n\to\infty$ and $nh_n^2\to0$, then
\begin{enumerate}

\item [$(\mathrm{i})$] $\Lambda_n^{(1)}\overset{d}{\to}\chi_{p_1}^2(u_\alpha^\top I_a(\alpha_0;\alpha_0)u_\alpha)\quad(\text{under}\ H_{1,n}^{(1)}),\quad
\Lambda_n^{(2)}\overset{d}{\to}\chi_{p_2}^2(u_\beta^\top I_b(\theta_0;\theta_0)u_\beta)\quad(\text{under}\ H_{1,n}^{(2)}),$
\item [$(\mathrm{ii})$] $W_n^{(1)}\overset{d}{\to}\chi_{p_1}^2(u_\alpha^\top I_a(\alpha_0;\alpha_0)u_\alpha)\quad(\text{under}\ H_{1,n}^{(1)}),\quad
W_n^{(2)}\overset{d}{\to}\chi_{p_2}^2(u_\beta^\top I_b(\theta_0;\theta_0)u_\beta)\quad(\text{under}\ H_{1,n}^{(2)}),$
\item [$(\mathrm{iii})$] $R_n^{(1)}\overset{d}{\to}\chi_{p_1}^2(u_\alpha^\top I_a(\alpha_0;\alpha_0)u_\alpha)\quad(\text{under}\ H_{1,n}^{(1)}),\quad
R_n^{(2)}\overset{d}{\to}\chi_{p_2}^2(u_\beta^\top I_b(\theta_0;\theta_0)u_\beta)\quad(\text{under}\ H_{1,n}^{(2)}),$
\end{enumerate}
where $\chi_{p}^2(c)$ denotes the 
non-central chi-squared distribution with $p$ degrees of freedom and noncentrality parameter $c$.
\end{theorem}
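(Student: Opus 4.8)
The plan is to show that, within each parameter block, the three statistics are all asymptotically equal to a single quadratic form in the normalized score evaluated at the null value, and that this score picks up a deterministic mean shift under the local alternative; the noncentral chi-squared then arises exactly as in the passage from a centered to a noncentered Gaussian inside that quadratic form. I will treat the $\alpha$-block, the $\beta$-block being identical after replacing $\sqrt n$ by $\sqrt{nh_n}$, $U_n^{(1)}$ by $U_n^{(2)}(\cdot|\hat\alpha_n)$, and $I_a$ by $I_b$. Write $\Delta_{a,n}:=n^{-1/2}\partial_\alpha U_n^{(1)}(\alpha_0)$ for the normalized score at the null value; since here $r_1=p_1$ the constrained estimator reduces to the single admissible point $\tilde\alpha_n=\alpha_0$.

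First I would record the representations already used in the proof of Theorem~\ref{thm:1}. A second-order Taylor expansion of $U_n^{(1)}$ about $\hat\alpha_n$ together with the first-order condition $\partial_\alpha U_n^{(1)}(\hat\alpha_n)=0$, the convergence $I_{a,n}(\hat\alpha_n)\overset{P}{\to}I_a(\alpha_0;\alpha_0)$, and the linearization $\sqrt n(\hat\alpha_n-\alpha_0)=I_a(\alpha_0;\alpha_0)^{-1}\Delta_{a,n}+o_{P_{\theta_0}}(1)$ yield
\[
\Lambda_n^{(1)}=W_n^{(1)}+o_{P_{\theta_0}}(1),\qquad W_n^{(1)}=\Delta_{a,n}^\top I_a(\alpha_0;\alpha_0)^{-1}\Delta_{a,n}+o_{P_{\theta_0}}(1),
\]
and, since $\tilde\alpha_n=\alpha_0$ and $\bar I_{a,n}(\hat\alpha_n)\overset{P}{\to}I_a(\alpha_0;\alpha_0)^{-1}$, the same common quadratic form is obtained for $R_n^{(1)}$. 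The role of {\bf C1} is now decisive: each error term is $o_{P_{\theta_0}}(1)$, hence by contiguity of $P_{\theta_{1,n}^*}$ with respect to $P_{\theta_0}$ it is also $o_{P_{\theta_{1,n}^*}}(1)$, so under the local alternative all three statistics share the limit of $\Delta_{a,n}^\top I_a(\alpha_0;\alpha_0)^{-1}\Delta_{a,n}$.

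The main step is to identify the law of $\Delta_{a,n}$ under $P_{\theta_{1,n}^*}$. Expanding the score about the data-generating value $\alpha_{1,n}^*=\alpha_0+u_\alpha/\sqrt n$ gives, for an intermediate point $\bar\alpha_n$,
\[
\Delta_{a,n}=n^{-1/2}\partial_\alpha U_n^{(1)}(\alpha_{1,n}^*)-\Big(n^{-1}\partial^2_\alpha U_n^{(1)}(\bar\alpha_n)\Big)u_\alpha.
\]
Under $P_{\theta_{1,n}^*}$ the first term is the normalized score at the true value and converges in law to $N(0,I_a(\alpha_0;\alpha_0))$ by the martingale central limit theorem, while $-n^{-1}\partial^2_\alpha U_n^{(1)}(\bar\alpha_n)=I_{a,n}(\bar\alpha_n)\overset{P}{\to}I_a(\alpha_0;\alpha_0)$, the latter convergence being transferred from $P_{\theta_0}$ via {\bf C1}. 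Hence $\Delta_{a,n}\overset{d}{\to}N\big(I_a(\alpha_0;\alpha_0)u_\alpha,\ I_a(\alpha_0;\alpha_0)\big)$; the same mean shift is what Le~Cam's third lemma produces from the joint null limit of $\Delta_{a,n}$ and the log-likelihood ratio. Writing $I_a:=I_a(\alpha_0;\alpha_0)$ and $G:=I_a^{-1/2}\Delta_{a,n}$, we get $G\overset{d}{\to}N(I_a^{1/2}u_\alpha,E_{p_1})$, so $\Delta_{a,n}^\top I_a^{-1}\Delta_{a,n}=\|G\|^2\overset{d}{\to}\chi^2_{p_1}(u_\alpha^\top I_a u_\alpha)$, which is the asserted limit for $\Lambda_n^{(1)},W_n^{(1)},R_n^{(1)}$.

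The $\beta$-block is handled the same way with $\Delta_{b,n}:=(nh_n)^{-1/2}\partial_\beta U_n^{(2)}(\beta_0|\hat\alpha_n)$ and rate $\sqrt{nh_n}$; because $I(\theta_0;\theta_0)$ is block diagonal, the plug-in of $\hat\alpha_n$ enters only through an asymptotically vanishing cross term (negligible under $nh_n^2\to0$), so the $\alpha$-estimation does not perturb the $\beta$-limit and the noncentrality is $u_\beta^\top I_b(\theta_0;\theta_0)u_\beta$. I expect the main obstacle to be this mean-shift step: justifying the central limit theorem for the score along the moving parameter sequence $\theta_{1,n}^*$ (which also requires the locally uniform behavior of the invariant-measure functionals as $\alpha_{1,n}^*\to\alpha_0$) and controlling the $\hat\alpha_n$-plug-in remainder in $\Delta_{b,n}$ uniformly in the drift, since {\bf C1} transfers only convergence-in-probability statements and the weak convergence of the score at the true value must be argued directly, or else routed through the LAN expansion and Le~Cam's third lemma as indicated in Remark~\ref{rem:gobet}.
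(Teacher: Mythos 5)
Your proposal is correct and follows essentially the same route as the paper: the paper's Lemma \ref{lem:3} is exactly your key claim that the normalized score at the null value converges to $N(I_a(\alpha_0;\alpha_0)u_\alpha,\ I_a(\alpha_0;\alpha_0))$ (resp.\ $N(I_b(\theta_0;\theta_0)u_\beta,\ I_b(\theta_0;\theta_0))$) under the local alternative, after which $\mathbf{C1}$ transfers all $o_P$ remainders and the continuous mapping theorem yields the common noncentral quadratic form, with the $\hat\alpha_n$-plug-in in the $\beta$-block killed by $\tilde I_{ab,n}\overset{P}{\to}0$. The only organizational difference is that you obtain the mean shift by Taylor-expanding the score about the moving true value $\alpha_{1,n}^*$ and invoking a CLT there, whereas the paper keeps the score at $\alpha_0$ and computes its conditional moments under $P_{\theta_{1,n}^*}$ directly via It\^o--Taylor expansions and the Hall--Heyde martingale CLT; both reduce to the same computation.
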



\section{Examples and simulations}\label{sec4}
\subsection{Model 1}\label{sec:model1}
\ 

We consider the following 1-dimensional Ornstein-Uhlenbeck process:
\begin{equation}\label{simu_model1}
\begin{cases}
dX_t = -(X_t-\beta)dt + \alpha dW_t,\\
X_0 = 1.0.
\end{cases}
\end{equation}
We simulate the asymptotic performance of the three types of test statistics: likelihood ratio type, Wald type and Rao's score type. In model \eqref{simu_model1}, we deal with 
the following hypothesis tests:
\begin{equation}\label{simu_test1}
\begin{cases}
H_{0}^{(1)}:\ \alpha=1.0,\\
H_{1}^{(1)}:\ \alpha\neq 1.0,
\end{cases}
\quad
\begin{cases}
H_{0}^{(2)}:\ \beta=2.0,\\
H_{1}^{(2)}:\ \beta\neq 2.0.
\end{cases}
\end{equation}
These tests derive the four kinds of results as follows:
\begin{enumerate}[\bf{Case} 1.]
\item Neither $\alpha$ nor $\beta$ is rejected;
\item $\alpha$ is not rejected, but $\beta$ is rejected;
\item $\alpha$ is rejected, but $\beta$ is not rejected;
\item Both $\alpha$ and $\beta$ are rejected;
\end{enumerate}
We choice the true parameter $\theta^*=(\alpha^*,\beta^*)$ from $\{(1.0,2.0),\ (1.0,2.5),\ (1.1,2.0),\ (1.1,2.5)\}$, 
where 
$\theta^*$ corresponds to the true parameter of Cases 1-4.
Let $n$ be fix and $h_n=n^{-2/3}$, which satisfies the conditions $nh_n=n^{1/3}\to \infty$ and $nh_n^2=n^{-1/3}\to 0$ as $n\to\infty$. In this simulation, we consider the cases of $n=10^4, 10^5$ and $10^6$. 
Let the significance level 
denote $\varepsilon=0.05$ and each test is rejected when the realization of each test statistic is greater than $\chi_{1,0.05}^2$. The simulation is repeated 1000 times.

Tables \ref{model1_table1-1}-\ref{model1_table1-4} 
show 
the number of counts of 
Cases 1-4 selected by the tests \eqref{simu_test1}, 
where the true parameter $\theta^*$ 
corresponds to each case. 
In all of the tables, 
the true case is most often selected 
as $n$ increases. In Table 1, the percent of misidentification is about 10 percent, which is 
the sum of the significance levels of the considering tests. 
Figures \ref{model1_hist1_1}-\ref{model1_ecdf3} 
are simulation results of 
the histograms and the empirical distributions 
of the three types of test statistics 
in Theorem $1$. Theoretically, these test statistics 
converge in distribution to $\chi_1^2$, and 
we see  from all of the figures that 
these test statistics have good behavior.
Tables \ref{model1_size_table1-1}-\ref{model1_size_table3} 
show 
the empirical sizes of the three test statistics when the null hypothesis is true.
\begin{small}
\begin{table}[htbp]
\begin{center}
\caption{Results of test statistics in {\bf{Case 1}}:\ $(\alpha^*,\beta^*)=(1.0,2.0).$}
\label{model1_table1-1}
\begin{tabular}{ccc|c|cccc}
$n$&$h_n$&$nh_n$&Test type&{\bf{Case 1}}&Case 2&Case 3&Case 4\\
\hline\hline
&&&Likelihood					&890&53&53&4\\
$10^4$&$2.15\times10^{-3}$&21.5&Wald&891&54&52&3\\
&&&Rao							&891&52&52&5\\
\hline
&&&Likelihood					&897&50&50&3\\
$10^5$&$4.64\times10^{-4}$&46.4&Wald&897&50&50&3\\
&&&Rao							&897&50&50&3\\
\hline
&&&Likelihood					&900&49&47&4\\
$10^6$&$1.00\times10^{-4}$&100&Wald&900&49&47&4\\
&&&Rao							&901&49&46&4\\
\end{tabular}
\end{center}
\end{table}

\begin{table}[htbp]
\begin{center}
\caption{Results of test statistics in {\bf{Case 2}}:\ $(\alpha^*,\beta^*)=(1.0,2.5).$}
\label{model1_table1-2}
\begin{tabular}{ccc|c|cccc}
$n$&$h_n$&$nh_n$&Test type&Case 1&{\bf{Case 2}}&Case 3&Case 4\\
\hline\hline
&&&Likelihood					&352&590&23&35\\
$10^4$&$2.15\times10^{-3}$&21.5&Wald&353&591&22&34\\
&&&Rao							&352&590&23&35\\
\hline
&&&Likelihood					&74&881&3&42\\
$10^5$&$4.64\times10^{-4}$&46.4&Wald&74&883&3&40\\
&&&Rao							&74&880&3&43\\
\hline
&&&Likelihood					&1&950&0&49\\
$10^6$&$1.00\times10^{-4}$&100&Wald&1&950&0&49\\
&&&Rao							&1&950&0&49\\
\end{tabular}
\end{center}
\end{table}

\begin{table}[htbp]
\begin{center}
\caption{Results of test statistics in {\bf{Case 3}}:\ $(\alpha^*,\beta^*)=(1.1,2.0).$}
\label{model1_table1-3}
\begin{tabular}{ccc|c|cccc}
$n$&$h_n$&$nh_n$&Test type&Case 1&Case 2&{\bf{Case 3}}&Case 4\\
\hline\hline
&&&Likelihood					&0&0&956&44\\
$10^4$&$2.15\times10^{-3}$&21.5&Wald&0&0&956&44\\
&&&Rao							&0&0&956&44\\
\hline
&&&Likelihood					&0&0&954&46\\
$10^5$&$4.64\times10^{-4}$&46.4&Wald&0&0&954&46\\
&&&Rao							&0&0&954&46\\
\hline
&&&Likelihood					&0&0&955&45\\
$10^6$&$1.00\times10^{-4}$&100&Wald&0&0&955&45\\
&&&Rao							&0&0&955&45\\
\end{tabular}
\end{center}
\end{table}

\begin{table}[htbp]
\begin{center}
\caption{Results of test statistics in {\bf{Case 4}}:\ $(\alpha^*,\beta^*)=(1.1,2.5).$}
\label{model1_table1-4}
\begin{tabular}{ccc|c|cccc}
$n$&$h_n$&$nh_n$&Test type&Case 1&Case 2&Case 3&{\bf{Case 4}}\\
\hline\hline
&&&Likelihood					&0&0&425&575\\
$10^4$&$2.15\times10^{-3}$&21.5&Wald&0&0&425&575\\
&&&Rao							&0&0&425&575\\
\hline
&&&Likelihood					&0&0&114&886\\
$10^5$&$4.64\times10^{-4}$&46.4&Wald&0&0&114&886\\
&&&Rao							&0&0&114&886\\
\hline
&&&Likelihood					&0&0&2&998\\
$10^6$&$1.00\times10^{-4}$&100&Wald&0&0&2&998\\
&&&Rao							&0&0&2&998\\
\end{tabular}
\end{center}
\end{table}

\end{small}

\begin{figure}[htbp]
\centering
\includegraphics[height=0.39\vsize]{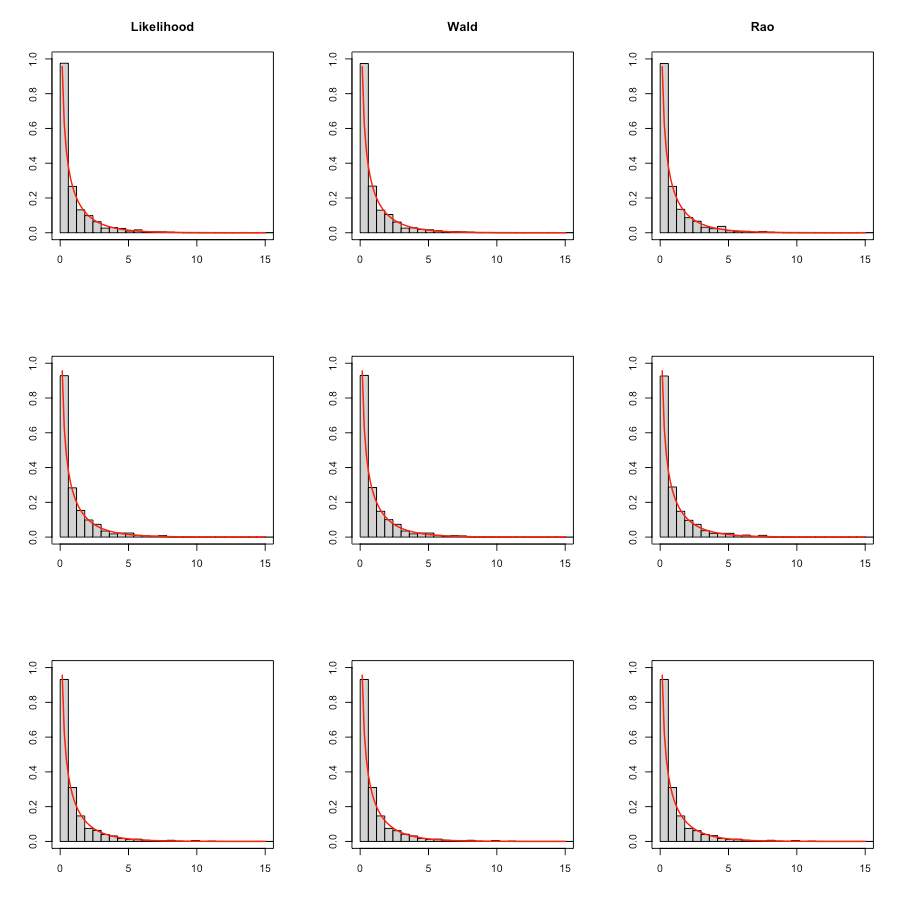}
\caption{Histograms of 
the three types of 
test statistics for $\alpha$ in {\bf{Case 1}}. 
Each row of figures corresponds to the case of 
$n=10^4$ (upper), $10^5$ (middle) and $10^6$ (bottom) 
and each column of figures corresponds to Likelihood ratio (left), Wald (middle) and Rao (right) type test statistics. 
The red line is the probability density function of $\chi_1^2$.}
\label{model1_hist1_1}
\end{figure}

\begin{figure}[htbp]
\centering
\includegraphics[height=0.39\vsize]{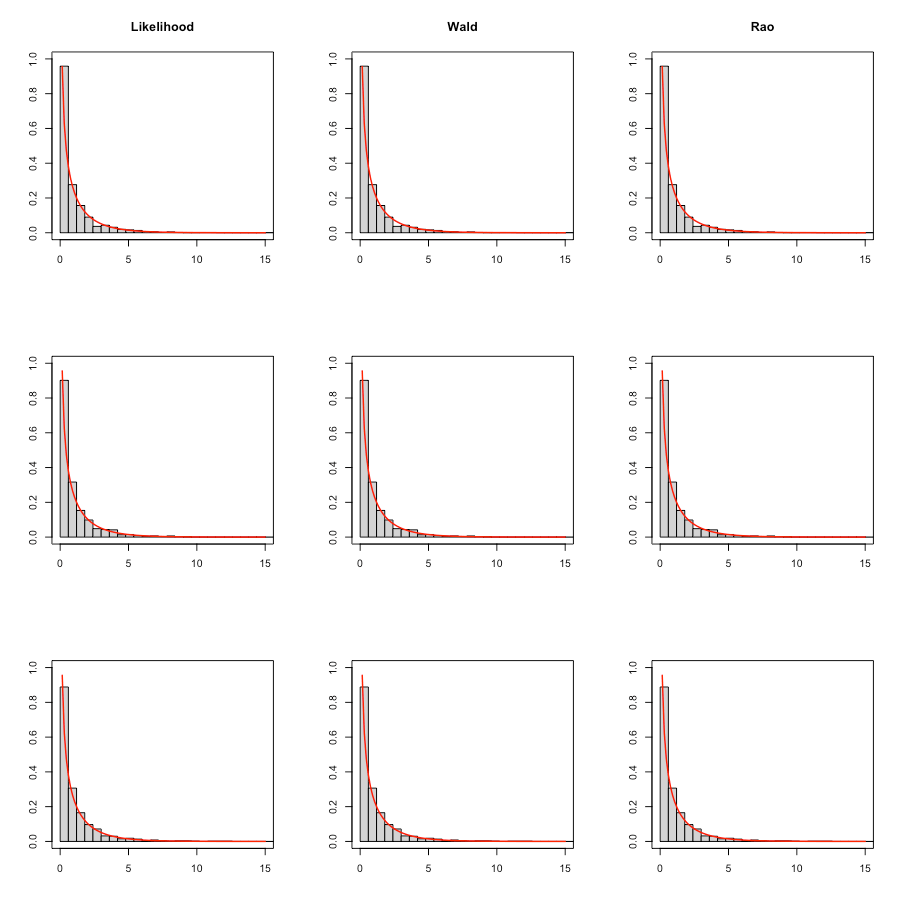}
\caption{Histograms of the three types of test statistics for $\beta$ in {\bf{Case 1}}. 
Each row of figures corresponds to the case of 
$n=10^4$ (upper), $10^5$ (middle) and $10^6$ (bottom) 
and each column of figures corresponds to Likelihood ratio (left), Wald (middle) and Rao (right) type test statistics. 
The red line is the probability density function of $\chi_1^2$.}
\label{model1_hist1_2}
\end{figure}

\begin{figure}[htbp]
\centering
\includegraphics[height=0.39\vsize]{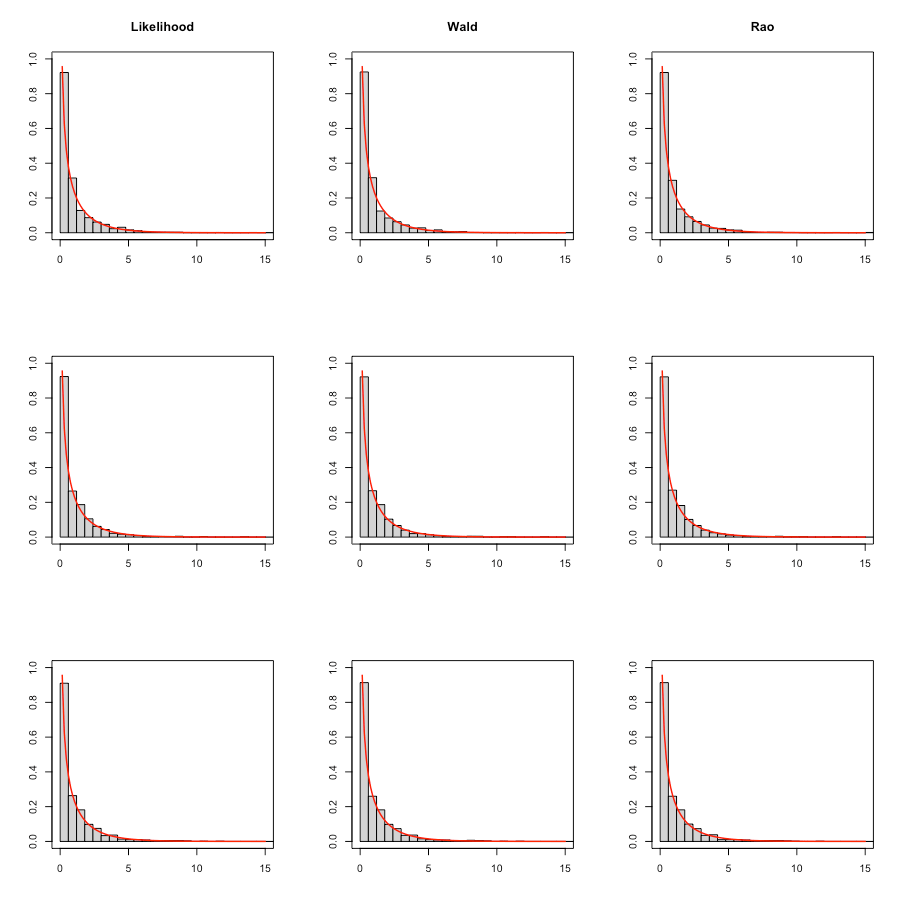}
\caption{Histograms of the three types of test statistics for $\alpha$ in {\bf{Case 2}}.
Each row of figures corresponds to the case of 
$n=10^4$ (upper), $10^5$ (middle) and $10^6$ (bottom) 
and each column of figures corresponds to Likelihood ratio (left), Wald (middle) and Rao (right) type test statistics. 
The red line is the probability density function of $\chi_1^2$.}
\label{model1_hist2}
\end{figure}

\begin{figure}[htbp]
\centering
\includegraphics[height=0.39\vsize]{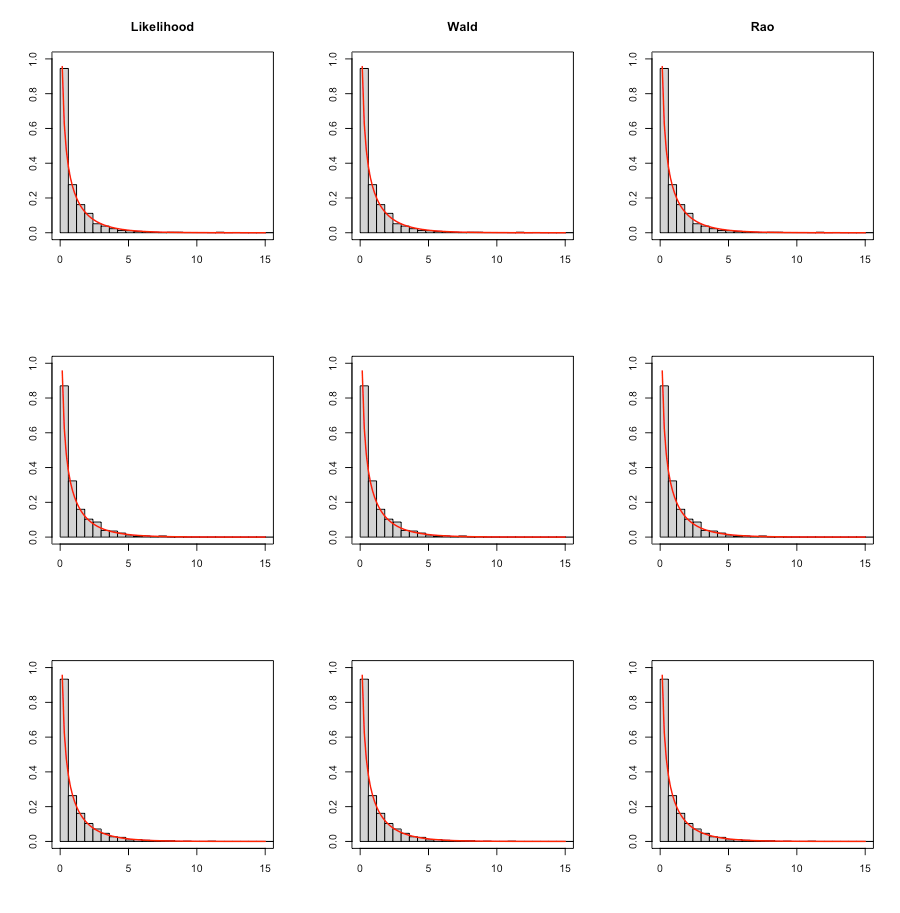}
\caption{Histograms of the three types of test statistics for $\beta$ in {\bf{Case 3}}.
Each row of figures corresponds to the case of 
$n=10^4$ (upper), $10^5$ (middle) and $10^6$ (bottom) 
and each column of figures corresponds to Likelihood ratio (left), Wald (middle) and Rao (right) type test statistics. 
The red line is the probability density function of $\chi_1^2$.}
\label{model1_hist3}
\end{figure}

\begin{figure}[htbp]
\centering
\includegraphics[height=0.39\vsize]{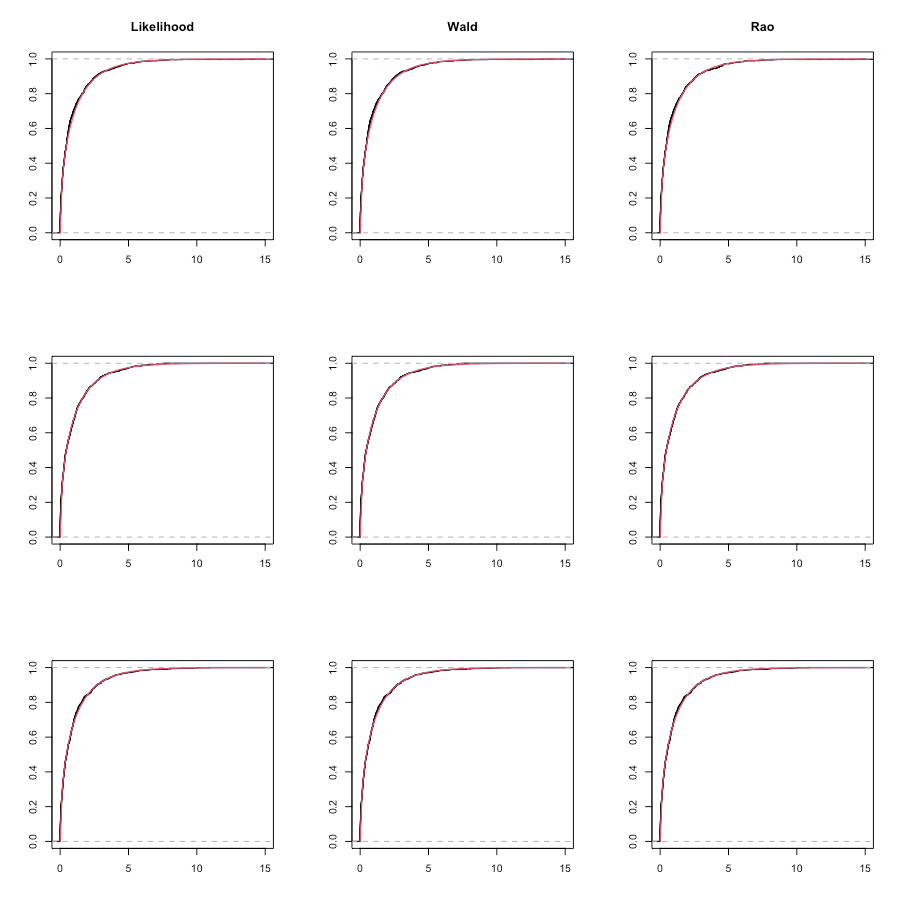}
\caption{Empirical distributions of the three types of test statistics for $\alpha$ in {\bf{Case 1}}. 
Each row of figures corresponds to the case of 
$n=10^4$ (upper), $10^5$ (middle) and $10^6$ (bottom) 
and each column of figures corresponds to Likelihood ratio (left), Wald (middle) and Rao (right) type test statistics. The red line is the cumulative distribution function of $\chi_1^2$.}
\label{model1_ecdf1_1}
\end{figure}

\begin{figure}[htbp]
\centering
\includegraphics[height=0.39\vsize]{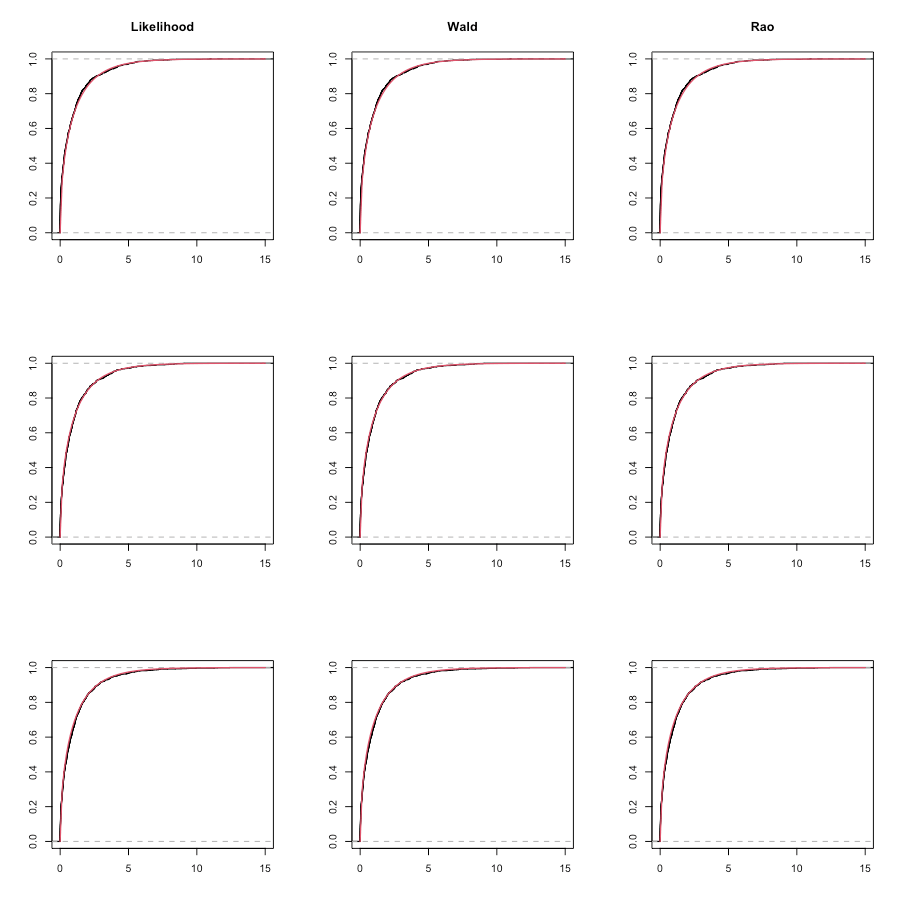}
\caption{Empirical distributions of the three types of test statistics for $\beta$ in {\bf{Case 1}}.  Each row of figures corresponds to the case of 
$n=10^4$ (upper), $10^5$ (middle) and $10^6$ (bottom) 
and each column of figures corresponds to Likelihood ratio (left), Wald (middle) and Rao (right) type test statistics. The red line is the cumulative distribution function of $\chi_1^2$.}
\label{model1_ecdf1_2}
\end{figure}

\begin{figure}[htbp]
\centering
\includegraphics[height=0.39\vsize]{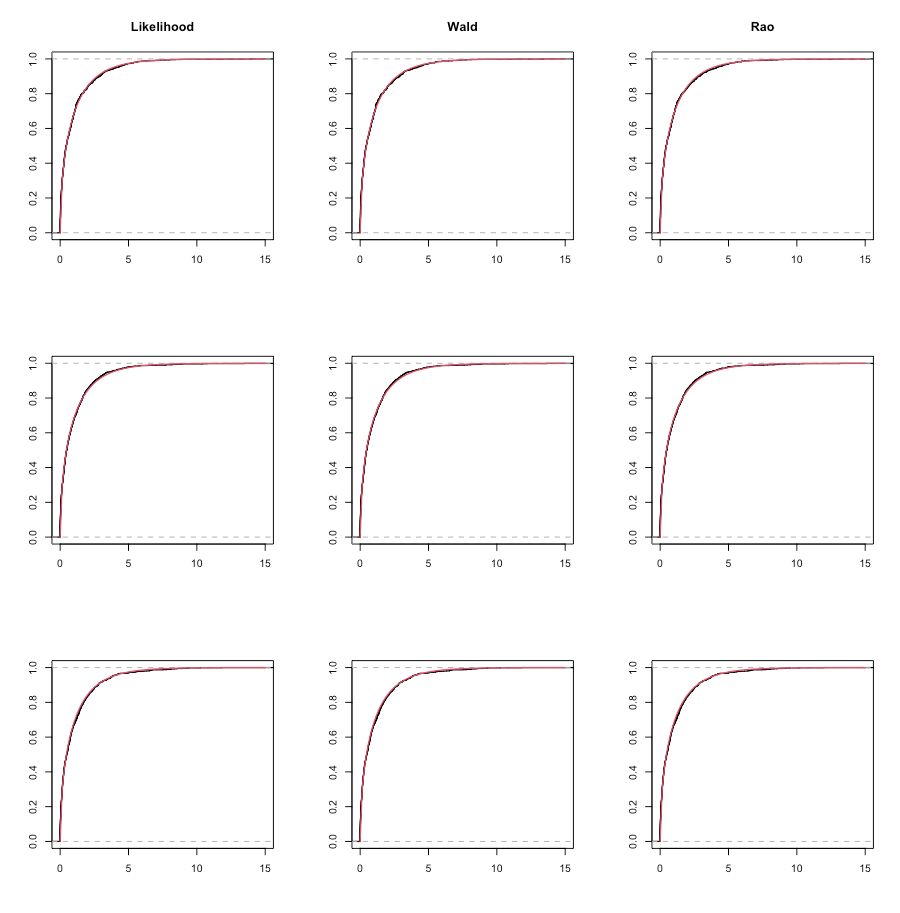}
\caption{Empirical distributions of the three types of test statistics for $\alpha$ in {\bf{Case 2}}.  Each row of figures corresponds to the case of 
$n=10^4$ (upper), $10^5$ (middle) and $10^6$ (bottom) 
and each column of figures corresponds to Likelihood ratio (left), Wald (middle) and Rao (right) type test statistics. The red line is the cumulative distribution function of $\chi_1^2$.}
\label{model1_ecdf2}
\end{figure}

\begin{figure}[htbp]
\centering
\includegraphics[height=0.39\vsize]{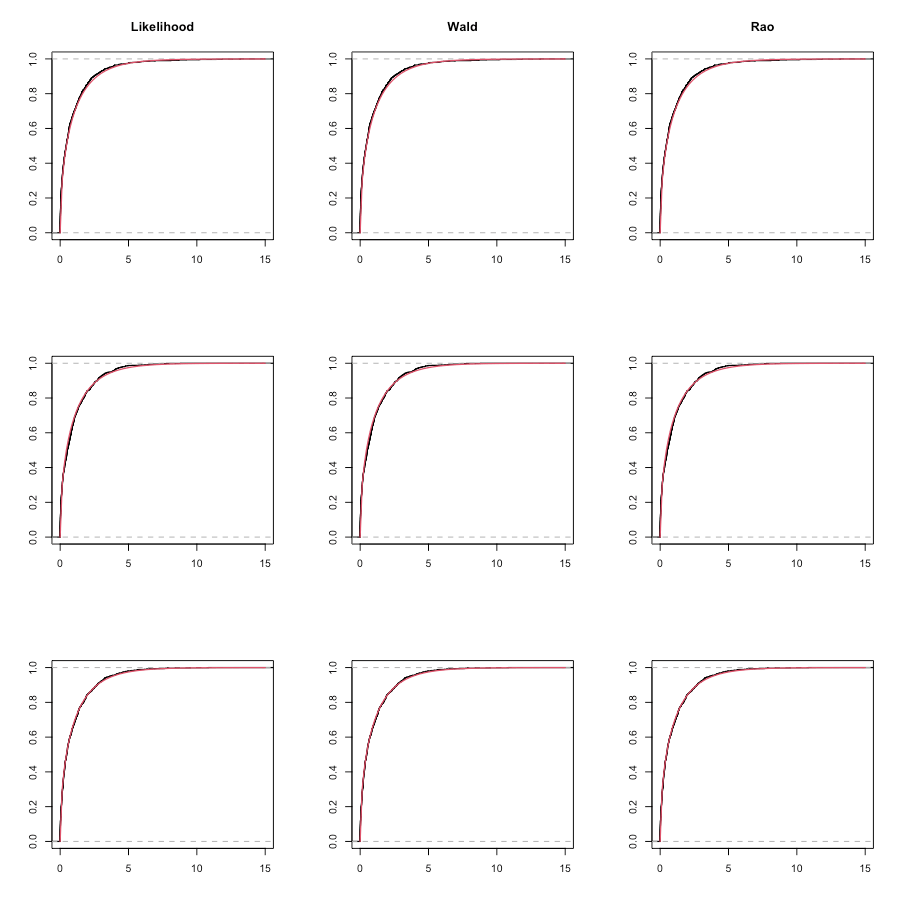}
\caption{Empirical distributions of the three types of test statistics for $\beta$ in {\bf{Case 3}}. Each row of figures corresponds to the case of 
$n=10^4$ (upper), $10^5$ (middle) and $10^6$ (bottom) 
and each column of figures corresponds to Likelihood ratio (left), Wald (middle) and Rao (right) type test statistics. The red line is the cumulative distribution function of $\chi_1^2$.}
\label{model1_ecdf3}
\end{figure}

\begin{table}[htbp]
\begin{minipage}[htbp]{.49\hsize}
\begin{center}
\caption{Empirical sizes of 
the test statistics
for $\alpha$ in $\mathbf{Case\ 1}.$}
\label{model1_size_table1-1}
\begin{tabular}{cc|ccc}
$n$&$nh_n$&Likelihood&Wald&Rao\\
\hline\hline
$10^4$&21.5&0.057&0.055&0.057\\
$10^5$&46.4&0.053&0.053&0.053\\
$10^6$&100&0.051&0.051&0.050
\end{tabular}
\end{center} 
\end{minipage}
\hfill
\begin{minipage}[htbp]{.49\hsize}
\begin{center}
\caption{Empirical sizes of 
the test statistics  for $\beta$ in $\mathbf{Case\ 1}.$}
\label{model1_size_table1-2}
\begin{tabular}{cc|ccc}
$n$&$nh_n$&Likelihood&Wald&Rao\\
\hline\hline
$10^4$&21.5&0.057&0.057&0.057\\
$10^5$&46.4&0.053&0.053&0.053\\
$10^6$&100&0.053&0.053&0.053
\end{tabular}
\end{center}
\end{minipage}
\end{table}

\begin{table}[htbp]
\begin{minipage}[htbp]{.49\hsize}
\begin{center}
\caption{Empirical sizes of 
the test statistics  for $\alpha$ in $\mathbf{Case\ 2}.$}
\label{model1_size_table2}
\begin{tabular}{cc|ccc}
$n$&$nh_n$&Likelihood&Wald&Rao\\
\hline\hline
$10^4$&21.5&0.058&0.056&0.058\\
$10^5$&46.4&0.045&0.043&0.046\\
$10^6$&100&0.049&0.049&0.049
\end{tabular}
\end{center} 
\end{minipage}
\hfill
\begin{minipage}[htbp]{.49\hsize}
\begin{center}
\caption{Empirical sizes of 
the test statistics for $\beta$ in $\mathbf{Case\ 3}.$}
\label{model1_size_table3}
\begin{tabular}{cc|ccc}
$n$&$nh_n$&Likelihood&Wald&Rao\\
\hline\hline
$10^4$&21.5&0.044&0.044&0.044\\
$10^5$&46.4&0.046&0.046&0.046\\
$10^6$&100&0.045&0.045&0.045
\end{tabular}
\end{center}
\end{minipage}
\end{table}

In order to check consistency of the tests, 
we treat the three kinds of $\alpha^*$ ($1.1, 1.01$ and $1.001$) in $\mathbf{Case\ 3}$
and the three kinds of $\beta^*$ ($3.0, 2.5$ and $2.1$) in $\mathbf{Case\ 2}$.
Tables \ref{model1_power_table3} and \ref{model1_power_table2} 
are simulation results of
the empirical powers of the three test statistics. 
For 
$\alpha^* = 1.1$, $1.01$ and $\beta^* = 3.0$, $2.5$, 
the empirical powers increase and tend to 1 as $n$ increases.

\begin{table}[bp]
\begin{minipage}[htbp]{.49\hsize}
\begin{center}
\caption{Empirical sizes of 
the test statistics  for $\alpha$ in $\mathbf{Case\ 3}.$}
\label{model1_power_table3}
\begin{tabular}{c|c|ccc}
$\alpha^*$&$n$&Likelihood&Wald&Rao\\
\hline\hline
&$10^4$&1.000&1.000&1.000\\
1.1&$10^5$&1.000&1.000&1.000\\
&$10^6$&1.000&1.000&1.000\\
\hline
&$10^4$&0.326&0.316&0.349\\
1.01&$10^5$&0.994&0.994&0.994\\
&$10^6$&1.000&1.000&1.000\\
\hline
&$10^4$&0.050&0.047&0.048\\
1.001&$10^5$&0.095&0.093&0.098\\
&$10^6$&0.295&0.294&0.297
\end{tabular}
\end{center} 
\end{minipage}
\hfill
\begin{minipage}[htbp]{.49\hsize}
\begin{center}
\caption{
Empirical sizes of 
the test statistics 
for $\beta$ in $\mathbf{Case\ 2}.$}
\label{model1_power_table2}
\begin{tabular}{c|c|ccc}
$\beta^*$&$nh_n$&Likelihood&Wald&Rao\\
\hline\hline
&$21.5$&0.994&0.994&0.994\\
3.0&$46.4$&1.000&1.000&1.000\\
&$100$&1.000&1.000&1.000\\
\hline
&$21.5$&0.625&0.625&0.625\\
2.5&$46.4$&0.923&0.923&0.923\\
&$100$&0.999&0.999&0.999\\
\hline
&$21.5$&0.091&0.091&0.091\\
2.1&$46.4$&0.098&0.098&0.098\\
&$100$&0.154&0.154&0.154\\
\end{tabular}
\end{center}
\end{minipage}
\end{table}

Next, we consider the asymptotic distributions of the three test statistics under local alternatives. 
The hypothesis tests are
defined as
\begin{equation}\notag
\begin{cases}
H_{0,n}^{(1)}:\ \alpha=\alpha_0=1.0,\\
H_{1,n}^{(1)}:\ \alpha=\alpha_{1,n}^*=\alpha_0+\frac{u_\alpha}{\sqrt{n}},
\end{cases}
\quad
\begin{cases}
H_{0,n}^{(2)}:\ \beta=\beta_0=2.0,\\
H_{1,n}^{(2)}:\ \beta=\beta_{1,n}^*=\beta_0+\frac{u_\beta}{\sqrt{nh_n}},
\end{cases}
\end{equation}
where we set $u_\alpha=5.0$ and $u_\beta=2.0$. In the Ornstein-Uhlenbeck model \eqref{simu_model1}, the invariant distribution is the normal distribution with mean $\beta$ and variance $\frac{\alpha^2}{2}$. Therefore we calculate
$$
I_a(\alpha_0,\alpha_0)=\frac{2.0}{\alpha_0^2}=2.0,\quad I_b(\theta_0,\theta_0)=\frac{1.0}{\alpha^2_0}=1.0,
$$
$$I_a(\alpha_0,\alpha_0)u_\alpha^2=50,\quad I_b(\theta_0,\theta_0)u_\beta^2=4.0.$$

Figures \ref{model1_local_hist1}-\ref{model1_local_ecdf2} show 
the histograms and the empirical distributions 
of 
the three test statistics
under local alternatives. Theoretically, 
the asymptotic distributions of the three test statistics for $\alpha$ are $\chi_1^2(50)$ 
and those for $\beta$ are $\chi_1^2(4)$. 
We see from Figures \ref{model1_local_hist1}-\ref{model1_local_ecdf2}  that
as $n$ increases, the three test statistics for $\alpha$ and $\beta$
are approximately  distributed as $\chi_1^2(50)$  and $\chi_1^2(4)$, respectively.
In particular, the Likelihood ratio type test statistic for $\alpha$ has good performance.
\newpage
\begin{figure}[htbp]
\centering
\includegraphics[height=0.35\vsize]{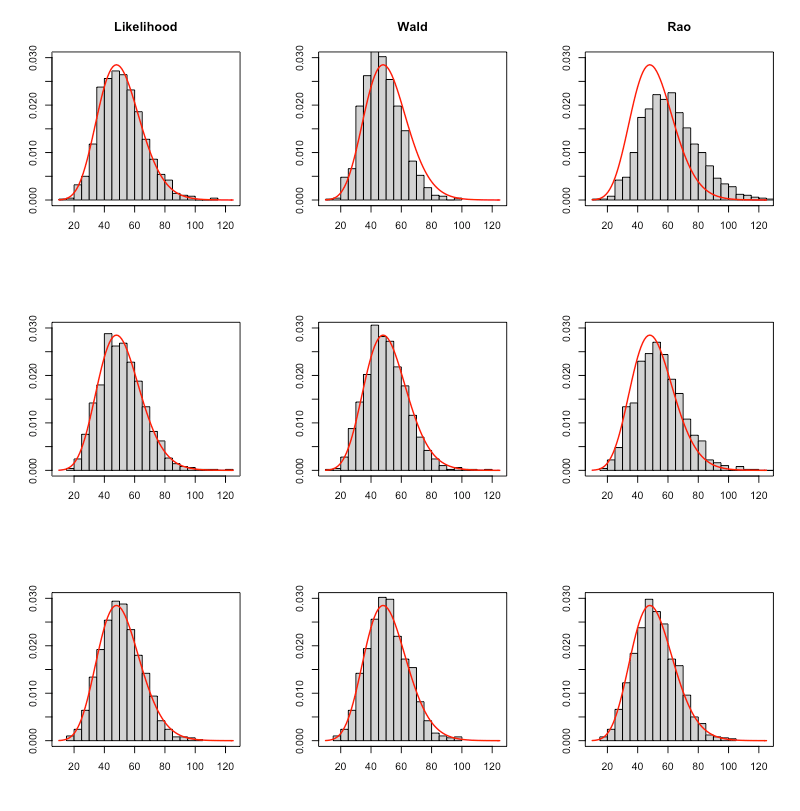}
\caption{Histograms of the three types of test statistics for $\alpha$ under $H_{1,n}^{(1)}$. Each row of figures corresponds to the case of 
$n=10^4$ (upper), $10^5$ (middle) and $10^6$ (bottom) 
and each column of figures corresponds to Likelihood ratio (left), Wald (middle) and Rao (right) type test statistics. The red line is the probability density function of $\chi_2^2(50)$.}
\label{model1_local_hist1}
\end{figure}

\begin{figure}[htbp]
\centering
\includegraphics[height=0.35\vsize]{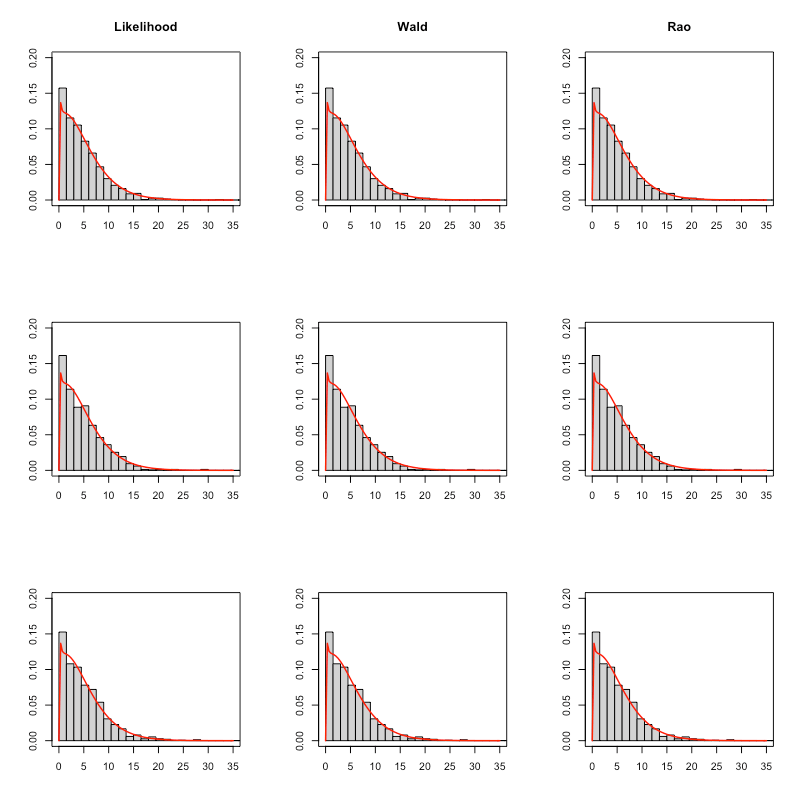}
\caption{Histograms of the three types of test statistics for $\beta$ under $H_{1,n}^{(2)}$. Each row of figures corresponds to the case of 
$n=10^4$ (upper), $10^5$ (middle) and $10^6$ (bottom) 
and each column of figures corresponds to Likelihood ratio (left), Wald (middle) and Rao (right) type test statistics. The red line is the probability density function of $\chi_2^2(4)$.}
\label{model1_local_hist2}
\end{figure}

\begin{figure}[htbp]
\centering
\includegraphics[height=0.36\vsize]{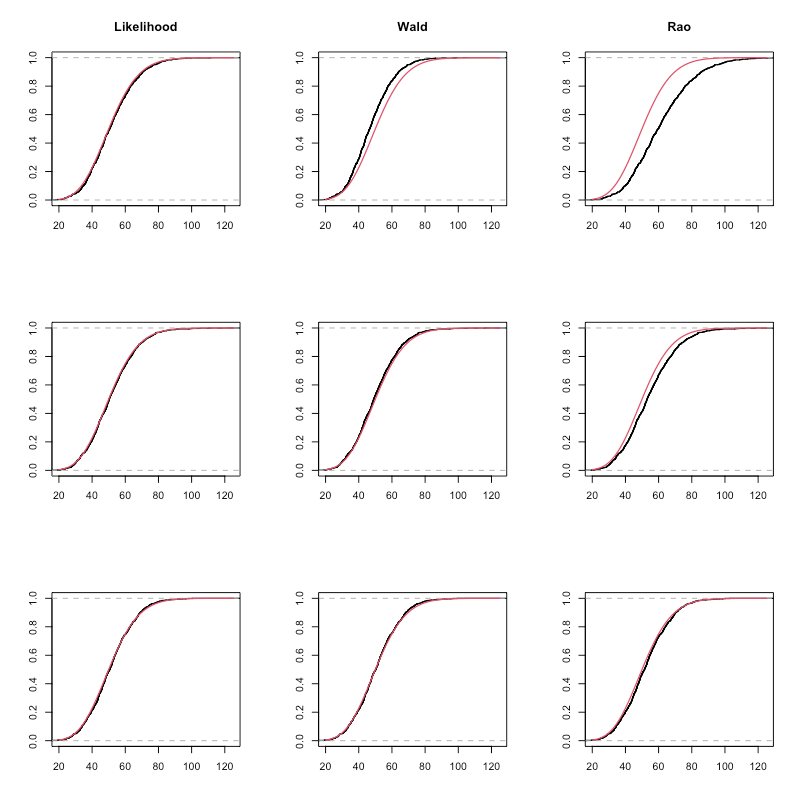}
\caption{Empirical distributions of the three types of test statistics for $\alpha$ under $H_{1,n}^{(1)}$. Each row of figures corresponds to the case of 
$n=10^4$ (upper), $10^5$ (middle) and $10^6$ (bottom) 
and each column of figures corresponds to Likelihood ratio (left), Wald (middle) and Rao (right) type test statistics. The red line is the cumulative distribution function of $\chi_2^2(50)$.}
\label{model1_local_ecdf1}
\end{figure}

\begin{figure}[htbp]
\centering
\includegraphics[height=0.36\vsize]{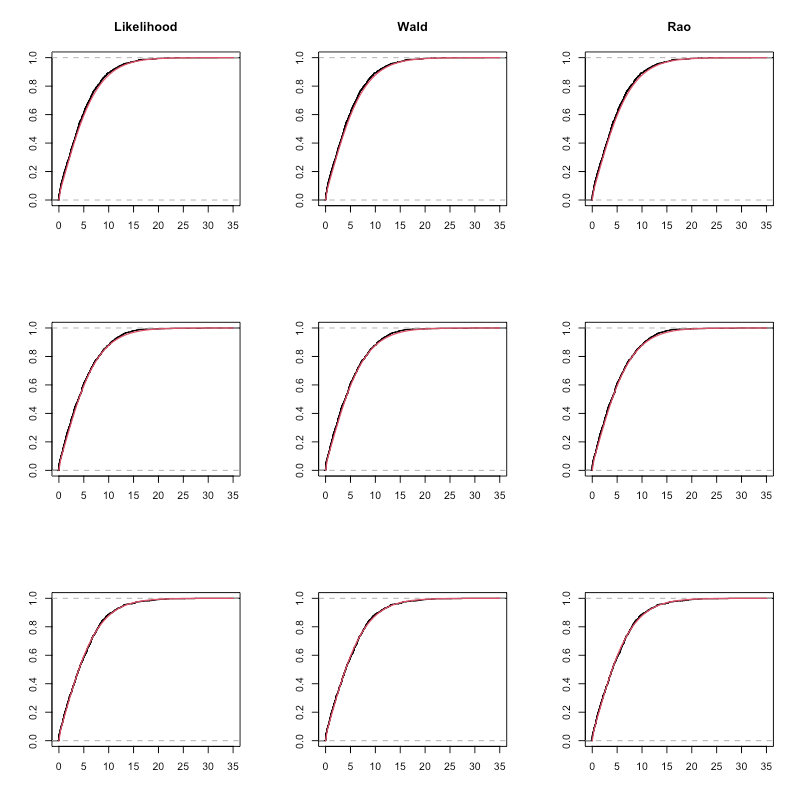}
\caption{Empirical distributions of the three types of test statistics for $\beta$ under $H_{1,n}^{(1)}$. Each row of figures corresponds to the case of 
$n=10^4$ (upper), $10^5$ (middle) and $10^6$ (bottom) 
and each column of figures corresponds to Likelihood ratio (left), Wald (middle) and Rao (right) type test statistics. The red line is the cumulative distribution function of $\chi_2^2(4)$.}
\label{model1_local_ecdf2}
\end{figure}

\newpage
\subsection{Model 2}
\ 

Next, we consider a 1-dimensional diffusion process satisfying the following stochastic differential equation which is more complex than model 1:
\begin{equation}\label{simu_model2}
\begin{cases}
dX_t = -\beta_1(X_t-\beta_2)dt + \left(\alpha_1+\frac{\alpha_2}{1+X_t^2}+\alpha_3\cos^2{X_t}\right)dW_t,\\
X_0 = 1.0.
\end{cases}
\end{equation}
We deal with the following tests:
\begin{equation}\label{simu_test2}
\begin{cases}
H_{0}^{(1)}:\ (\alpha_1,\alpha_2)=(1.0,1.0),\\
H_{1}^{(1)}:\ \text{not}\  H_0^{(1)},
\end{cases}
\quad
\begin{cases}
H_{0}^{(2)}:\ (\beta_1,\beta_2)=(2.0,2.0),\\
H_{1}^{(2)}:\ \text{not}\  H_0^{(2)}.
\end{cases}
\end{equation}
We set the true parameter $\alpha^*_3=0.5$ and choice the true parameter $(\alpha^*_1,\alpha^*_2,\beta^*_1,\beta^*_2)$ from $\{(1.0,1.0, 2.0, 2.0),$ $(1.0, 1.0, 2.5, 2.5),$ $(1.05, 1.05, 2.0, 2.0),$ $(1.05, 1.05, 2.5, 2.5)\}$, which corresponds to the true parameter of Cases 1-4 described in Section \ref{sec:model1}, respectively. Each test is rejected when the realization of each test statistics is greater than $\chi_{2,0.05}^2$, and the other simulation settings are the same 
as in the model (\ref{simu_model1}) of Section 4.1.

Tables \ref{model2_table1-1}-\ref{model2_table1-4} show the number of counts of Cases 1-4 selected by the tests \eqref{simu_test2}, 
where the true parameter $(\alpha^*_1,\alpha^*_2,\beta^*_1,\beta^*_2)$ corresponds to each case. In all of the tables, 
the true case is most often selected as $n$ increases. However, the Rao type test statistic does not have 
good performance in Cases 2 and 4 when $n=10^4$.
Figures \ref{model2_hist1_1}-\ref{model2_ecdf3} are simulation results of the histograms and empirical distributions of the three types of test statistics in Theorem 1. Theoretically, these test statistics converge in distribution to $\chi_2^2$.
Tables \ref{model2_size_table1-1}-\ref{model2_size_table3} show the empirical sizes of the three types of test statistics when null hypothesis is true. From these figures and tables, 
the Likelihood ratio type test statistic has the best behavior of the three types of test statistics.  
\begin{small}
\begin{table}[htbp]
\begin{center}
\caption{Results of test statistics in {\bf{Case 1}}:\ $(\alpha^*_1,\alpha^*_2,\beta^*_1,\beta^*_2)=(1.0,1.0, 2.0, 2.0).$}
\label{model2_table1-1}
\begin{tabular}{ccc|c|cccc}
$n$&$h_n$&$nh_n$&Test type&{\bf{Case 1}}&Case 2&Case 3&Case 4\\
\hline\hline
&&&Likelihood					&896&53&47&4\\
$10^4$&$2.15\times10^{-3}$&21.5&Wald&878&68&48&6\\
&&&Rao							&894&53&49&4\\
\hline
&&&Likelihood					&896&55&47&2\\
$10^5$&$4.64\times10^{-4}$&46.4&Wald&884&68&46&2\\
&&&Rao							&895&56&48&1\\
\hline
&&&Likelihood					&905&41&51&3\\
$10^6$&$1.00\times10^{-4}$&100&Wald&902&44&51&3\\
&&&Rao							&914&32&54&0\\
\end{tabular}
\end{center}
\end{table}

\begin{table}[htbp]
\begin{center}
\caption{Results of test statistics in {\bf{Case 2}}:\ $(\alpha^*_1,\alpha^*_2,\beta^*_1,\beta^*_2)=(1.0, 1.0, 2.5, 2.5).$}
\label{model2_table1-2}
\begin{tabular}{ccc|c|cccc}
$n$&$h_n$&$nh_n$&Test type&Case 1&{\bf{Case 2}}&Case 3&Case 4\\
\hline\hline
&&&Likelihood					&55&888&5&52\\
$10^4$&$2.15\times10^{-3}$&21.5&Wald&37&904&4&55\\
&&&Rao							&325&613&24&38\\
\hline
&&&Likelihood					&0&947&0&53\\
$10^5$&$4.64\times10^{-4}$&46.4&Wald&0&944&0&56\\
&&&Rao							&11&936&0&53\\
\hline
&&&Likelihood					&0&949&0&51\\
$10^6$&$1.00\times10^{-4}$&100&Wald&0&949&0&51\\
&&&Rao							&0&946&0&54\\
\end{tabular}
\end{center}
\end{table}

\begin{table}[htbp]
\begin{center}
\caption{Results of test statistics in {\bf{Case 3}}:\ $(\alpha^*_1,\alpha^*_2,\beta^*_1,\beta^*_2)=(1.05, 1.05, 2.0, 2.0).$}
\label{model2_table1-3}
\begin{tabular}{ccc|c|cccc}
$n$&$h_n$&$nh_n$&Test type&Case 1&Case 2&{\bf{Case 3}}&Case 4\\
\hline\hline
&&&Likelihood					&12&2&926&60\\
$10^4$&$2.15\times10^{-3}$&21.5&Wald&13&2&908&77\\
&&&Rao							&9&4&923&64\\
\hline
&&&Likelihood					&0&0&955&45\\
$10^5$&$4.64\times10^{-4}$&46.4&Wald&0&0&946&54\\
&&&Rao							&0&0&954&46\\
\hline
&&&Likelihood					&0&0&951&49\\
$10^6$&$1.00\times10^{-4}$&100&Wald&0&0&946&54\\
&&&Rao							&0&0&956&44\\
\end{tabular}
\end{center}
\end{table}

\begin{table}[htbp]
\begin{center}
\caption{Results of test statistics in {\bf{Case 4}}:\ $(\alpha^*_1,\alpha^*_2,\beta^*_1,\beta^*_2)=(1.05, 1.05, 2.5, 2.5).$}
\label{model2_table1-4}
\begin{tabular}{ccc|c|cccc}
$n$&$h_n$&$nh_n$&Test type&Case 1&Case 2&Case 3&{\bf{Case 4}}\\
\hline\hline
&&&Likelihood					&2&76&71&851\\
$10^4$&$2.15\times10^{-3}$&21.5&Wald&1&91&44&864\\
&&&Rao							&21&47&412&520\\
\hline
&&&Likelihood					&0&0&0&1000\\
$10^5$&$4.64\times10^{-4}$&46.4&Wald&0&0&0&1000\\
&&&Rao							&0&0&12&988\\
\hline
&&&Likelihood					&0&0&0&1000\\
$10^6$&$1.00\times10^{-4}$&100&Wald&0&0&0&1000\\
&&&Rao							&0&0&0&1000\\
\end{tabular}
\end{center}
\end{table}
\end{small}

\begin{figure}[htbp]
\centering
\includegraphics[height=0.39\vsize]{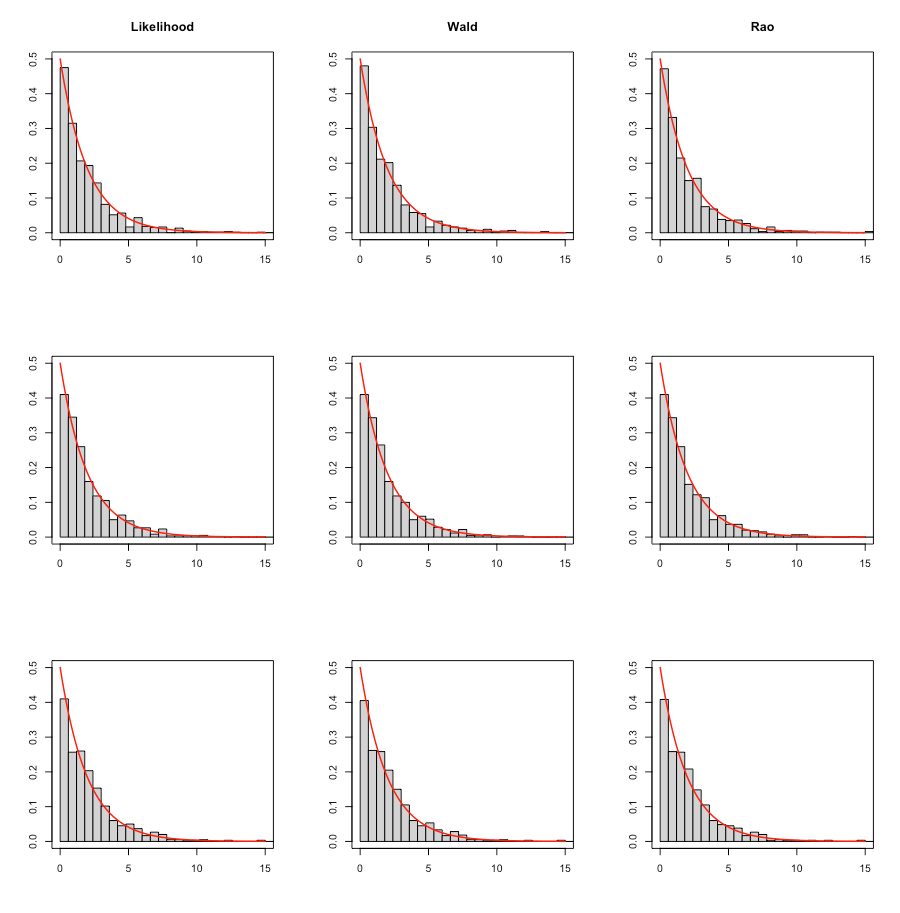}
\caption{Histograms of the three types of test statistics for $\alpha$ in {\bf{Case 1}}.   Each row of figures corresponds to the case of 
$n=10^4$ (upper), $10^5$ (middle) and $10^6$ (bottom) 
and each column of figures corresponds to Likelihood ratio (left), Wald (middle) and Rao (right) type test statistics. The red line is the probability density function of $\chi_2^2$.}
\label{model2_hist1_1}
\end{figure}

\begin{figure}[htbp]
\centering
\includegraphics[height=0.39\vsize]{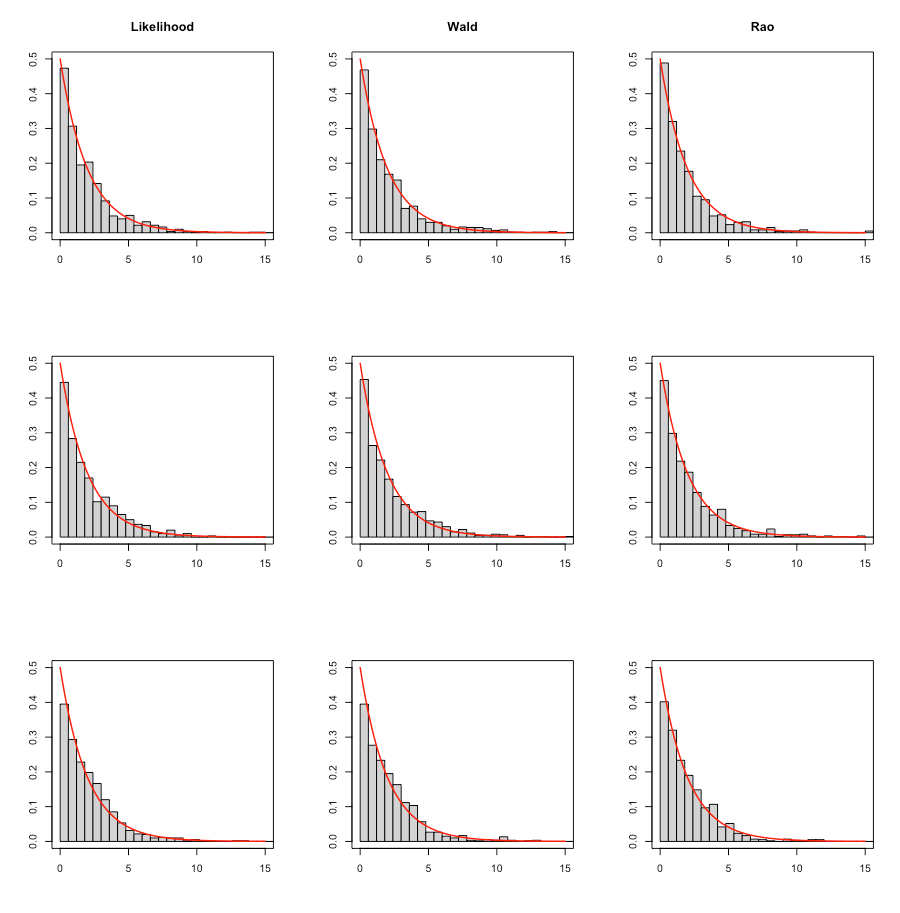}
\caption{Histograms of the three types of test statistics for $\beta$ in {\bf{Case 1}}.  Each row of figures corresponds to the case of 
$n=10^4$ (upper), $10^5$ (middle) and $10^6$ (bottom) 
and each column of figures corresponds to Likelihood ratio (left), Wald (middle) and Rao (right) type test statistics. The red line is the probability density function of $\chi_2^2$.}
\label{model2_hist1_2}
\end{figure}

\begin{figure}[htbp]
\centering
\includegraphics[height=0.39\vsize]{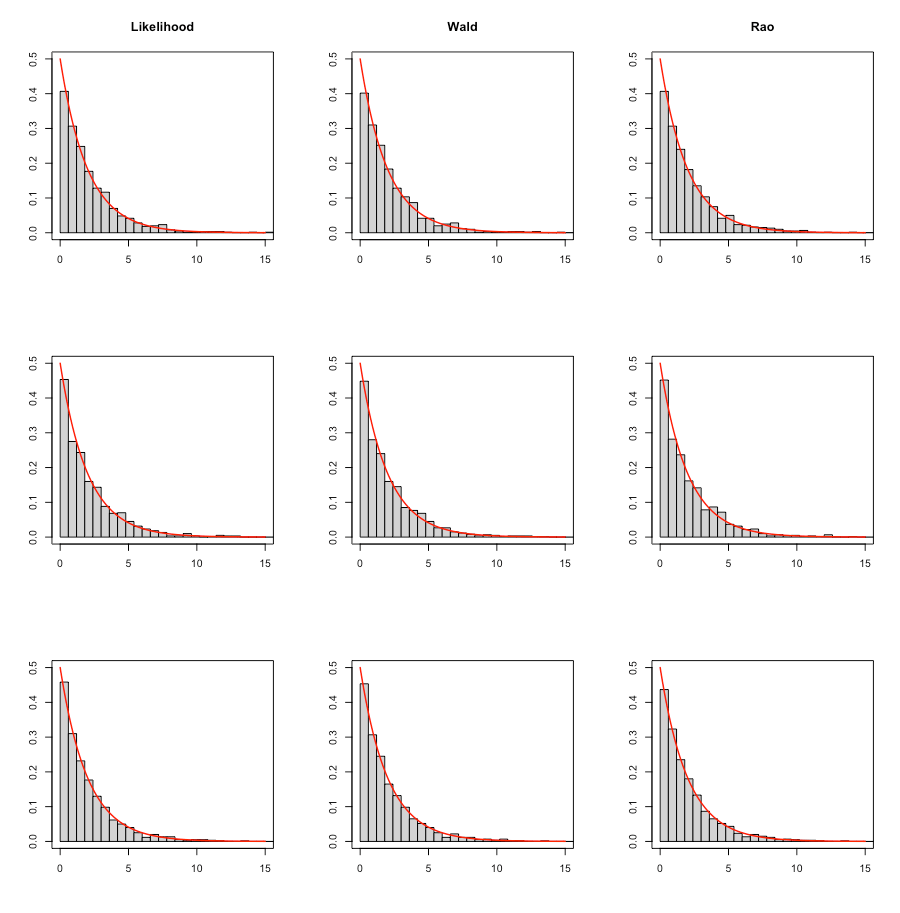}
\caption{Histograms of the three types of test statistics for $\alpha$ in {\bf{Case 2}}. Each row of figures corresponds to the case of 
$n=10^4$ (upper), $10^5$ (middle) and $10^6$ (bottom) 
and each column of figures corresponds to Likelihood ratio (left), Wald (middle) and Rao (right) type test statistics. The red line is the probability density function of $\chi_2^2$.}
\label{model2_hist2}
\end{figure}

\begin{figure}[htbp]
\centering
\includegraphics[height=0.39\vsize]{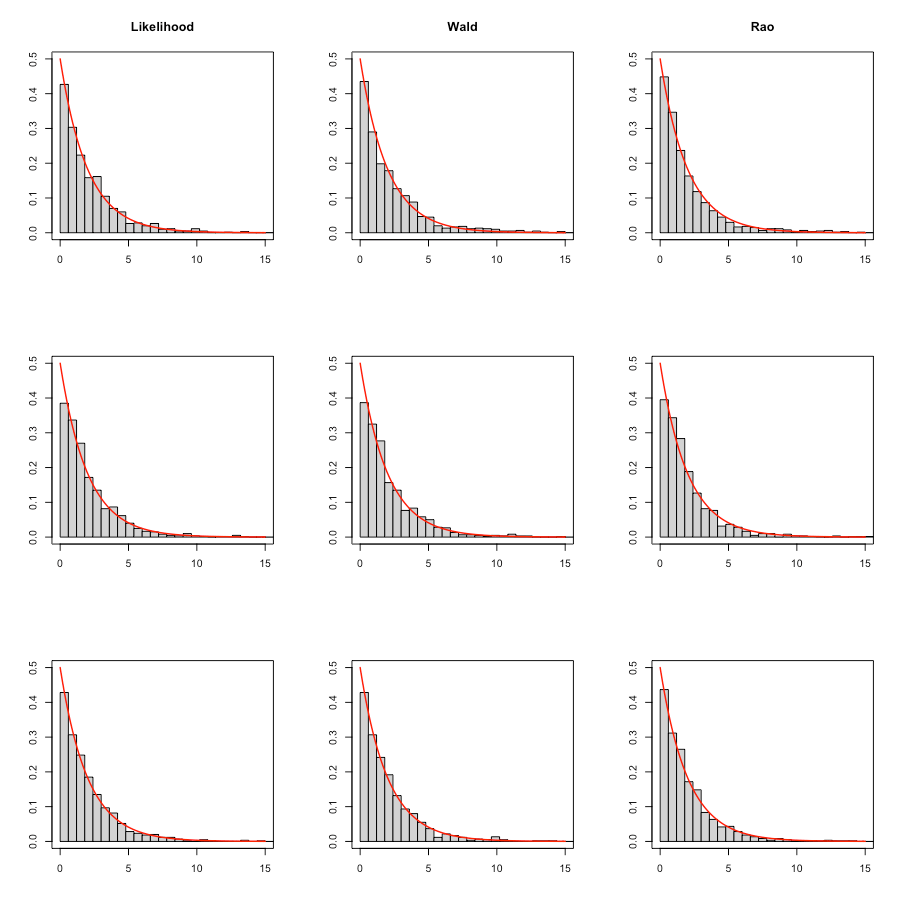}
\caption{Histograms of the three types of test statistics for $\beta$ in {\bf{Case 3}}.  Each row of figures corresponds to the case of 
$n=10^4$ (upper), $10^5$ (middle) and $10^6$ (bottom) 
and each column of figures corresponds to Likelihood ratio (left), Wald (middle) and Rao (right) type test statistics. The red line is the probability density function of $\chi_2^2$.}
\label{model2_hist3}
\end{figure}

\begin{figure}[htbp]
\centering
\includegraphics[height=0.39\vsize]{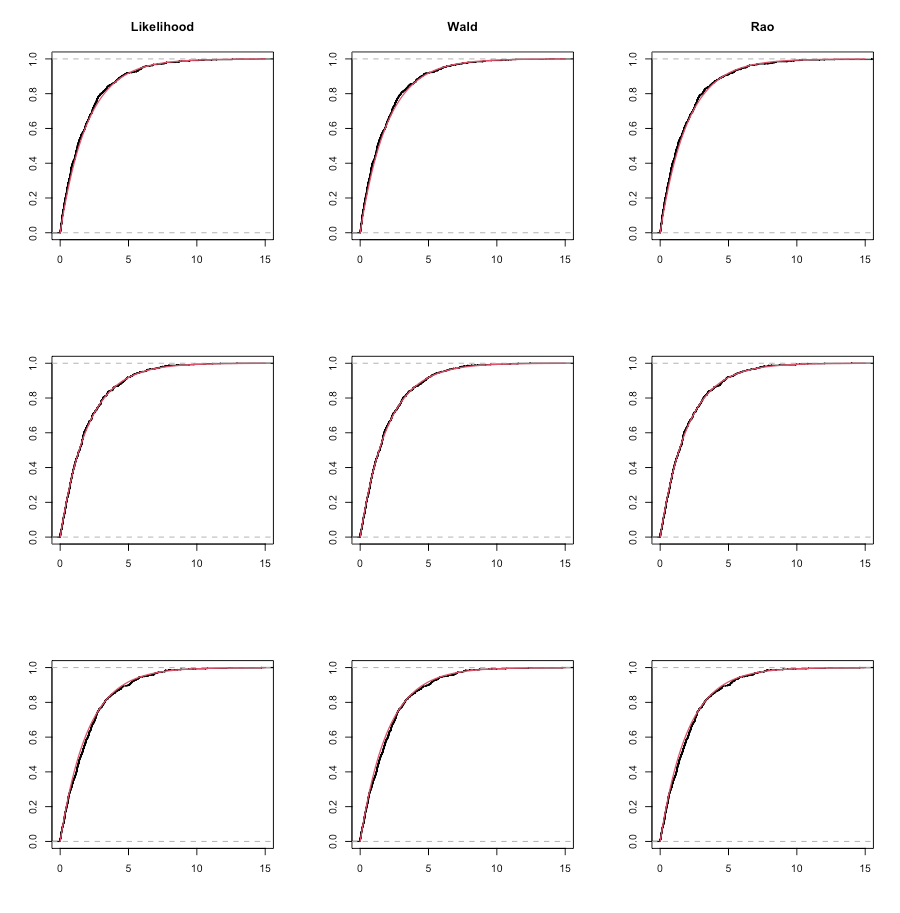}
\caption{Empirical distributions of the three types of test statistics for $\alpha$ in {\bf{Case 1}}.   Each row of figures corresponds to the case of 
$n=10^4$ (upper), $10^5$ (middle) and $10^6$ (bottom) 
and each column of figures corresponds to Likelihood ratio (left), Wald (middle) and Rao (right) type test statistics. The red line is the cumulative distribution function of $\chi_2^2$.}
\label{model2_ecdf1_1}
\end{figure}

\begin{figure}[htbp]
\centering
\includegraphics[height=0.39\vsize]{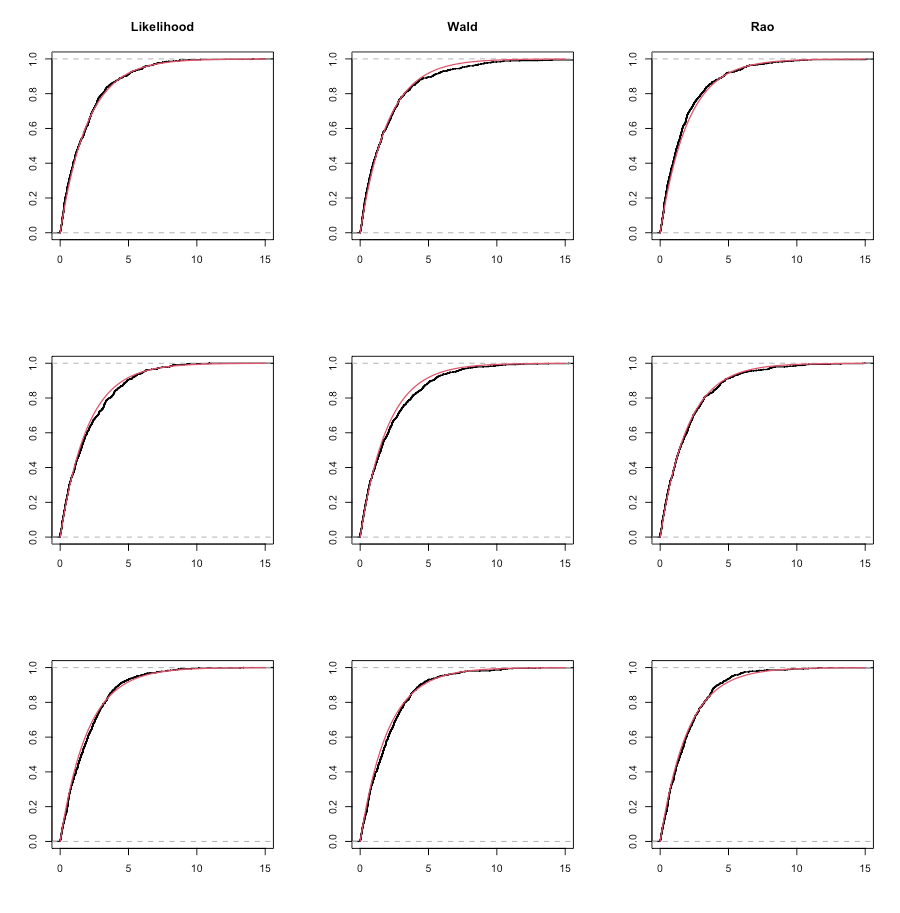}
\caption{Empirical distributions of the three types of test statistics for $\beta$ in {\bf{Case 1}}.   Each row of figures corresponds to the case of 
$n=10^4$ (upper), $10^5$ (middle) and $10^6$ (bottom) 
and each column of figures corresponds to Likelihood ratio (left), Wald (middle) and Rao (right) type test statistics. The red line is the cumulative distribution function of $\chi_2^2$.}
\label{model2_ecdf1_2}
\end{figure}

\begin{figure}[htbp]
\centering
\includegraphics[height=0.39\vsize]{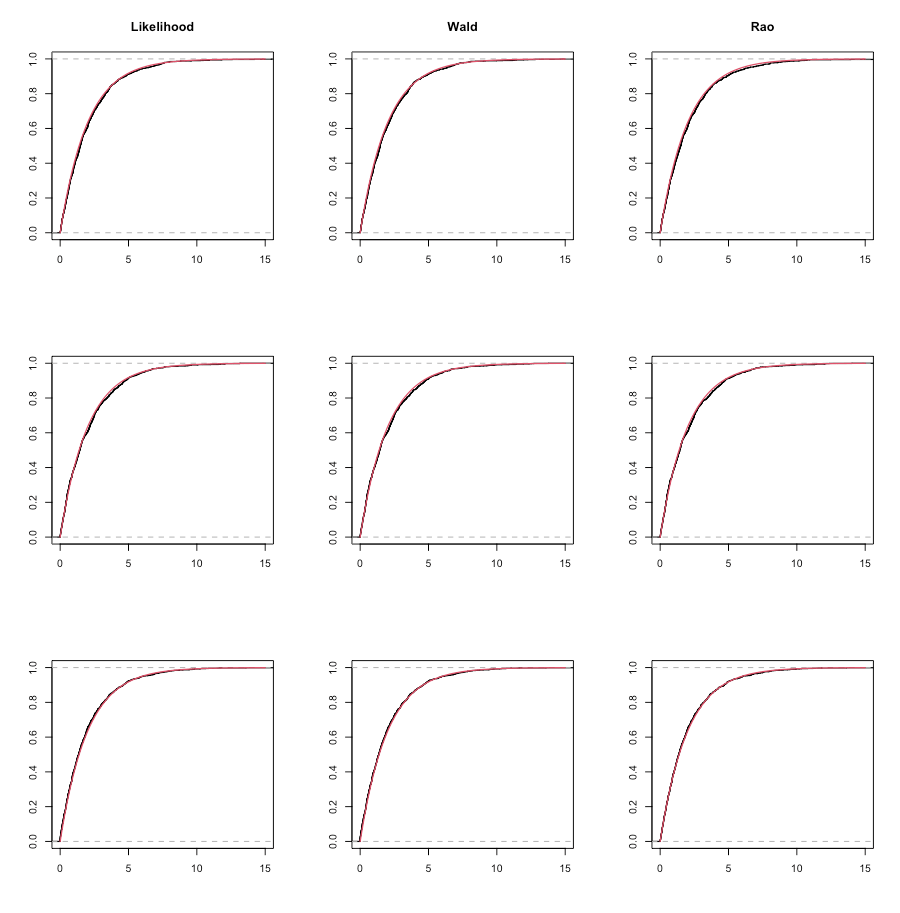}
\caption{Empirical distributions of the three types of test statistics for $\alpha$ in {\bf{Case 2}}.   Each row of figures corresponds to the case of 
$n=10^4$ (upper), $10^5$ (middle) and $10^6$ (bottom) 
and each column of figures corresponds to Likelihood ratio (left), Wald (middle) and Rao (right) type test statistics. The red line is the cumulative distribution function of $\chi_2^2$.}
\label{model2_ecdf2}
\end{figure}

\begin{figure}[htbp]
\centering
\includegraphics[height=0.39\vsize]{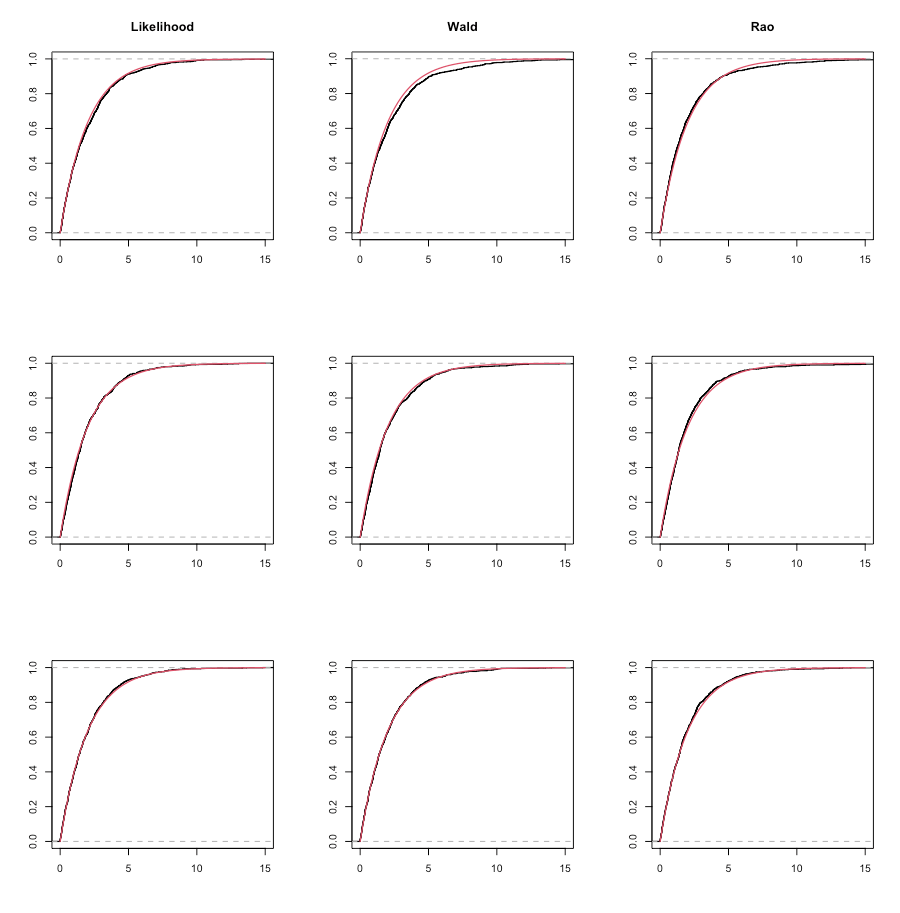}
\caption{Empirical distributions of the three types of test statistics for $\beta$ in {\bf{Case 3}}.   Each row of figures corresponds to the case of 
$n=10^4$ (upper), $10^5$ (middle) and $10^6$ (bottom) 
and each column of figures corresponds to Likelihood ratio (left), Wald (middle) and Rao (right) type test statistics. The red line is the cumulative distribution function of $\chi_2^2$.}
\label{model2_ecdf3}
\end{figure}

\begin{table}[htbp]
\begin{minipage}[htbp]{.49\hsize}
\begin{center}
\caption{Empirical sizes of 
 the test statistics  for $\alpha$ in $\mathbf{Case\ 1}.$}
\label{model2_size_table1-1}
\begin{tabular}{cc|ccc}
$n$&$nh_n$&Likelihood&Wald&Rao\\
\hline\hline
$10^4$&21.5&0.051&0.054&0.053\\
$10^5$&46.4&0.049&0.048&0.049\\
$10^6$&100&0.054&0.054&0.054
\end{tabular}
\end{center} 
\end{minipage}
\hfill
\begin{minipage}[htbp]{.49\hsize}
\begin{center}
\caption{Empirical sizes of 
 the test statistics   for $\beta$ in $\mathbf{Case\ 1}.$}
\label{model2_size_table1-2}
\begin{tabular}{cc|ccc}
$n$&$nh_n$&Likelihood&Wald&Rao\\
\hline\hline
$10^4$&21.5&0.057&0.074&0.057\\
$10^5$&46.4&0.057&0.070&0.057\\
$10^6$&100&0.044&0.047&0.032
\end{tabular}
\end{center}
\end{minipage}
\end{table}

\newpage

\begin{table}[htbp]
\begin{minipage}[htbp]{.49\hsize}
\begin{center}
\caption{Empirical sizes of 
 the test statistics   for $\alpha$ in $\mathbf{Case\ 2}.$}
\label{model2_size_table2}
\begin{tabular}{cc|ccc}
$n$&$nh_n$&Likelihood&Wald&Rao\\
\hline\hline
$10^4$&21.5&0.057&0.059&0.062\\
$10^5$&46.4&0.053&0.056&0.053\\
$10^6$&100&0.051&0.051&0.054
\end{tabular}
\end{center} 
\end{minipage}
\hfill
\begin{minipage}[htbp]{.49\hsize}
\begin{center}
\caption{Empirical sizes of 
the test statistics   for $\beta$ in $\mathbf{Case\ 3}.$}
\label{model2_size_table3}
\begin{tabular}{cc|ccc}
$n$&$nh_n$&Likelihood&Wald&Rao\\
\hline\hline
$10^4$&21.5&0.062&0.079&0.068\\
$10^5$&46.4&0.045&0.054&0.046\\
$10^6$&100&0.049&0.054&0.044
\end{tabular}
\end{center}
\end{minipage}
\end{table}

Next, in order to check consistency of the tests, 
we treat the three kinds of $(\alpha^*_1,\alpha^*_2)$  $((1.05, 1.0), (1.0, 1.05)$ and $(1.05, 1.05))$ in $\mathbf{Case\ 3}$ and the three kinds of $(\beta^*_1,\beta^*_2)$ $((2.5, 2.0), (2.0, 2.5)$ and $(2.5, 2.5))$ in $\mathbf{Case\ 2}$.
Tables \ref{model2_power_table3} and \ref{model2_power_table2} show the empirical powers of the three types of test statistics. 	In table \ref{model2_power_table3}, when $n \leq10^5$, 
since 
the asymptotic behavior of the estimator of $\alpha_1$ is stable compared with that of $\alpha_2$, 
the empirical power in the case of $(\alpha^*_1,\alpha^*_2)=(1.05, 1.0)$ is greater than 
that in the case of $(\alpha^*_1,\alpha^*_2)=(1.0, 1.05)$. 
In Table \ref{model2_power_table2}, 
when $n h_n \leq100$,
the empirical power in the case of $(\beta^*_1,\beta^*_2)=(2.0, 2.5)$ 
is greater than that in the case of $(\beta^*_1,\beta^*_2)=(2.5, 2.0)$ 
because  the asymptotic performance of the estimator of $\beta_2$ is better than that of $\beta_1$.

\begin{table}[htbp]
\begin{minipage}[htbp]{.49\hsize}
\begin{center}
\caption{Empirical powers of the three types of test statistics for $\alpha$ in $\mathbf{Case\ 3}.$}
\label{model2_power_table3}
\begin{tabular}{c|c|ccc}
$(\alpha^*_1,\alpha^*_2)$&$n$&Likelihood&Wald&Rao\\
\hline\hline
&$10^4$&0.898&0.892&0.907\\
(1.05, 1.0)&$10^5$&1.000&1.000&1.000\\
&$10^6$&1.000&1.000&1.000\\
\hline
&$10^4$&0.164&0.149&0.185\\
(1.0, 1.05)&$10^5$&0.918&0.915&0.921\\
&$10^6$&1.000&1.000&1.000\\
\hline
&$10^4$&0.986&0.985&0.987\\
(1.05, 1.05)&$10^5$&1.000&1.000&1.000\\
&$10^6$&1.000&1.000&1.000
\end{tabular}
\end{center} 
\end{minipage}
\hfill
\begin{minipage}[htbp]{.49\hsize}
\begin{center}
\caption{Empirical powers of the three types of test statistics for $\beta$ in $\mathbf{Case\ 2}.$}
\label{model2_power_table2}
\begin{tabular}{c|c|ccc}
$(\beta^*_1,\beta^*_2)$&$nh_n$&Likelihood&Wald&Rao\\
\hline\hline
&$21.5$&0.142&0.200&0.075\\
(2.5, 2.0)&$46.4$&0.207&0.261&0.141\\
&$100$&0.401&0.427&0.342\\
\hline
&$21.5$&0.861&0.848&0.780\\
(2.0, 2.5)&$46.4$&0.997&0.994&0.984\\
&$100$&1.000&1.000&1.000\\
\hline
&$21.5$&0.940&0.959&0.651\\
(2.5, 2.5)&$46.4$&1.000&1.000&0.989\\
&$100$&1.000&1.000&1.000\\
\end{tabular}
\end{center}
\end{minipage}
\end{table}

\section{Proofs}\label{sec5}
Let
\begin{align*}
I_a(\alpha_0;\alpha_0)
=\begin{pmatrix}
I_{a,1}(\alpha_0;\alpha_0)&I_{a,2}(\alpha_0;\alpha_0)\\
I_{a,2}^\top(\alpha_0;\alpha_0)&I_{a,3}(\alpha_0;\alpha_0)
\end{pmatrix},
\quad
H_1=\begin{pmatrix}
0&0\\
0&I_{a,3}^{-1}(\alpha_0;\alpha_0)
\end{pmatrix},
\end{align*}
where
$I_{a,1}(\alpha_0;\alpha_0)$,
$I_{a,2}(\alpha_0;\alpha_0)$,
$I_{a,3}(\alpha_0;\alpha_0)$
are the $(r_1\times r_1)$, $(r_1\times (p_1-r_1))$ 
and $((p_1-r_1)\times (p_1-r_1))$
matrices, respectively.
Set
\begin{align*}
I_b(\theta_0;\theta_0)
=\begin{pmatrix}
I_{b,1}(\theta_0;\theta_0)&I_{b,2}(\theta_0;\theta_0)\\
I_{b,2}^\top(\theta_0;\theta_0)&I_{b,3}(\theta_0;\theta_0)
\end{pmatrix}, 
\quad 
H_2=\begin{pmatrix}
0&0\\
0&I_{b,3}^{-1}(\theta_0;\theta_0)
\end{pmatrix},
\end{align*}
where
$I_{b,1}(\theta_0;\theta_0)$,
$I_{b,2}(\theta_0;\theta_0)$,
$I_{b,3}(\theta_0;\theta_0)$
are the $(r_2\times r_2)$, $(r_2\times (p_2-r_2))$ and $((p_2-r_2)\times (p_2-r_2))$
matrices, respectively.
Moreover, $Y_1$ and $Y_2$ are the normal random variables
with mean $0$ and covariance matrices
$I_a(\alpha_0;\alpha_0)$ and $I_b(\theta_0;\theta_0)$,
respectively, 
that is, 
$Y_1 \sim N(0,I_a(\alpha_0;\alpha_0))$ and  $Y_2\sim N(0,I_b(\theta_0;\theta_0))$.

In order to proof Theorem $\ref{thm:1}$, we need the following lemma.
 \begin{lemma}\label{lem:1}
Assume $\mathbf{A1\mathchar`-A7}$. If $h_n\to0,\ nh_n\to\infty$ and $nh_n^2\to0$, then
\begin{enumerate}
\item [$(\mathrm{i})$] $\sqrt{n}(\hat{\alpha}_n-\tilde{\alpha}_n)\overset{d}{\to}(I_{a}^{-1}(\alpha_0;\alpha_0)-H_1)Y_1\quad(\text{under}\ H_0^{(1)}),$
\item [$(\mathrm{ii})$]  $\sqrt{nh_n}(\hat{\beta}_n-\tilde{\beta}_n)\overset{d}{\to}(I_{b}^{-1}(\theta_0;\theta_0)-H_2)Y_2\quad(\text{under}\ H_0^{(2)})$. 
\end{enumerate}
\end{lemma}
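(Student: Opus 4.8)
The plan is to linearize both the unrestricted and the restricted score equations about the true value and to express the two centered estimators as linear functionals of a single common normalized score; the difference $\hat\alpha_n-\tilde\alpha_n$ then converges by Slutsky's theorem and the central limit theorem for the score. I first treat (i). Under $H_0^{(1)}$ the true value $\alpha_0$ belongs to $\tilde\Theta_\alpha$, so its first $r_1$ coordinates vanish. Write $Z_n:=n^{-1/2}\partial_\alpha U_n^{(1)}(\alpha_0)$; from the adaptive estimator theory of Uchida and Yoshida \cite{Uchida_2012} we have $Z_n\overset{d}{\to}Y_1$ and $I_{a,n}(\alpha_0)\overset{P}{\to}I_a(\alpha_0;\alpha_0)$, the latter nonsingular by {\bf A7}. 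A Taylor expansion of $0=\partial_\alpha U_n^{(1)}(\hat\alpha_n)$ about $\alpha_0$, with the remainder controlled by the polynomial-type large deviation estimates of \cite{Uchida_2012} together with consistency of $\hat\alpha_n$, gives
$$\sqrt{n}(\hat\alpha_n-\alpha_0)=I_{a,n}^{-1}(\alpha_0)\,Z_n+o_P(1).$$

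For the restricted estimator I partition $\alpha=(\alpha^{(1)},\alpha^{(2)})$ into its first $r_1$ and last $p_1-r_1$ coordinates. Since $\tilde\alpha_n$ maximizes $U_n^{(1)}$ over $\tilde\Theta_\alpha$, it satisfies $\tilde\alpha_n^{(1)}=0=\alpha_0^{(1)}$ and $\partial_{\alpha^{(2)}}U_n^{(1)}(\tilde\alpha_n)=0$. Expanding the latter about $\alpha_0$, the block multiplying $\tilde\alpha_n^{(1)}-\alpha_0^{(1)}$ drops out, leaving
$$\sqrt{n}(\tilde\alpha_n^{(2)}-\alpha_0^{(2)})=I_{a,n,3}^{-1}(\alpha_0)\,Z_n^{(2)}+o_P(1),$$
where $I_{a,n,3}$ is the lower-right block of $I_{a,n}$ and $Z_n^{(2)}$ the last $p_1-r_1$ coordinates of $Z_n$. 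In block form this is $\sqrt{n}(\tilde\alpha_n-\alpha_0)=H_{1,n}Z_n+o_P(1)$ with $H_{1,n}:=\mathrm{diag}(0,I_{a,n,3}^{-1}(\alpha_0))\overset{P}{\to}H_1$. Subtracting yields
$$\sqrt{n}(\hat\alpha_n-\tilde\alpha_n)=(I_{a,n}^{-1}(\alpha_0)-H_{1,n})Z_n+o_P(1)\overset{d}{\to}(I_a^{-1}(\alpha_0;\alpha_0)-H_1)Y_1,$$
which is (i).

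Part (ii) repeats this two-step linearization for $\beta$ at rate $\sqrt{nh_n}$, using $(nh_n)^{-1/2}\partial_\beta U_n^{(2)}(\beta_0|\alpha_0)\overset{d}{\to}Y_2$ and $I_{b,n}(\beta_0|\alpha_0)\overset{P}{\to}I_b(\theta_0;\theta_0)$. The one genuinely new point is that both $\hat\beta_n$ and $\tilde\beta_n$ are built from $U_n^{(2)}(\cdot|\hat\alpha_n)$ rather than $U_n^{(2)}(\cdot|\alpha_0)$, so I must show the plug-in is first-order negligible. Expanding the $\beta$-score in $\alpha$ about $\alpha_0$, the correction is $(nh_n)^{-1/2}\partial_\alpha\partial_\beta U_n^{(2)}(\beta_0|\alpha_0)(\hat\alpha_n-\alpha_0)$. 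The cross-derivative splits into a mean-zero martingale part of order $O_P(\sqrt{nh_n})$ and a predictable bias of order $O(nh_n^2)$; multiplied by $\hat\alpha_n-\alpha_0=O_P(n^{-1/2})$ and normalized, the martingale contribution is $O_P(n^{-1/2})$ and the bias contribution is $O(h_n^{3/2})$, both $o_P(1)$. This is exactly where the block-diagonal form of $I(\theta_0;\theta_0)$ enters: the leading predictable part of the cross-information is zero, so only the mean-zero martingale survives and the rate gap $\sqrt{h_n}\to0$ kills it. Once the score is reduced to $U_n^{(2)}(\cdot|\alpha_0)$, the block-partition algebra is verbatim that of the $\alpha$-case and gives $\sqrt{nh_n}(\hat\beta_n-\tilde\beta_n)\overset{d}{\to}(I_b^{-1}(\theta_0;\theta_0)-H_2)Y_2$.

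The main obstacle is precisely this plug-in step: one must verify, after normalizing by $(nh_n)^{-1/2}$, that the dependence of the $\beta$-score on the estimated $\hat\alpha_n$ contributes nothing at first order. This requires the martingale central limit and polynomial large deviation machinery of \cite{Uchida_2012} to bound $\partial_\alpha\partial_\beta U_n^{(2)}$ and its bias uniformly, and it is here that the conditions $nh_n\to\infty$ and $nh_n^2\to0$ are used to control the discretization error in the drift increments. Everything else is the deterministic linear algebra of the restricted-versus-unrestricted comparison, which is common to both (i) and (ii).
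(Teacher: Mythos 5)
Your proposal is correct and follows essentially the same route as the paper: both arguments rest on the asymptotic linearity of the unrestricted estimator, the block-structure identity $\sqrt{n}(\tilde{\alpha}_n-\alpha_0)=H_1\,n^{-1/2}\partial_\alpha U_n^{(1)}(\alpha_0)+o_P(1)$ coming from the restricted normal equations (the paper's equation \eqref{goal:1}), and, for part (ii), the negligibility of the cross term $\frac{1}{n\sqrt{h_n}}\partial_\alpha\partial_\beta U_n^{(2)}$ arising from plugging in $\hat{\alpha}_n$. Your final assembly (subtracting two linearizations about $\alpha_0$) is a cosmetic variant of the paper's (expanding the score at $\tilde{\alpha}_n$ and then Taylor-expanding between $\hat{\alpha}_n$ and $\tilde{\alpha}_n$), and your order computations for the plug-in correction ($O_P(n^{-1/2})$ and $O(h_n^{3/2})$) match what the paper implicitly uses.
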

\begin{proof}[Proof]\ $(\mathrm{i})$\ 
First of all, 
we will show that

\begin{align}\label{goal:1}
H_1\frac{1}{\sqrt{n}}\partial_{\alpha}U_n^{(1)}(\alpha_0)=\sqrt{n}(\tilde{\alpha}_n-\alpha_0)+o_P(1)\quad(\text{under}\ H_0^{(1)}).
\end{align}
From Taylor's theorem,
\begin{align}\label{equ:lm1:a}
\frac{1}{\sqrt{n}}\partial_{\alpha}U_n^{(1)}(\tilde{\alpha}_n)&-\frac{1}{\sqrt{n}}\partial_{\alpha}U_n^{(1)}(\alpha_0)=\tilde{I}_{a,n}(\tilde{\alpha}_n,\alpha_0)\sqrt{n}(\tilde{\alpha}_n-\alpha_0),
\end{align}
where 
\begin{align*}
\tilde{I}_{a,n}(\tilde{\alpha}_n,\alpha_0):=\int_{0}^{1}{\frac{1}{n}\partial_\alpha^2U_n^{(1)}(\alpha_0+u(\tilde{\alpha}_n-\alpha_0))}du.
\end{align*}
Noting that $H_1\partial_\alpha U_n^{(1)}(\tilde{\alpha}_n)=0$, we obtain
\begin{equation}\label{equ:lm3:a}
H_1\frac{1}{\sqrt{n}}\partial_{\alpha}U_n^{(1)}(\alpha_0)=-H_1\tilde{I}_{a,n}(\tilde{\alpha}_n,\alpha_0)\sqrt{n}(\tilde{\alpha}_n-\alpha_0).
\end{equation}
On the other hand, we can show
\begin{align}
-\tilde{I}_{a,n}(\tilde{\alpha}_n,\alpha_0)\overset{P}{\to}I_a(\alpha_0;\alpha_0)\quad(\text{under}\ H_0^{(1)}),\label{equ:uchida-1} \\
\frac{1}{\sqrt{n}}\partial_\alpha U_n^{(1)}(\alpha_0)\overset{d}{\to}
Y_1\quad(\text{under}\ H_0^{(1)}),  \label{equ:uchida-6}
\\
H_1\frac{1}{\sqrt{n}}\partial_\alpha U_n^{(1)}(\alpha_0)\overset{d}{\to}H_1Y_1\quad(\text{under}\ H_0^{(1)}). \label{equ:uchida-3}
\end{align}
We set $\tilde{\alpha}_n^{(p_1-r_1)}=(\tilde{\alpha}_{n,r_1+1}-\alpha_{0,r_1+1},\ldots,\tilde{\alpha}_{n,p_1}-\alpha_{0,p_1})$.
Noting that the first $r_1$ components 
of $\sqrt{n}(\tilde{\alpha}_n-\alpha_0)$
are all $0$,  one has that under $H_0^{(1)}$, 
\begin{align}
H_1 \tilde{I}_{a,n}(\alpha_0,\alpha_0)
 \sqrt{n}(\tilde{\alpha}_n-\alpha_0)
 =& 
 \begin{pmatrix}
0 & 0 \\
I_{a,3}^{-1} \tilde{I}_{a, n, 2}^T(\alpha_0, \alpha_0) 
& 
I_{a,3}^{-1} \tilde{I}_{a, n, 3}(\alpha_0, \alpha_0) 
\end{pmatrix}
\begin{pmatrix}
0\\
\sqrt{n}\tilde{\alpha}_n^{(p_1-r_1)}
\end{pmatrix}
\nonumber 
\\
=&
\begin{pmatrix}
0\\
I_{a,3}^{-1} \tilde{I}_{a, n, 3}(\alpha_0, \alpha_0)  
\sqrt{n}\tilde{\alpha}_n^{(p_1-r_1)}
\end{pmatrix}, 
\label{equ:uchida-4}
\\
H_1 { I_{a}(\alpha_0;\alpha_0)}
 \sqrt{n}(\tilde{\alpha}_n-\alpha_0)
 =& \begin{pmatrix}
0 & 0 \\
I_{a,3}^{-1} I_{a,2}^T(\alpha_0; \alpha_0)
& 
E_{p_1-r_1}
\end{pmatrix}
\begin{pmatrix}
0\\
\sqrt{n}\tilde{\alpha}_n^{(p_1-r_1)}
\end{pmatrix}
=\sqrt{n}(\tilde{\alpha}_n-\alpha_0),
\label{equ:uchida-5}
\end{align}
where $E_{p}$ is the $(p\times p)$ identity matrix. 
The above equations 
\eqref{equ:lm3:a}, \eqref{equ:uchida-1}, \eqref{equ:uchida-3}  and
\eqref{equ:uchida-4}
implies that under $H_0^{(1)}$, 
\begin{align*}
\sqrt{n}\tilde{\alpha}_n^{(p_1-r_1)} =O_P(1).
\end{align*}
Since the first $r_1$ components 
of $\sqrt{n}(\tilde{\alpha}_n-\alpha_0)$
are all $0$, 
it follows that
\begin{align}
\sqrt{n}(\tilde{\alpha}_n-\alpha_0)=O_P(1)\quad(\text{under}\ H_0^{(1)}). 
\label{equ:uchida-2} 
\end{align}
Hence, 
by \eqref{equ:lm3:a}, \eqref{equ:uchida-1},   \eqref{equ:uchida-5} and
\eqref{equ:uchida-2},  
we have that under $H_0^{(1)}$, 
\begin{align*}
H_1\frac{1}{\sqrt{n}}\partial_{\alpha}U_n^{(1)}(\alpha_0)&=H_1\left({I}_{a}(\alpha_0;\alpha_0)+o_P(1)\right)\sqrt{n}(\tilde{\alpha}_n-\alpha_0)\\
&=H_1I_a(\alpha_0;\alpha_0)\sqrt{n}(\tilde{\alpha}_n-\alpha_0)+o_P(1)\\
&=\sqrt{n}(\tilde{\alpha}_n-\alpha_0)+o_P(1).
\end{align*}
This accords with \eqref{goal:1}. 
Next, 
it follows from \eqref{equ:lm1:a},  \eqref{equ:uchida-1},
\eqref{equ:uchida-2}, \eqref{goal:1} and  \eqref{equ:uchida-6},
that
\begin{align}
\frac{1}{\sqrt{n}}\partial_{\alpha}U_n^{(1)}(\tilde{\alpha}_n)&=\frac{1}{\sqrt{n}}\partial_{\alpha}U_n^{(1)}(\alpha_0)+\tilde{I}_{a,n}(\tilde{\alpha}_n,\alpha_0)\sqrt{n}(\tilde{\alpha}_n-\alpha_0)
\nonumber \\
&=\frac{1}{\sqrt{n}}\partial_{\alpha}U_n^{(1)}(\alpha_0)-{I}_{a}(\alpha_0;\alpha_0)\sqrt{n}(\tilde{\alpha}_n-\alpha_0)+o_P(1)
\nonumber \\
&=\frac{1}{\sqrt{n}}\partial_{\beta}U_n^{(1)}(\alpha_0)-{I}_{a}(\alpha_0;\alpha_0)H_1\frac{1}{\sqrt{n}}\partial_{\alpha}U_n^{(1)}(\alpha_0)+o_P(1)
\nonumber \\
&=(E_{p_1}-I_a(\alpha_0;\alpha_0)H_1)\frac{1}{\sqrt{n}}\partial_{\alpha}U_n^{(1)}(\alpha_0)+o_P(1)
\nonumber \\
&\overset{d}{\to}(E_{p_1}-I_a(\alpha_0;\alpha_0)H_1)Y_1\quad(\text{under}\ H_0^{(1)}).
\label{equ:uchida-7}
\end{align}
By noting that $\partial_{\alpha}U_n^{(1)}(\hat{\alpha}_n)=0$, 
it follows from Taylor's theorem that
\begin{align*}
-\frac{1}{\sqrt{n}}\partial_{\alpha}U_n^{(1)}(\tilde{\alpha}_n)&=\tilde{I}_{a,n}(\hat{\alpha}_n,\tilde{\alpha}_n)\sqrt{n}(\hat{\alpha}_n-\tilde{\alpha}_n)\\
&=-I_a(\alpha_0;\alpha_0)\sqrt{n}(\hat{\alpha}_n-\tilde{\alpha}_n)+o_P(1).
\end{align*}
Since $I_a(\alpha_0;\alpha_0)$ is non-singular, by \eqref{equ:uchida-7} and Slutsky's theorem,
\begin{align*}
\sqrt{n}(\hat{\alpha}_n-\tilde{\alpha}_n)&\overset{d}{\to}I_a^{-1}(\alpha_0;\alpha_0)(E_{p_1}-I_a(\alpha_0;\alpha_0)H_1)Y_1\\
&=(I_{a}^{-1}(\alpha_0;\alpha_0)-H_1)Y_1\quad(\text{under}\ H_0^{(1)}).
\end{align*}
This completes the proof of $(\mathrm{i})$.

$(\mathrm{ii})$ First, we will show that 
\begin{align}\label{goal:2}
H_2\frac{1}{\sqrt{nh_n}}\partial_{\beta}U_n^{(2)}(\beta_0|\alpha_0)=\sqrt{nh_n}(\tilde{\beta}_n-\beta_0)+o_P(1)\quad(\text{under}\ H_0^{(2)}). 
\end{align}
From Taylor's theorem with respect to $\beta$, 
\begin{align}\label{equ:lm1}
\frac{1}{\sqrt{nh_n}}\partial_{\beta}U_n^{(2)}(\tilde{\beta}_n|\hat{\alpha}_n)&=\frac{1}{\sqrt{nh_n}}\partial_{\beta}U_n^{(2)}(\beta_0|\hat{\alpha}_n)+\tilde{I}_{b,n}(\tilde{\beta}_n,\beta_0)\sqrt{nh_n}(\tilde{\beta}_n-\beta_0),
\end{align}
where
\begin{align*}
\tilde{I}_{b,n}(\tilde{\beta}_n,\beta_0):=\int_{0}^{1}{\frac{1}{nh_n}\partial_\beta^2U_n^{(2)}(\beta_0+u(\tilde{\beta}_n-\beta_0)|\hat{\alpha}_n)}du.
\end{align*}
Moreover, by Taylor's theorem with respect to $\alpha$,
\begin{align}\label{equ:lm2}
\frac{1}{\sqrt{nh_n}}\partial_{\beta}U_n^{(2)}(\beta_0|\hat{\alpha}_n)&=\frac{1}{\sqrt{nh_n}}\partial_{\beta}U_n^{(2)}(\beta_0|\alpha_0)+\tilde{I}_{ab,n}(\hat{\alpha}_n,\alpha_0)\sqrt{n}(\hat{\alpha}_n-\alpha_0),
\end{align}
where
\begin{align*}
\tilde{I}_{ab,n}(\hat{\alpha}_n,\alpha_0):=\int_{0}^{1}{\frac{1}{n\sqrt{h_n}}\partial_\alpha\partial_\beta U_n^{(2)}(\beta_0|\alpha_0+u(\hat{\alpha}_n-\alpha_0))}du.
\end{align*}
We can show $\tilde{I}_{ab,n}(\hat{\alpha}_n,\alpha_0)\overset{P}{\to}0\ (\text{under}\ H_0^{(2)})$.
It follows from \eqref{equ:lm1} and \eqref{equ:lm2} that
\begin{align}\label{equ:lm4}
\frac{1}{\sqrt{nh_n}}\partial_{\beta}U_n^{(2)}(\tilde{\beta}_n|\hat{\alpha}_n)=\frac{1}{\sqrt{nh_n}}\partial_{\beta}U_n^{(2)}(\beta_0|\alpha_0)+\tilde{I}_{b,n}(\tilde{\beta}_n,\beta_0)\sqrt{nh_n}(\tilde{\beta}_n-\beta_0)+o_P(1).
\end{align}
Since $H_2\partial_\beta U_n^{(2)}(\tilde{\beta}_n|\hat{\alpha}_n)=0$, we obtain
\begin{equation}\label{equ:lm3}
H_2\frac{1}{\sqrt{nh_n}}\partial_{\beta}U_n^{(2)}(\beta_0|\alpha_0)=-H_2\tilde{I}_{b,n}(\tilde{\beta}_n,\beta_0)\sqrt{nh_n}(\tilde{\beta}_n-\beta_0)+o_P(1).
\end{equation}
On the other hand, we have
\begin{align}
-\tilde{I}_{b,n}(\tilde{\beta}_n,\beta_0)\overset{P}{\to}I_b(\theta_0;\theta_0)\quad(\text{under}\ H_0^{(2)}),
\label{beta:uchida-1} \\
\frac{1}{\sqrt{nh_n}}\partial_\beta U_n^{(2)}(\beta_0|\alpha_0)\overset{d}{\to}
Y_2\quad(\text{under}\ H_0^{(2)}), 
\label{beta:uchida-2} \\
H_2\frac{1}{\sqrt{nh_n}}\partial_\beta U_n^{(2)}(\beta_0|\alpha_0)\overset{d}{\to}H_2Y_2\quad(\text{under}\ H_0^{(2)}).
\label{beta:uchida-3} 
\end{align}
If we set $\tilde{\beta}_n^{(p_2-r_2)}=(\tilde{\beta}_{n,r_2+1}-\beta_{0,r_2+1},\ldots,\tilde{\beta}_{n,p_2}-\beta_{0,p_2})$, then
\begin{align}
H_2 \tilde{I}_{b,n}(\beta_0,\beta_0) \sqrt{nh_n}(\tilde{\beta}_n-\beta_0)
=& 
\begin{pmatrix}
0 & 0 \\
I_{b,3}^{-1} \tilde{I}_{b, n, 2}^T(\beta_0, \beta_0) 
& 
I_{b,3}^{-1} \tilde{I}_{b, n, 3}(\beta_0, \beta_0) 
\end{pmatrix}
\begin{pmatrix}
0\\
\sqrt{nh_n}\tilde{\beta}_n^{(p_2-r_2)}
\end{pmatrix}
\nonumber 
\\
=&
\begin{pmatrix}
0\\
I_{b,3}^{-1} \tilde{I}_{b, n, 3}(\beta_0, \beta_0)  
\sqrt{nh_n}\tilde{\beta}_n^{(p_2-r_2)}
\end{pmatrix}, 
\label{beta:uchida-4}
\\
H_2 I_{b}(\theta_0;\theta_0)\sqrt{nh_n}(\tilde{\beta}_n-\beta_0)
=&
\begin{pmatrix}
0&0\\
I_{b,3}^{-1} I_{b, 2}^T(\theta_0; \theta_0)  &E_{p_2-r_2}
\end{pmatrix}
\begin{pmatrix}
0\\
\sqrt{nh_n}\tilde{\beta}_n^{(p_2-r_2)}
\end{pmatrix}=\sqrt{nh_n}(\tilde{\beta}_n-\beta_0).
\label{beta:uchida-5}
\end{align}
In an analogous manner to the proof of
\eqref{equ:uchida-2},
one has that 
\begin{align}
\sqrt{nh_n}(\tilde{\beta}_n-\beta_0)=O_P(1)\quad(\text{under}\ H_0^{(2)}).
\label{beta:uchida-6}
\end{align}
Hence,  
\eqref{equ:lm3}, \eqref{beta:uchida-1}, 
\eqref{beta:uchida-6} 
and \eqref{beta:uchida-5} 
implies that
\begin{align}
H_2\frac{1}{\sqrt{nh_n}}\partial_{\beta}U_n^{(2)}(\beta_0|\alpha_0)&=H_2\left({I}_{b}(\theta_0;\theta_0)+o_P(1)\right)\sqrt{nh_n}(\tilde{\beta}_n-\beta_0)+o_P(1)
\nonumber \\
&=H_2I_b(\theta_0;\theta_0)\sqrt{nh_n}(\tilde{\beta}_n-\beta_0)+o_P(1)
\nonumber \\
&=\sqrt{nh_n}(\tilde{\beta}_n-\beta_0)+o_P(1). 
\label{beta:uchida-7} 
\end{align}
This accords with \eqref{goal:2}. 
It follows from \eqref{equ:lm4}, \eqref{beta:uchida-1}, 
\eqref{beta:uchida-6}, \eqref{beta:uchida-7} 
and \eqref{beta:uchida-2} that
\begin{align}
\frac{1}{\sqrt{nh_n}}\partial_{\beta}U_n^{(2)}(\tilde{\beta}_n|\hat{\alpha}_n)&=\frac{1}{\sqrt{nh_n}}\partial_{\beta}U_n^{(2)}(\beta_0|\alpha_0)+\tilde{I}_{b,n}(\tilde{\beta}_n,\beta_0)\sqrt{nh_n}(\tilde{\beta}_n-\beta_0)+o_P(1)
\nonumber \\
&=\frac{1}{\sqrt{nh_n}}\partial_{\beta}U_n^{(2)}(\beta_0|\alpha_0)-{I}_{b}(\theta_0;\theta_0)\sqrt{nh_n}(\tilde{\beta}_n-\beta_0)+o_P(1)
\nonumber \\
&=\frac{1}{\sqrt{nh_n}}\partial_{\beta}U_n^{(2)}(\beta_0|\alpha_0)-{I}_{b}(\theta_0;\theta_0)H_2\frac{1}{\sqrt{nh_n}}\partial_{\beta}U_n^{(2)}(\beta_0|\alpha_0)+o_P(1)
\nonumber \\
&=(E_{p_2}-I_b(\theta_0;\theta_0)H_2)\frac{1}{\sqrt{nh_n}}\partial_{\beta}U_n^{(2)}(\beta_0|\alpha_0)+o_P(1)
\nonumber \\
&\overset{d}{\to}(E_{p_2}-I_b(\theta_0;\theta_0)H_2)Y_2\quad(\text{under}\ H_0^{(2)}).
\label{beta:uchida-8} 
\end{align}
Since we have that under $H_0^{(2)}$,
\begin{eqnarray*}
& & \sqrt{nh_n}(\hat{\beta}_n-\tilde{\beta}_n) =O_P(1), \\
& & \tilde{I}_{b,n}(\hat{\beta}_n,\tilde{\beta}_n) = - I_b(\theta_0;\theta_0) +o_P(1),
\end{eqnarray*}
it follows from Taylor's theorem that under $H_0^{(2)}$,  
\begin{align*}
-\frac{1}{\sqrt{nh_n}}\partial_{\beta}U_n^{(2)}(\tilde{\beta}_n|\hat{\alpha}_n)&
=\tilde{I}_{b,n}(\hat{\beta}_n,\tilde{\beta}_n)\sqrt{nh_n}(\hat{\beta}_n-\tilde{\beta}_n)\\
&=-I_b(\theta_0;\theta_0)\sqrt{nh_n}(\hat{\beta}_n-\tilde{\beta}_n)+o_P(1).
\end{align*}
Since $I_b(\theta_0;\theta_0)$ is non-singular, 
by \eqref{beta:uchida-8} and Slutsky's theorem,
\begin{align*}
\sqrt{nh_n}(\hat{\beta}_n-\tilde{\beta}_n)&\overset{d}{\to}I_b^{-1}(\theta_0;\theta_0)(E_{p_2}-I_b(\theta_0,\theta_0)H_2)Y_2\\
&=(I_{b}^{-1}(\theta_0;\theta_0)-H_2)Y_2\quad(\text{under}\ H_0^{(2)}).
\end{align*}
This completes the proof of $(\mathrm{ii})$.
\end{proof}

\begin{proof}[Proof of Theorem 1]\ $(\mathrm{i})$\ By Taylor's theorem, we obtain
\begin{align*}
U_n^{(1)}(\tilde{\alpha}_n)-U_n^{(1)}(\hat{\alpha}_n)=\left(\sqrt{n}(\hat{\alpha}_n-\tilde{\alpha}_n)\right)^\top J_{a,n}(\tilde{\alpha}_n,\hat{\alpha}_n)\sqrt{n}(\hat{\alpha}_n-\tilde{\alpha}_n),
\end{align*}
where
\begin{align*}
 J_{a,n}(\tilde{\alpha}_n,\hat{\alpha}_n)=\int_{0}^{1}{(1-u)\frac{1}{n}\partial_\alpha^2 U_n^{(1)}(\hat{\alpha}_n+u(\tilde{\alpha}_n-\hat{\alpha}_n))}du.
\end{align*}
It follows from  Lemma $\ref{lem:1}$ that
\begin{align*}
\sqrt{n}(\hat{\alpha}_n-\tilde{\alpha}_n)\overset{d}{\to}(I_a^{-1}(\alpha_0;\alpha_0)-H_1)Y_1\quad(\text{under}\ H_0^{(1)}).
\end{align*}
Moreover, we can check that
under $H_0^{(1)}$, 
\begin{align*}
\hat{\alpha}_n\overset{P}{\to}\alpha_0,&\quad\tilde{\alpha}_n\overset{P}{\to}\alpha_0,\\
J_{a,n}(\tilde{\alpha}_n,\hat{\alpha}_n)&\overset{P}{\to}-\frac{1}{2}I_a(\alpha_0;\alpha_0).
\end{align*}
We set $Y_1=I_a^{\frac{1}{2}}(\alpha_0;\alpha_0)Z_1$,
$Z_1\sim N(0,E_{p_1})$
and
\begin{align*}
P_1=I_a^{\frac{1}{2}}(\alpha_0;\alpha_0)(I_a^{-1}(\alpha_0;\alpha_0)-H_1)I_a^{\frac{1}{2}}(\alpha_0;\alpha_0).
\end{align*}
It follows from the continuous mapping theorem that
\begin{align*}
\Lambda_n^{(1)}&=-2(U_n^{(1)}(\tilde{\alpha}_n)-U_n^{(1)}(\hat{\alpha}_n))\\
&\overset{d}{\to}Y_1^\top (I_a^{-1}(\alpha_0;\alpha_0)-H_1)I_a(\alpha_0;\alpha_0)(I_a^{-1}(\alpha_0;\alpha_0)-H_1)Y_1\\
&=Z_1^\top I_a^{\frac{1}{2}}(\alpha_0;\alpha_0)(I_a^{-1}(\alpha_0;\alpha_0)-H_1)I_a^{\frac{1}{2}}(\alpha_0;\alpha_0)Z_1
= Z_1^\top  P_1 Z_1.
\end{align*}
Since $P_1$ is the projection matrix with 
$$
\mathrm{rank}\ P_1=\mathrm{trace}\ P_1=r_1,
$$ 
we obtain
\begin{align*}
\Lambda_n^{(1)}\overset{d}{\to}\chi_{r_1}^2\quad(\text{under}\ H_0^{(1)}),
\end{align*}
which implies the first statement of $(\mathrm{i})$.

Next, 
by Taylor's theorem with respect to $\beta$, 
one has that under $H_0^{(2)}$,
\begin{align*}
U_n^{(2)}(\tilde{\beta}_n|\hat{\alpha}_n)-U_n^{(2)}(\hat{\beta}_n|\hat{\alpha}_n)=\left(\sqrt{nh_n}(\hat{\beta}_n-\tilde{\beta}_n)\right)^\top J_{b,n}(\tilde{\beta}_n,\hat{\beta}_n)\sqrt{nh_n}(\hat{\beta}_n-\tilde{\beta}_n),
\end{align*}
where
\begin{align*}
 J_{b,n}(\tilde{\beta}_n,\hat{\beta}_n):=\int_{0}^{1}{(1-u)\frac{1}{nh_n}\partial_\beta^2 U_n^{(2)}(\hat{\beta}_n+u(\tilde{\beta}_n-\hat{\beta}_n)|\hat{\alpha}_n)}du.
\end{align*}
It follows from Lemma $\ref{lem:1}$ that
\begin{align*}
\sqrt{nh_n}(\hat{\beta}_n-\tilde{\beta}_n)\overset{d}{\to}(I_b^{-1}(\theta_0;\theta_0)-H_2)Y_2\quad(\text{under}\ H_0^{(2)}),
\end{align*}
and we can check
\begin{align*}
\hat{\beta}_n\overset{P}{\to}\beta_0,&\quad\tilde{\beta}_n\overset{P}{\to}\beta_0,\\
J_{b,n}(\tilde{\beta}_n,\hat{\beta}_n)&\overset{P}{\to}-\frac{1}{2}I_b(\theta_0;\theta_0).
\end{align*}
Let $Y_2=I_b^{\frac{1}{2}}(\theta_0;\theta_0)Z_2$, 
$Z_2\sim N(0,E_{p_2})$
and
\begin{align*}
P_2=I_b^{\frac{1}{2}}(\theta_0;\theta_0)(I_b^{-1}(\theta_0;\theta_0)-H_2)
I_b^{\frac{1}{2}}(\theta_0;\theta_0).
\end{align*}
Note that $P_2$ is the projection matrix with 
$$
\mathrm{rank}\ P_2=\mathrm{trace}\ P_2=r_2.
$$
The continuous mapping theorem yields that
\begin{align*}
\Lambda_n^{(2)}&=-2(U_n^{(2)}(\tilde{\beta}_n|\hat{\alpha}_n)-U_n^{(2)}(\hat{\beta}_n|\hat{\alpha}_n))\\
&\overset{d}{\to}Y_2^\top (I_b^{-1}(\theta_0;\theta_0)-H_2)I_b(\theta_0;\theta_0)(I_b^{-1}(\theta_0;\theta_0)-H_2)Y_2\\
&=Z_2^\top I_b^{\frac{1}{2}}(\theta_0;\theta_0)(I_b^{-1}(\theta_0;\theta_0)-H_2)I_b^{\frac{1}{2}}(\theta_0;\theta_0)Z_2 = Z_2^\top   P_2 Z_2 \\
&\sim \chi_{r_2}^2\quad(\text{under}\ H_0^{(2)}).
\end{align*}
This completes the proof of the Likelihood type.

$(\mathrm{ii})$ By noting that 
$$I_{a,n}(\hat{\alpha}_n)\overset{P}{\to}I_a(\alpha_0;\alpha_0)\quad (\text{under}\ H_0^{(1)}),$$
it follows from Lemma $\ref{lem:1}$ and the continuous mapping theorem that
\begin{align*}
W_n^{(1)}&=\left(\sqrt{n}(\hat{\alpha}_n-\tilde{\alpha}_n)\right)^\top I_{a,n}(\hat{\alpha}_n)\sqrt{n}(\hat{\alpha}_n-\tilde{\alpha}_n)\\
&\overset{d}{\to}Y_1^\top(I_{a}^{-1}(\alpha_0;\alpha_0)-H_1) I_a(\alpha_0;\alpha_0)(I_{a}^{-1}(\alpha_0;\alpha_0)-H_1)Y_1\\
&\sim \chi_{r_1}^2\quad(\text{under}\ H_0^{(1)}).
\end{align*}
Moreover, we check
$$I_{b,n}(\hat{\beta}_n|\hat{\alpha}_n)\overset{P}{\to}I_b(\theta_0;\theta_0)\quad (\text{under}\ H_0^{(2)}),$$
and it holds
\begin{align*}
W_n^{(2)}&=\left(\sqrt{nh_n}(\hat{\beta}_n-\tilde{\beta}_n)\right)^\top I_{b,n}(\hat{\beta}_n|\hat{\alpha}_n)\sqrt{nh_n}(\hat{\beta}_n-\tilde{\beta}_n)\\
&\overset{d}{\to}Y_2^\top(I_{b}^{-1}(\theta_0;\theta_0)-H_2)I_b(\theta_0;\theta_0)(I_{b}^{-1}(\theta_0;\theta_0)-H_2)Y_2\\
&\sim \chi_{r_2}^2\quad(\text{under}\ H_0^{(2)}).
\end{align*}
This completes the proof of the Wald type.

$(\mathrm{iii})$ From the proof of Lemma $\ref{lem:1}$, 
it holds
$$
\frac{1}{\sqrt{n}}\partial_\alpha U_n^{(1)}(\tilde{\alpha}_n)\overset{d}{\to}(E_{p_1}-I_a(\alpha_0;\alpha_0)H_1)Y_1\quad(\text{under}\ H_0^{(1)}).
$$
It follows that
$$
 \bar{I}_{a,n}(\hat{\alpha}_n)\overset{P}{\to}I_{a}^{-1}(\alpha_0;\alpha_0)\quad(\text{under}\ H_0^{(1)}).
$$
Hence we obtain from the continuous mapping theorem that
\begin{align*}
R_n^{(1)}&=\left(\frac{1}{\sqrt{n}}\partial_{\alpha}U_n^{(1)}(\tilde{\alpha}_n)\right)^\top \bar{I}_{a,n}(\hat{\alpha}_n)\frac{1}{\sqrt{n}}\partial_{\alpha}U_n^{(1)}(\tilde{\alpha}_n)\\
&\overset{d}{\to}Y_1^\top(E_{p_1}-I_a(\alpha_0;\alpha_0)H_1)^\top I_a^{-1}(\alpha_0;\alpha_0)(E_{p_1}-I_a(\alpha_0;\alpha_0)H_1)Y_1\\
&=Y_1^\top(E_{p_1}-I_a(\alpha_0;\alpha_0)H_1)^\top I_a^{-1}(\alpha_0;\alpha_0)I_a(\alpha_0;\alpha_0)I_a^{-1}(\alpha_0;\alpha_0)(E_{p_1}-I_a(\alpha_0;\alpha_0)H_1)Y_1\\
&=Y_1^\top(I_{a}^{-1}(\alpha_0;\alpha_0)-H_1)I_a(\alpha_0;\alpha_0)(I_{a}^{-1}(\alpha_0;\alpha_0)-H_1)Y_1\\
&\sim \chi_{r_1}^2 \quad(\text{under}\ H_0^{(1)}).
\end{align*}
Furthermore, it follows from the proof of Lemma $\ref{lem:1}$ that
$$
\frac{1}{\sqrt{nh_n}}\partial_{\beta}U_n^{(2)}(\tilde{\beta}_n|\hat{\alpha}_n)\overset{d}{\to}(E_{p_2}-I_b(\theta_0,\theta_0)H_2)Y_2\quad(\text{under}\ H_0^{(2)}),
$$
and we can check
$$
 \bar{I}_{b,n}(\hat{\beta}_n|\hat{\alpha}_n)\overset{P}{\to}I_{b}^{-1}(\theta_0;\theta_0)\quad(\text{under}\ H_0^{(2)}).
$$
Thus, one has
\begin{align*}
R_n^{(2)}&=\left(\frac{1}{\sqrt{nh_n}}\partial_{\beta}U_n^{(2)}(\tilde{\beta}_n|\hat{\alpha}_n)\right)^\top \bar{I}_{b,n}(\hat{\beta}_n|\hat{\alpha}_n)\frac{1}{\sqrt{nh_n}}\partial_{\beta}U_n^{(2)}(\tilde{\beta}_n|\hat{\alpha}_n)\\
&\overset{d}{\to}Y_2^\top(E_{p_2}-I_b(\theta_0;\theta_0)H_2)^\top I_b^{-1}(\theta_0;\theta_0)(E_{p_2}-I_b(\theta_0;\theta_0)H_2)Y_2\\
&=Y_2^\top(E_{p_2}-I_b(\theta_0;\theta_0)H_2)^\top I_b^{-1}(\theta_0;\theta_0)I_b(\theta_0;\theta_0)I_b^{-1}(\theta_0;\theta_0)(E_{p_2}-I_b(\theta_0;\theta_0)H_2)Y_2\\
&=Y_2^\top(I_{b}^{-1}(\theta_0;\theta_0)-H_2)I_b(\theta_0;\theta_0)(I_{b}^{-1}(\theta_0;\theta_0)-H_2)Y_2\\
&\sim \chi_{r_2}^2\quad(\text{under}\ H_0^{(2)}).
\end{align*}
This completes the proof of the Rao type.
\end{proof}

Next,  
we will prove the following lemma
to show Theorem $\ref{thm:2}$. 

\begin{lemma}\label{lem:2}
Assume $\mathbf{A1\mathchar`-A7}$ and $\mathbf{B1}$.\ If $h_n\to0$ and $nh_n\to\infty$, then
\begin{enumerate}
\item[$(\mathrm{i})$] $\tilde{\alpha}_n\overset{P}{\to}\alpha^*\quad(\text{under}\ H_1^{(1)}),$
\item[$(\mathrm{ii})$] $\tilde{\beta}_n\overset{P}{\to}\beta^*\quad(\text{under}\ H_1^{(2)}).$
\end{enumerate}
\end{lemma}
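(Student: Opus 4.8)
The plan is to treat both assertions as standard consistency statements for argmax estimators: establish uniform convergence in probability of a suitably normalized contrast field to a deterministic limit, invoke the well-separatedness of that limit's maximizer supplied by $\mathbf{B1}$, and conclude by the argmax theorem (see van der Vaart \cite{van_der_Vaart}). The two parts differ in the normalization required and, for $(\mathrm{ii})$, in the presence of the plug-in estimator $\hat{\alpha}_n$.

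For $(\mathrm{i})$, working under $H_1^{(1)}$ so that the data are generated by $\theta_1^*$, I would first prove
\begin{align*}
\sup_{\alpha\in\tilde{\Theta}_\alpha}\left|\frac{1}{n}U_n^{(1)}(\alpha)-\bar{U}_1(\alpha;\alpha_1^*)\right|\overset{P}{\to}0.
\end{align*}
Since $h_n^{-1}(X_{t_i^n}-X_{t_{i-1}^n})^{\otimes2}$ has conditional mean $S(X_{t_{i-1}^n},\alpha_1^*)+O(h_n)$, the drift contributing only $O(h_n)$, each summand of $\frac{1}{n}U_n^{(1)}(\alpha)$ is approximated by $-\frac{1}{2}\{\mathrm{tr}(S^{-1}(X_{t_{i-1}^n},\alpha)S(X_{t_{i-1}^n},\alpha_1^*))+\log\det S(X_{t_{i-1}^n},\alpha)\}$, and the ergodic theorem under $\mathbf{A2}$, combined with the nondegeneracy $\mathbf{A3}$, the moment bound $\mathbf{A4}$ and the smoothness and polynomial-growth conditions $\mathbf{A5}$, yields the pointwise limit $\bar{U}_1(\alpha;\alpha_1^*)$; compactness of $\tilde{\Theta}_\alpha$ upgrades this to the displayed uniform convergence. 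This is the limit already established for the adaptive quasi-likelihood in Uchida and Yoshida \cite{Uchida_2012}, now with the general true value $\theta_1^*$. Because $\tilde{\alpha}_n$ maximizes $U_n^{(1)}$ over $\tilde{\Theta}_\alpha$ and $\alpha^*\in\tilde{\Theta}_\alpha$, the near-maximization requirement holds automatically, so uniform convergence together with the separation $\mathbf{B1}$-$(\mathrm{a})$ gives $\tilde{\alpha}_n\overset{P}{\to}\alpha^*$.

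For $(\mathrm{ii})$ the naive normalization $\frac{1}{nh_n}U_n^{(2)}(\beta|\bar{\alpha})$ fails to converge: the quadratic-variation contribution is $O(1/h_n)$ and, being independent of $\beta$, diverges. The remedy is to center at the true drift value $\beta_1^*$. Writing $X_{t_i^n}-X_{t_{i-1}^n}=h_n b(X_{t_{i-1}^n},\beta_1^*)+\zeta_i+\cdots$ with $\zeta_i:=a(X_{t_{i-1}^n},\alpha_1^*)(W_{t_i^n}-W_{t_{i-1}^n})$ and $D_i:=b(X_{t_{i-1}^n},\beta_1^*)-b(X_{t_{i-1}^n},\beta)$, the identity $(\zeta_i+h_nD_i)^{\otimes2}-\zeta_i^{\otimes2}=h_n^2D_i^{\otimes2}+h_n(\zeta_iD_i^\top+D_i\zeta_i^\top)$ cancels the divergent $\zeta_i^{\otimes2}$ term and leaves
\begin{align*}
\frac{1}{nh_n}\left(U_n^{(2)}(\beta|\bar{\alpha})-U_n^{(2)}(\beta_1^*|\bar{\alpha})\right)=-\frac{1}{2n}\sum_{i=1}^nS^{-1}(X_{t_{i-1}^n},\bar{\alpha})[D_i^{\otimes2}]-\frac{1}{2nh_n}\sum_{i=1}^nS^{-1}(X_{t_{i-1}^n},\bar{\alpha})[\zeta_iD_i^\top+D_i\zeta_i^\top].
\end{align*}
By the ergodic theorem the first sum tends to $\bar{U}_2(\bar{\alpha},\beta;\beta_1^*)$ (recall $\bar{U}_2(\bar{\alpha},\beta_1^*;\beta_1^*)=0$), while the second is a sum of martingale differences whose predictable quadratic characteristic is $O(nh_n)$, hence of order $O_P((nh_n)^{-1/2})=o_P(1)$ because $nh_n\to\infty$; I would establish this convergence uniformly in $(\bar{\alpha},\beta)$ over the compact set $\Theta_\alpha\times\tilde{\Theta}_\beta$.

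Finally, since the centering term $U_n^{(2)}(\beta_1^*|\bar{\alpha})$ does not depend on $\beta$, $\tilde{\beta}_n$ also maximizes the centered field. The plug-in is handled by the consistency $\hat{\alpha}_n\overset{P}{\to}\alpha_1^*$, which follows from the same argmax reasoning applied over the full $\Theta_\alpha$: the map $\alpha\mapsto\bar{U}_1(\alpha;\alpha_1^*)$ attains its unique maximum at $\alpha_1^*$ by the identifiability $\mathbf{A6}$. Combining $\hat{\alpha}_n\overset{P}{\to}\alpha_1^*$ with the continuity of $\bar{U}_2(\cdot,\beta;\beta_1^*)$ and the uniformity in $\bar{\alpha}$ lets us replace $\hat{\alpha}_n$ by $\alpha_1^*$ in the limit, giving the criterion $\beta\mapsto\bar{U}_2(\alpha_1^*,\beta;\beta_1^*)$, whose maximizer $\beta^*$ is well separated by $\mathbf{B1}$-$(\mathrm{b})$; the argmax theorem then yields $\tilde{\beta}_n\overset{P}{\to}\beta^*$. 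The main obstacle is precisely this joint treatment of the random plug-in and the martingale remainder: one must secure uniform-in-$\bar{\alpha}$ (not merely pointwise) convergence so that substituting the estimator is legitimate, and verify that the $o_P(1)$ martingale bound holds uniformly over $\Theta_\alpha\times\tilde{\Theta}_\beta$, for which the moment and growth estimates of $\mathbf{A3}$--$\mathbf{A5}$ and the ergodic machinery of \cite{Uchida_2012} are needed.
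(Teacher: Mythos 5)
Your proposal is correct and follows essentially the same route as the paper: uniform convergence of $\tfrac{1}{n}U_n^{(1)}$ to $\bar{U}_1$ for part $(\mathrm{i})$, and for part $(\mathrm{ii})$ the same centering $L_n(\alpha,\beta)=\tfrac{1}{nh_n}\bigl(U_n^{(2)}(\beta|\alpha)-U_n^{(2)}(\beta_1^*|\alpha)\bigr)$ with uniform convergence to $\bar{U}_2$, combined with the separation conditions $\mathbf{B1}$ and the argmax argument. The only cosmetic difference is that the paper replaces $\hat{\alpha}_n$ by $\alpha_1^*$ via a Taylor expansion of $L_n$ in $\alpha$ with $\sup_\theta|\partial_\alpha L_n|=O_P(1)$, whereas you invoke uniform-in-$\alpha$ convergence plus continuity of the limit; these are interchangeable.
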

\begin{proof}[Proof]\ $(\mathrm{i})$ From the definition of $\bar{U}_1(\alpha;\alpha_1^*)$, it holds
$$
\sup_{\alpha\in\Theta_\alpha}\left|\frac{1}{n}U_n^{(1)}(\alpha)-\bar{U}_1(\alpha;\alpha_1^*)\right|\overset{P}{\to}0\quad(\text{under}\ H_1^{(1)}).
$$
$\mathbf{B1}$-(a) implies the following : for all $\varepsilon>0$, there exists $\delta>0$ such that for all $\alpha\in\tilde{\Theta}_\alpha$,
$$|\alpha-\alpha^*|\geq \varepsilon\  \Longrightarrow\  \bar{U}_1(\alpha^*;\alpha_1^*)-\bar{U}_1(\alpha;\alpha_1^*)>\delta.$$
Therefore we obtain
\begin{align*}
0&\leq P\left(|\tilde{\alpha}_n-\alpha^*|\geq \varepsilon\right)\\
&\leq P\left(\bar{U}_1(\alpha^*;\alpha_1^*)-\bar{U}_1(\tilde{\alpha}_n;\alpha_1^*)>\delta\right)\\
&=P\left(\bar{U}_1(\alpha^*;\alpha_1^*)-\frac{1}{n}U_n^{(1)}(\alpha^*)+\frac{1}{n}U_n^{(1)}(\alpha^*)-\frac{1}{n}U_n^{(1)}(\tilde{\alpha}_n)+\frac{1}{n}U_n^{(1)}(\tilde{\alpha}_n)-\bar{U}_1(\tilde{\alpha}_n;\alpha_1^*)>\delta\right)\\
&\leq 2P\left(\sup_{\alpha\in\Theta_\alpha}\left|\frac{1}{n}U_n^{(1)}(\alpha)-\bar{U}_1(\alpha;\alpha_1^*)\right|>\frac{\delta}{3}\right)+P\left(\frac{1}{n}U_n^{(1)}(\alpha^*)-\frac{1}{n}U_n^{(1)}(\tilde{\alpha}_n)>\frac{\delta}{3}\right)\\
&\to 0\ (n\to\infty),
\end{align*}
which implies the statement of $(\mathrm{i})$.

$(\mathrm{ii})$ For sake of simplicity, $\alpha_1^*$ denotes the true value of $\alpha$ under $H_1^{(2)}$.
From the definition of $\bar{U}_2(\alpha,\beta;\beta_1^*)$, it holds
$$
\sup_{\theta\in\Theta}\left|L_n(\alpha,\beta)-\bar{U}_2(\alpha,\beta;\beta_1^*)\right|\overset{P}{\to}0\quad(\text{under}\ H_1^{(2)}),
$$
where
$$
L_n(\alpha,\beta)=\frac{1}{nh_n}U_n^{(2)}(\beta|\alpha)-\frac{1}{nh_n}U_n^{(2)}(\beta_1^*|\alpha). 
$$
$\mathbf{B1}$-(b) implies the following : for all $\varepsilon>0$, there exists $\delta>0$ such that for all $\beta\in\tilde{\Theta}_\beta$,
$$|\beta-\beta^*|\geq \varepsilon\  \Longrightarrow\  \bar{U}_2(\alpha_1^*,\beta^*;\beta_1^*)-\bar{U}_2(\alpha_1^*,\beta;\beta_1^*)>\delta.$$
Hence it holds
\begin{align*}
0&\leq P\left(|\tilde{\beta}_n-\beta^*|\geq \varepsilon\right)\\
&\leq P\left(\bar{U}_2(\alpha_1^*,\beta^*;\beta_1^*)-\bar{U}_2(\alpha_1^*,\tilde{\beta}_n;\beta_1^*)>\delta\right)\\
&=P\left(\bar{U}_2(\alpha_1^*,\beta^*;\beta_1^*)-L_n(\hat{\alpha}_n,\beta^*)+L_n(\hat{\alpha}_n,\beta^*)-L_n(\hat{\alpha}_n,\tilde{\beta}_n)+L_n(\hat{\alpha}_n,\tilde{\beta}_n)-\bar{U}_2(\alpha_1^*,\tilde{\beta}_n;\beta_1^*)>\delta\right)\\
&\leq 2P\left(\sup_{\beta\in\Theta_\beta}\left|L_n(\hat{\alpha}_n,\beta)-\bar{U}_2(\alpha_1^*,\beta;\beta_1^*)\right|>\frac{\delta}{3}\right)+P\left(L_n(\hat{\alpha}_n,\beta^*)-L_n(\hat{\alpha}_n,\tilde{\beta}_n)>\frac{\delta}{3}\right)\\
&=2P\left(\sup_{\beta\in\Theta_\beta}\left|L_n(\hat{\alpha}_n,\beta)-\bar{U}_2(\alpha_1^*,\beta;\beta_1^*)\right|>\frac{\delta}{3}\right),
\end{align*}
where
\begin{align}
P\left(\sup_{\beta\in\Theta_\beta}\left|L_n(\hat{\alpha}_n,\beta)-\bar{U}_2(\alpha_1^*,\beta;\beta_1^*)\right|>\frac{\delta}{3}\right)&\leq
P\left(\sup_{\beta\in\Theta_\beta}\left|L_n(\hat{\alpha}_n,\beta)-L_n(\alpha_1^*,\beta)\right|>\frac{\delta}{6}\right)
\nonumber \\
&\quad+P\left(\sup_{\beta\in\Theta_\beta}\left|L_n(\alpha_1^*,\beta)-\bar{U}_2(\alpha_1^*,\beta;\beta_1^*)\right|>\frac{\delta}{6}\right)
\nonumber \\
&\leq P\left(\sup_{\beta\in\Theta_\beta}\left|L_n(\hat{\alpha}_n,\beta)-L_n(\alpha_1^*,\beta)\right|>\frac{\delta}{6}\right)
\nonumber \\
&\quad+P\left(\sup_{\theta\in\Theta}\left|L_n(\alpha,\beta)-\bar{U}_2(\alpha,\beta;\beta_1^*)\right|>\frac{\delta}{6}\right).
\label{uchida-proof-1}
\end{align}
The second term on the right hand side 
of the inequality \eqref{uchida-proof-1}
converges to 0 under $H_0^{(2)}$. 
It follows from Taylor's theorem that
\begin{align*}
\left|L_n(\hat{\alpha}_n,\beta)-L_n(\alpha_1^*,\beta)\right|&=\left|\int_{0}^{1}\partial_\alpha L_n(\alpha_1^*+u(\hat{\alpha}_n-\alpha_1^*),\beta)du(\hat{\alpha}_n-\alpha_1^*)\right|\\
&\leq \int_{0}^{1}{\left|\partial_\alpha L_n(\alpha_1^*+u(\hat{\alpha}_n-\alpha_1^*),\beta)\right|}du\left|\hat{\alpha}_n-\alpha_1^*\right|\\
&\leq \sup_{\theta\in\Theta}\left|\partial_\alpha L_n(\alpha,\beta)\right|\left|\hat{\alpha}_n-\alpha_1^*\right|,
\end{align*}
and it is easy to check
$$
\left|\hat{\alpha}_n-\alpha_1^*\right|=o_P(1),\ \quad \sup_{\theta\in\Theta}\left|\partial_\alpha L_n(\alpha,\beta)\right|=O_P(1).
$$
Therefore, 
the first term on the right hand side 
of the inequality \eqref{uchida-proof-1}
converges 
to 0, which implies
$
\tilde{\beta}_n\overset{P}{\to}\beta^*.
$
\end{proof}
\begin{proof}[Proof of Theorem 2]\ $(\mathrm{i})$ 
Since  Lemma \ref{lem:2} implies
that under $H_1^{(1)}$, 
$$
\hat{\alpha}_n\overset{P}{\to}\alpha_1^*,\quad \tilde{\alpha}_n\overset{P}{\to}\alpha^*\neq\alpha_1^*,
$$
$$
\sup_{\alpha\in\Theta_\alpha}\left|\frac{1}{n}U_n^{(1)}(\alpha)-\bar{U}_1(\alpha;\alpha_1^*)\right|\overset{P}{\to}0,
$$
we obtain
\begin{align*}
\frac{1}{n}\Lambda_n^{(1)}&=\frac{2}{n}(U_n^{(1)}(\hat{\alpha}_n)-U_n^{(1)}(\tilde{\alpha}_n))\\
&\overset{P}{\to}2(\bar{U}_1(\alpha_1^*;\alpha_1^*)-\bar{U}_1(\alpha^*;\alpha_1^*))>0.
\end{align*}
Therefore, for all $\varepsilon\in (0,1)$,
$$
0\leq P\left(\Lambda_n^{(1)}\leq\chi_{r_1,\varepsilon}^2\right)=P\left(\frac{1}{n}\Lambda_n^{(1)}\leq\frac{\chi_{r_1,\varepsilon}^2}{n}\right)\to0\quad(\text{under}\ H_1^{(1)},\ n\to\infty),
$$
which implies the first statement of $(\mathrm{i})$.

Next, we consider $\Lambda_n^{(2)}$ under $H_1^{(2)}$. It holds
\begin{align*}
\frac{1}{nh_n}\Lambda_n^{(2)}&=\frac{2}{nh_n}(U_n^{(2)}(\hat{\beta}_n|\hat{\alpha}_n)-U_n^{(2)}(\tilde{\beta}_n|\hat{\alpha}_n))\\
&=\frac{2}{nh_n}(U_n^{(2)}(\hat{\beta}_n|\hat{\alpha}_n)-U_n^{(2)}(\beta_1^*|\hat{\alpha}_n))+\frac{2}{nh_n}(U_n^{(2)}(\beta_1^*|\hat{\alpha}_n)-U_n^{(2)}(\tilde{\beta}_n|\hat{\alpha}_n))\\
&\geq \frac{2}{nh_n}(U_n^{(2)}(\beta_1^*|\hat{\alpha}_n)-U_n^{(2)}(\tilde{\beta}_n|\hat{\alpha}_n)).
\end{align*}
Lemma \ref{lem:2} implies that under $H_1^{(2)}$,
$$
\hat{\alpha}_n\overset{P}{\to}\alpha_1^*,\quad \tilde{\beta}_n\overset{P}{\to}\beta^*\neq\beta_1^*,
$$
$$
\sup_{\theta\in\Theta}\left|L_n(\alpha,\beta)-\bar{U}_2(\alpha,\beta;\beta_1^*)\right|\overset{P}{\to}0.
$$
Noting that $\bar{U}_2(\alpha_1^*,\beta_1^*;\beta_1^*)=0$ 
and $L_n(\alpha,\beta)=\frac{1}{nh_n}(U_n^{(2)}(\beta|\alpha)-U_n^{(2)}(\beta_1^*|\alpha))
$, 
one has
$$
\frac{2}{nh_n}(U_n^{(2)}(\beta_1^*|\hat{\alpha}_n)-U_n^{(2)}(\tilde{\beta}_n|\hat{\alpha}_n))
\overset{P}{\to}-2\bar{U}_2(\alpha_1^*,\beta^*;\beta_1^*)>0. 
$$
Therefore, for all $\varepsilon\in (0,1)$,
\begin{align*}
0\leq P\left(\Lambda_n^{(2)}\leq\chi_{r_2,\varepsilon}^2\right)&=P\left(\frac{1}{nh_n}\Lambda_n^{(2)}\leq\frac{\chi_{r_2,\varepsilon}^2}{nh_n}\right)\\
&\leq P\left(\frac{2}{nh_n}(U_n^{(2)}(\beta_1^*|\hat{\alpha}_n)-U_n^{(2)}(\tilde{\beta}_n|\hat{\alpha}_n))\leq\frac{\chi_{r_2,\varepsilon}^2}{nh_n}\right)\\
&\to0\quad(\text{under}\ H_1^{(2)},\ n\to\infty).
\end{align*}
This completes the proof of the Likelihood type.

$(\mathrm{ii})$ 
We have that under $H_1^{(1)}$,
\begin{align*}
&\hat{\alpha}_n-\tilde{\alpha}_n\overset{P}{\to}\alpha_1^*-\alpha^*\neq0,\\
&I_{a,n}(\hat{\alpha}_n)\overset{P}\to{}I_a(\alpha_1^*;\alpha_1^*).
\end{align*}
Hence 
\begin{align*}
\frac{1}{n}W_n^{(1)}&=(\hat{\alpha}_n-\tilde{\alpha}_n)^\top I_{a,n}(\hat{\alpha}_n)(\hat{\alpha}_n-\tilde{\alpha}_n)\\
&\overset{P}{\to}(\alpha_1^*-\alpha^*)^\top I_a(\alpha_1^*;\alpha_1^*)(\alpha_1^*-\alpha^*)>0
\end{align*}
because of $\mathbf{A7}$. Therefore, for all $\varepsilon\in (0,1)$,
$$
P(W_n^{(1)}\geq\chi^2_{{r_1,\varepsilon}})=P\left(\frac{1}{n}W_n^{(1)}\geq\frac{\chi^2_{{r_1,\varepsilon}}}{n}\right)\to 1\quad(\text{under}\ H_1^{(1)}).
$$
Moreover,  Lemma \ref{lem:2} implies that under $H_1^{(2)}$,
\begin{align*}
&\hat{\beta}_n-\tilde{\beta}_n\overset{P}{\to}\beta_1^*-\beta^*\neq0\\
&I_{b,n}(\hat{\beta}_n|\hat{\alpha}_n)\overset{P}\to{}I_b(\theta_1^*;\theta_1^*).
\end{align*}
Hence
\begin{align*}
\frac{1}{nh_n}W_n^{(2)}&=(\hat{\beta}_n-\tilde{\beta}_n)^\top I_{b,n}(\hat{\beta}_n|\hat{\alpha}_n)(\hat{\beta}_n-\tilde{\beta}_n)\\
&\overset{P}{\to}(\beta_1^*-\beta^*)^\top I_b(\theta_1^*;\theta_1^*)(\beta_1^*-\beta^*)>0.
\end{align*}
Therefore, for all $\varepsilon\in (0,1)$,
$$
P(W_n^{(2)}\geq\chi^2_{{r_2,\varepsilon}})=P\left(\frac{1}{nh_n}W_n^{(2)}\geq\frac{\chi^2_{{r_2,\varepsilon}}}{nh_n}\right)\to 1\quad(\text{under}\ H_1^{(2)}).
$$
This completes the proof of the Wald type.

$(\mathrm{iii})$ From Taylor's theorem, it holds
$$
\frac{1}{n}\partial_\alpha U_n^{(1)}(\tilde{\alpha}_n)=-\int_{0}^{1}{\frac{1}{n}\partial_\alpha^2U_n^{(1)}(\hat{\alpha}_n+u(\tilde{\alpha}_n-\hat{\alpha}_n))}du(\hat{\alpha}_n-\tilde{\alpha}_n).
$$
It is shown that under $H_1^{(1)}$,
\begin{align*}
\hat{\alpha}_n-\tilde{\alpha}_n&\overset{P}{\to}\alpha_1^*-\alpha^*\neq0,\\
-\int_{0}^{1}{\frac{1}{n}\partial_\alpha^2U_n^{(1)}(\hat{\alpha}_n+u(\tilde{\alpha}_n-\hat{\alpha}_n))}du&\overset{P}{\to}
\int_{0}^{1}{I_a(\alpha_1^*+u(\alpha^*-\alpha_1^*);\alpha_1^*)}du.
\end{align*}
Hence it follows from $\mathbf{B2}$-(a) that
$$
\frac{1}{n}\partial_\alpha U_n^{(1)}(\tilde{\alpha}_n)\overset{P}{\to}\int_{0}^{1}{I_a(\alpha_1^*+u(\alpha^*-\alpha_1^*);\alpha_1^*)}du(\alpha_1^*-\alpha^*)=:c_1\neq0,
$$
and
\begin{align*}
\frac{1}{n}R_n^{(1)}&=\left(\frac{1}{n}\partial_{\alpha}U_n^{(1)}(\tilde{\alpha}_n)\right)^\top \bar{I}_{a,n}(\hat{\alpha}_n)\frac{1}{n}\partial_{\alpha}U_n^{(1)}(\tilde{\alpha}_n)\\
&\overset{P}{\to}c_1^\top I_a^{-1}(\alpha_1^*,\alpha_1^*)c_1>0.
\end{align*}
Therefore, for all $\varepsilon\in (0,1)$,
$$
P(R_n^{(1)}\geq\chi^2_{{r_1,\varepsilon}})=P\left(\frac{1}{n}R_n^{(1)}\geq\frac{\chi^2_{{r_1,\varepsilon}}}{n}\right)\to 1\quad(\text{under}\ H_1^{(1)}).
$$
Furthermore, 
it follows from Taylor's theorem with respect to $\beta$ that
under $H_1^{(2)}$, 
$$
\frac{1}{nh_n}\partial_\beta U_n^{(2)}(\tilde{\beta}_n|\hat{\alpha}_n)=-\int_{0}^{1}{\frac{1}{nh_n}\partial_\beta^2U_n^{(2)}(\hat{\beta}_n+u(\tilde{\beta}_n-\hat{\beta}_n)|\hat{\alpha}_n)}du(\hat{\beta}_n-\tilde{\beta}_n),
$$
and we have
\begin{align*}
\hat{\beta}_n-\tilde{\beta}_n&\overset{P}{\to}\beta_1^*-\beta^*\neq0,\\
-\int_{0}^{1}{\frac{1}{nh_n}\partial_\beta^2U_n^{(2)}(\hat{\beta}_n+u(\tilde{\beta}_n-\hat{\beta}_n)|\hat{\alpha}_n))}du&\overset{P}{\to}
\int_{0}^{1}{I_b((\alpha_1^*,\beta_1^*+u(\beta^*-\beta_1^*));\theta_1^*)}du.
\end{align*}
Hence it follows from $\mathbf{B2}$-(b) that
$$
\frac{1}{nh_n}\partial_\beta U_n^{(2)}(\tilde{\beta}_n|\hat{\alpha}_n)\overset{P}{\to}\int_{0}^{1}{I_b((\alpha_1^*,\beta_1^*+u(\beta^*-\beta_1^*));\theta_1^*)}du(\beta_1^*-\beta^*)=:c_2\neq0,
$$
and
\begin{align*}
\frac{1}{nh_n}R_n^{(2)}&=\left(\frac{1}{nh_n}\partial_{\beta}U_n^{(2)}(\tilde{\beta}_n|\hat{\alpha}_n)\right)^\top \bar{I}_{b,n}(\hat{\beta}_n|\hat{\alpha}_n)\frac{1}{nh_n}\partial_{\beta}U_n^{(2)}(\tilde{\beta}_n|\hat{\alpha}_n)\\
&\overset{P}{\to}c_2^\top I_b^{-1}(\theta_1^*,\theta_1^*)c_2>0.
\end{align*}
Therefore, for all $\varepsilon\in (0,1)$,
$$
P(R_n^{(2)}\geq\chi^2_{{r_2,\varepsilon}})=P\left(\frac{1}{nh_n}R_n^{(2)}\geq\frac{\chi^2_{{r_2,\varepsilon}}}{nh_n}\right)\to 1\quad(\text{under}\ H_1^{(2)}).
$$
This completes the proof of the Rao type.
\end{proof}

Finally, we will prove Theorem \ref{thm:3}. 
In order to proof this theorem, we use the following lemma.
\begin{lemma}\label{lem:3}
Assume $\mathbf{A1\mathchar`-A7}$ and $\mathbf{C1}$. If $h_n\to0,\ nh_n\to\infty$ and $nh_n^2\to0$, then
\begin{enumerate}
\item[$(\mathrm{i})$] $\sqrt{n}(\hat{\alpha}_n-\alpha_0)\overset{d}{\to}Y_1'\sim N(u_\alpha,I_a^{-1}(\alpha_0;\alpha_0))\quad(\text{under}\ H_{1,n}^{(1)}),$
\item[$(\mathrm{ii})$] $\sqrt{nh_n}(\hat{\beta}_n-\beta_0)\overset{d}{\to}Y_2'\sim N(u_\beta,I_b^{-1}(\theta_0;\theta_0))\quad(\text{under}\ H_{1,n}^{(2)}).$
\end{enumerate}
\end{lemma}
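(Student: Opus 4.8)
The plan is to reduce the statement under the local alternative to the null-hypothesis asymptotics already available for the adaptive estimator, using the contiguity assumption $\mathbf{C1}$ to carry negligible terms from $P_{\theta_0}$ to $P_{\theta_{1,n}^*}$ and a Taylor expansion around the moving true value to generate the mean shift. For $(\mathrm{i})$ I would first record the stochastic expansion obtained (exactly as in the proof of Lemma \ref{lem:1} and in the adaptive theory of Uchida and Yoshida \cite{Uchida_2012}), valid under $P_{\theta_0}$ because $\partial_\alpha U_n^{(1)}(\hat{\alpha}_n)=0$ and $\tilde{I}_{a,n}\overset{P}{\to}-I_a(\alpha_0;\alpha_0)$:
$$\sqrt{n}(\hat{\alpha}_n-\alpha_0)=I_a^{-1}(\alpha_0;\alpha_0)\frac{1}{\sqrt{n}}\partial_\alpha U_n^{(1)}(\alpha_0)+o_{P_{\theta_0}}(1).$$
Since the remainder is $o_{P_{\theta_0}}(1)$ and $P_{\theta_{1,n}^*}$ is contiguous to $P_{\theta_0}$ by $\mathbf{C1}$, the same expansion holds with an $o_{P_{\theta_{1,n}^*}}(1)$ remainder, so by Slutsky's theorem it suffices to identify the limit in distribution of $n^{-1/2}\partial_\alpha U_n^{(1)}(\alpha_0)$ under $P_{\theta_{1,n}^*}$.

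To identify that limit I would expand the quasi-score around the true value $\alpha_{1,n}^*$ by Taylor's theorem:
$$\frac{1}{\sqrt{n}}\partial_\alpha U_n^{(1)}(\alpha_0)=\frac{1}{\sqrt{n}}\partial_\alpha U_n^{(1)}(\alpha_{1,n}^*)+\left(\int_0^1\frac{1}{n}\partial_\alpha^2 U_n^{(1)}(\alpha_{1,n}^*+u(\alpha_0-\alpha_{1,n}^*))\,du\right)\sqrt{n}(\alpha_0-\alpha_{1,n}^*).$$
Here $\sqrt{n}(\alpha_0-\alpha_{1,n}^*)=-u_\alpha$ is deterministic, the integrated Hessian converges in probability to $-I_a(\alpha_0;\alpha_0)$ under $P_{\theta_{1,n}^*}$ (using $\alpha_{1,n}^*\to\alpha_0$ and $\mu_{\theta_{1,n}^*}\to\mu_{\theta_0}$), and the score at the true value satisfies $n^{-1/2}\partial_\alpha U_n^{(1)}(\alpha_{1,n}^*)\overset{d}{\to}N(0,I_a(\alpha_0;\alpha_0))$ under $P_{\theta_{1,n}^*}$. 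Combining these gives $n^{-1/2}\partial_\alpha U_n^{(1)}(\alpha_0)\overset{d}{\to}N(I_a(\alpha_0;\alpha_0)u_\alpha,I_a(\alpha_0;\alpha_0))$, whence $\sqrt{n}(\hat{\alpha}_n-\alpha_0)\overset{d}{\to}I_a^{-1}(\alpha_0;\alpha_0)N(I_a(\alpha_0;\alpha_0)u_\alpha,I_a(\alpha_0;\alpha_0))=N(u_\alpha,I_a^{-1}(\alpha_0;\alpha_0))=Y_1'$.

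Part $(\mathrm{ii})$ follows the same scheme with the rate $\sqrt{nh_n}$, the drift quasi-likelihood $U_n^{(2)}$ and the information $I_b(\theta_0;\theta_0)$. The one additional point is the plug-in of $\hat{\alpha}_n$: as in the proof of Lemma \ref{lem:1}$(\mathrm{ii})$, a further expansion in $\alpha$ shows the cross term $\tilde{I}_{ab,n}\overset{P}{\to}0$, so replacing $\alpha_0$ by $\hat{\alpha}_n$ does not affect the limit, and expanding around the true value $\beta_{1,n}^*$ with $\sqrt{nh_n}(\beta_0-\beta_{1,n}^*)=-u_\beta$ yields $\sqrt{nh_n}(\hat{\beta}_n-\beta_0)\overset{d}{\to}N(u_\beta,I_b^{-1}(\theta_0;\theta_0))=Y_2'$. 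The condition $nh_n^2\to0$ enters here exactly as in the null case, to control the discretization bias of the drift score.

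The main obstacle is the convergence $n^{-1/2}\partial_\alpha U_n^{(1)}(\alpha_{1,n}^*)\overset{d}{\to}N(0,I_a(\alpha_0;\alpha_0))$ (and its $\beta$-analogue) under the \emph{moving} measure $P_{\theta_{1,n}^*}$: this is a martingale-type central limit theorem for a triangular array whose coefficients and invariant law drift with $n$, and one must check that the asymptotic variance stabilizes at $I_a(\alpha_0;\alpha_0)$ given $\alpha_{1,n}^*-\alpha_0=O(n^{-1/2})$. An alternative that confines every computation to $P_{\theta_0}$ is to invoke Le Cam's third lemma: compute the joint limit of $(n^{-1/2}\partial_\alpha U_n^{(1)}(\alpha_0),\log(dP_{\theta_{1,n}^*}/dP_{\theta_0}))$ under $P_{\theta_0}$, and the asymptotic covariance between the quasi-score and the LAN log-likelihood ratio supplies precisely the shift $I_a(\alpha_0;\alpha_0)u_\alpha$; this is the route suggested by the remark following $\mathbf{C1}$ and the references to van der Vaart \cite{van_der_Vaart} and Gobet \cite{Gobet_2002}.
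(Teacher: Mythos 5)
Your overall strategy is sound and reaches the right limit, but you organize the argument differently from the paper. You generate the noncentrality by a pathwise Taylor expansion of the quasi-score between the moving true value $\alpha_{1,n}^*$ and the fixed point $\alpha_0$, so that the shift appears as $(-I_a(\alpha_0;\alpha_0))\cdot\sqrt{n}(\alpha_0-\alpha_{1,n}^*)=I_a(\alpha_0;\alpha_0)u_\alpha$ and the remaining stochastic term is a \emph{centered} CLT for $n^{-1/2}\partial_\alpha U_n^{(1)}(\alpha_{1,n}^*)$ under $P_{\theta_{1,n}^*}$. The paper instead keeps the score evaluated at the fixed $\alpha_0$ throughout and applies the Hall--Heyde martingale CLT directly under $P_{\theta_{1,n}^*}$: the shift emerges from the conditional moments $\mathbb{E}_{\theta_{1,n}^*}[(X_{t_i^n}-X_{t_{i-1}^n})^{\otimes 2}\mid\mathcal{G}_{i-1}^n]=h_nS_{i-1}(\alpha_{1,n}^*)+R_{i-1}(h_n^2)$ followed by a Taylor expansion of $S_{i-1}(\alpha_{1,n}^*)$ in the parameter, and --- this is the payoff of that organization --- every conditional-moment sum is a path functional whose convergence in probability to a constant can be verified under the fixed measure $P_{\theta_0}$ (Kessler's ergodic lemma) and then transferred to $P_{\theta_{1,n}^*}$ by $\mathbf{C1}$. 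What you honestly flag as the ``main obstacle,'' namely the triangular-array CLT for the score at a drifting parameter under a drifting law, is exactly the content the paper's device is built to handle, so your sketch defers rather than closes the hardest step; it is provable by the same Hall--Heyde verification, but you should carry out the conditional-moment computations (including the $nh_n^2\to0$ control of the discretization bias in the $\beta$-score) rather than assert stabilization of the variance. Your opening move --- establishing $\sqrt{n}(\hat{\alpha}_n-\alpha_0)=I_a^{-1}(\alpha_0;\alpha_0)n^{-1/2}\partial_\alpha U_n^{(1)}(\alpha_0)+o_{P_{\theta_0}}(1)$ once and transferring the remainder by contiguity --- is a legitimate economy over the paper, which re-derives the Hessian convergences under the alternative. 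Your Le Cam third lemma alternative is also viable in principle, but note that the paper only assumes contiguity ($\mathbf{C1}$), not LAN, so that route would require the additional joint convergence of the quasi-score with the log-likelihood ratio, which is not available from the stated hypotheses alone.
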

\begin{proof}[Proof]\ $(\mathrm{i})$ 
First of all, 
it is shown that
\begin{align}\label{ggoal:1}
\frac{1}{\sqrt{n}}\partial_\alpha U_n^{(1)}(\alpha_0)\overset{d}{\to}L_1\sim N(I_a(\alpha_0;\alpha_0)u_\alpha,I_a(\alpha_0;\alpha_0))\quad(\text{under}\ H_{1,n}^{(1)}).
\end{align}
For sake of simplicity, we use the following symbols:
\begin{align*}
&X_{{t_i^n}}=(X_{i,1},\ldots,X_{i,d})^\top,\ S(X_{t_{i-1}^n},\alpha)=S_{i-1}(\alpha)=(S_{i-1,pq}(\alpha))_{1\leq p,q\leq d},\ u_\alpha=(u_{\alpha,1},\ldots,u_{\alpha,p_1})^\top,\\
&R_{i-1}(h_n):=R(\theta,h_n,X_{t_{i-1}^n}),\ \mathcal{G}_{i}^n:=\sigma(W_s,s\leq t_{i}^n).
\end{align*}
It follows that
\begin{align*}
U_n^{(1)}(\alpha)&=-\frac{1}{2}\sum_{i=1}^{n}\left\{h_n^{-1}S^{-1}(X_{t_{i-1}^n},\alpha)\left[(X_{t_{i}^n}-X_{t_{i-1}^n})^{\otimes 2}\right]+\log\det S(X_{t_{i-1}^n},\alpha)\right\}\\
&=-\frac{1}{2h_n}\sum_{i=1}^{n}\sum_{p,q}\left\{S^{-1}_{i-1,pq}(\alpha)(X_{i,p}-X_{i-1,p})(X_{i,q}-X_{i-1,q})\right\}-\frac{1}{2}\sum_{i=1}^{n}\log\det S_{i-1}(\alpha).
\end{align*}
For $l_1=1,\ldots, p_1$, we have
$$
(\partial_{\alpha_{l_1}}S^{-1})_{i-1}(\alpha)=-(S^{-1}(\partial_{\alpha_{l_1}}S)S^{-1})_{i-1}(\alpha),\ 
(\partial_{\alpha_{l_1}}\log\det S)_{i-1}(\alpha)=\mathrm{tr}\left((S^{-1}\partial_{\alpha_{l_{1}}}S)_{i-1}(\alpha)\right),
$$
and we set
\begin{align*}
&\partial_{\alpha_{l_1}}U_n^{(1)}(\alpha_0):=\sum_{i=1}^{n}{(\xi_{i,1}^{l_1}+\xi_{i,2}^{l_1})},\\
&\xi_{i,1}^{l_1}:=\frac{1}{2h_n}\sum_{p,q}(S^{-1}(\partial_{\alpha_{l_1}}S)S^{-1})_{i-1,pq}(\alpha_0)(X_{i,p}-X_{i-1,p})(X_{i,q}-X_{i-1,q}),\\
&\xi_{i,2}^{l_1}:=-\frac{1}{2}\mathrm{tr}\left((S^{-1}\partial_{\alpha_{l_{1}}}S)_{i-1}(\alpha_0)\right).
\end{align*}
From Theorems $3.2$ and $3.4$ of Hall and Heyde \cite{hall} , 
it is sufficient to show that under $H_{1,n}^{(1)}$,
\begin{align}
&\sum_{i=1}^{n}\mathbb{E}_{\theta_{1,n}^*}\left[\frac{1}{\sqrt{n}}(\xi_{i,1}^{l_1}+\xi_{i,2}^{l_1})| \mathcal{G}_{i-1}^n\right]\overset{P}{\to}\sum_{j=1}^{p_1}I_a^{(l_1j)}(\alpha_0;\alpha_0)u_{\alpha,j}, \label{MCLT:1}\\
&\sum_{i=1}^{n}\mathbb{E}_{\theta_{1,n}^*}\left[\frac{1}{n}(\xi_{i,1}^{l_1}+\xi_{i,2}^{l_1})(\xi_{i,1}^{l_2}+\xi_{i,2}^{l_2})| \mathcal{G}_{i-1}^n\right]\overset{P}{\to}I_a^{(l_1l_2)}(\alpha_0;\alpha_0), \label{MCLT:2}\\
&\sum_{i=1}^{n}\mathbb{E}_{\theta_{1,n}^*}\left[\frac{1}{\sqrt{n}}(\xi_{i,1}^{l_1}+\xi_{i,2}^{l_1})| \mathcal{G}_{i-1}^n\right]\mathbb{E}_{\theta_{1,n}^*}\left[\frac{1}{\sqrt{n}}(\xi_{i,1}^{l_2}+\xi_{i,2}^{l_2})| 
\mathcal{G}_{i-1}^n\right]\overset{P}{\to}0, \label{MCLT:3}\\
&\sum_{i=1}^{n}\mathbb{E}_{\theta_{1,n}^*}\left[\frac{1}{n^2}(\xi_{i,1}^{l_1}+\xi_{i,2}^{l_1})^4| \mathcal{G}_{i-1}^n\right]\overset{P}{\to}0,\label{MCLT:4}
\end{align}
where $l_1,l_2=1,\ldots,p_1$.

Proof of \eqref{MCLT:1}. 
From the It\^{o}-Taylor expansion and 
Lemma $7$ of Kessler \cite{Kessler_1997}, we have
$$
\mathbb{E}_{\theta_{1,n}^*}[(X_{i,p}-X_{i-1,p})(X_{i,q}-X_{i-1,q})|\mathcal{G}_{i-1}^n]=h_nS_{i-1,pq}(\alpha_{1,n}^*)+R_{i-1}(h_n^2),
$$
and it holds
\begin{align}
\mathbb{E}_{\theta_{1,n}^*}[\xi_{i,1}^{l_1}|\mathcal{G}_{i-1}^n]&=\frac{1}{2h_n}\sum_{p,q}(S^{-1}(\partial_{\alpha_{l_1}}S)S^{-1})_{i-1,pq}(\alpha_0)\mathbb{E}_{\theta_{1,n}^*}[(X_{i,p}-X_{i-1,p})(X_{i,q}-X_{i-1,q})|\mathcal{G}_{i-1}^n]\notag\\
&=\frac{1}{2}\sum_{p,q}(S^{-1}(\partial_{\alpha_{l_1}}S)S^{-1})_{i-1,pq}(\alpha_0)\left(S_{i-1,pq}(\alpha_{1,n}^*)+R_{i-1}(h_n)\right)\notag\\
&=\frac{1}{2}\sum_{p,q}(S^{-1}(\partial_{\alpha_{l_1}}S)S^{-1})_{i-1,pq}(\alpha_0)S_{i-1,pq}(\alpha_{1,n}^*)+R_{i-1}(h_n).\label{ee:1}
\end{align}
By Taylor's theorem, we obtain
\begin{align*}
S_{i-1,pq}(\alpha_{1,n}^*)&=S_{i-1,pq}(\alpha_0)+(\partial_\alpha S)_{i-1,pq}(\alpha_0)(\alpha_{1,n}^*-\alpha_0)\\
&\quad+(\alpha_{1,n}^*-\alpha_0)^\top\left(\int_{0}^{1}{u(\partial_{\alpha}^2S)_{i-1,pq}(\alpha_0+u(\alpha_{1,n}^*-\alpha_0))}du\right)(\alpha_{1,n}^*-\alpha_0)\\
&=S_{i-1, pq}(\alpha_0)+\frac{1}{\sqrt{n}}(\partial_\alpha S)_{i-1,pq}(\alpha_0)u_\alpha\\
&\quad+\frac{1}{n}u_\alpha^\top\left(\int_{0}^{1}{u(\partial_{\alpha}^2S)_{i-1,pq}(\alpha_0+u(\alpha_{1,n}^*-\alpha_0))}du\right) u_\alpha\\
&=S_{i-1, pq}(\alpha_0)+\frac{1}{\sqrt{n}}(\partial_\alpha S)_{i-1, pq}(\alpha_0)u_\alpha+\frac{1}{n}R_{i-1}(1).
\end{align*}
Note that $\displaystyle{\frac{h_n}{\frac{1}{n}}=nh_n>1}$ and $\displaystyle{R_{i-1}(h_n)+\frac{1}{n}R_{i-1}(1)=R_{i-1}(h_n)}$, it follows from \eqref{ee:1} that
\begin{align*}
\mathbb{E}_{\theta_{1,n}^*}[\xi_{i,1}^{l_1}|\mathcal{G}_{i-1}^n]&=\frac{1}{2}\sum_{p,q}(S^{-1}(\partial_{\alpha_{l_1}}S)S^{-1})_{i-1,pq}(\alpha_0)S_{i-1,pq}(\alpha_0)\\
&\quad+\frac{1}{2\sqrt{n}}\sum_{p,q}(S^{-1}(\partial_{\alpha_{l_1}}S)S^{-1})_{i-1,pq}(\alpha_0)(\partial_\alpha S)_{i-1,pq}(\alpha_0)u_\alpha+R_{i-1}(h_n)\\
&=\frac{1}{2}\mathrm{tr}\left((S^{-1}(\partial_{\alpha_{l_1}}S)S^{-1}S)_{i-1}(\alpha_0)\right)\\
&\quad+\frac{1}{2\sqrt{n}}\sum_{p,q}(S^{-1}(\partial_{\alpha_{l_1}}S)S^{-1})_{i-1,pq}(\alpha_0)\left(\sum_{j=1}^{p_1}(\partial_{\alpha_{j}} S)_{i-1,pq}(\alpha_0)\right)u_{\alpha,j}\\
&\quad+R_{i-1}(h_n)\\
&=\frac{1}{2}\mathrm{tr}\left((S^{-1}(\partial_{\alpha_{l_1}}S))_{i-1}(\alpha_0)\right)\\
&\quad+\frac{1}{2\sqrt{n}}\sum_{j=1}^{p_1}\mathrm{tr}\left((S^{-1}(\partial_{\alpha_{l_1}}S)S^{-1}(\partial_{\alpha_{j}}S))_{i-1}(\alpha_0)\right)u_{\alpha,j}+R_{i-1}(h_n).
\end{align*}
On the other hand, we have
\begin{equation}\label{ee:2}
\mathbb{E}_{\theta_{1,n}^*}[\xi_{i,2}^{l_1}|\mathcal{G}_{i-1}^n]=-\frac{1}{2}\mathrm{tr}\left((S^{-1}\partial_{\alpha_{l_{1}}}S)_{i-1}(\alpha_0)\right).\notag
\end{equation}
Hence, it follows from Lemma $8$ of Kessler \cite{Kessler_1997} that under $H_{0,n}^{(1)}$,
\begin{align*}
\sum_{i=1}^{n}\mathbb{E}_{\theta_{1,n}^*}\left[\frac{1}{\sqrt{n}}(\xi_{i,1}^{l_1}+\xi_{i,2}^{l_1})| \mathcal{G}_{i-1}^n\right]&=\frac{1}{n}\sum_{i=1}^{n}\left\{\frac{1}{2}\sum_{j=1}^{p_1}\mathrm{tr}\left((S^{-1}(\partial_{\alpha_{l_1}}S)S^{-1}(\partial_{\alpha_{j}}S))_{i-1}(\alpha_0)\right)u_{\alpha,j}\right\}\\
&\quad+\frac{1}{n}\sum_{i=1}^{n}R_{i-1}(\sqrt{nh_n^2})\\
&\overset{P}{\to}\frac{1}{2}\sum_{i=1}^{p_1}\int\mathrm{tr}\left((S^{-1}(\partial_{\alpha_{l_1}}S)S^{-1}(\partial_{\alpha_{j}}S))(x,\alpha_0)\right) \mu_{\theta_0}(dx)u_{\alpha,j}\\
&=\sum_{i=1}^{p_1}I_a^{(l_1j)}(\alpha_0;\alpha_0)u_{\alpha,j}.
\end{align*}
By setting
$$
Q_n:=\sum_{i=1}^{n}\mathbb{E}_{\theta_{1,n}^*}\left[\frac{1}{\sqrt{n}}(\xi_{i,1}^{l_1}+\xi_{i,2}^{l_1})| \mathcal{G}_{i-1}^n\right]-\sum_{i=1}^{p_1}I_a^{(l_1j)}(\alpha_0;\alpha_0)u_{\alpha,j},
$$
the above convergence implies that for all $\varepsilon>0$,
$$
\lim_{n\to\infty}P_{\theta_0}(|Q_n|\geq \varepsilon)=0.
$$
Hence it follows from $\mathbf{C1}$ that
$$
\lim_{n\to\infty}P_{\theta_{1,n}^*}(|Q_n|\geq \varepsilon)=0,
$$
which implies \eqref{MCLT:1}. 

Proof of \eqref{MCLT:2}. From 
the It\^{o}-Taylor expansion, it holds that for $1\leq p,q,r,s\leq p_1$,
\begin{align*}
&\mathbb{E}_{\theta_{1,n}^*}[(X_{i,p}-X_{i-1,p})(X_{i,q}-X_{i-1,q})(X_{i,r}-X_{i-1,r})(X_{i,s}-X_{i-1,s})|\mathcal{G}_{i-1}^n]\\
&\quad=h_n^2(S_{i-1,pq}(\alpha_{1,n}^*)S_{i-1,rs}(\alpha_{1,n}^*)+S_{i-1,pr}(\alpha_{1,n}^*)S_{i-1,qs}(\alpha_{1,n}^*)+S_{i-1,ps}(\alpha_{1,n}^*)S_{i-1,qr}(\alpha_{1,n}^*))\\
&\quad\quad+R_{i-1}(h_n^3).
\end{align*}
Noting that $\displaystyle{\frac{h_n}{\frac{1}{\sqrt{n}}}=\sqrt{nh_n^2}<1}$, we obtain
\begin{align}
\mathbb{E}_{\theta_{1,n}^*}[\xi_{i,1}^{l_1}\xi_{i,1}^{l_2}|\mathcal{G}_{i-1}^n]&=\frac{1}{4h_n^2}\sum_{p,q,r,s}(\partial_{\alpha_{l_1}}S^{-1})_{i-1,pq}(\alpha_0)(\partial_{\alpha_{l_2}}S^{-1})_{i-1,rs}(\alpha_0)\notag\\
&\quad+\mathbb{E}_{\theta_{1,n}^*}[(X_{i,p}-X_{i-1,p})(X_{i,q}-X_{i-1,q})(X_{i,r}-X_{i-1,r})(X_{i,s}-X_{i-1,s})|\mathcal{G}_{i-1}^n]\notag\\
&=\frac{1}{4}\sum_{p,q,r,s}(\partial_{\alpha_{l_1}}S^{-1})_{i-1,pq}(\alpha_0)(\partial_{\alpha_{l_2}}S^{-1})_{i-1,rs}(\alpha_0)\notag\\
&\quad\times(S_{i-1,pq}(\alpha_{1,n}^*)S_{i-1,rs}(\alpha_{1,n}^*)+S_{i-1,pr}(\alpha_{1,n}^*)S_{i-1,qs}(\alpha_{1,n}^*)\notag\\
&\quad\quad+S_{i-1,ps}(\alpha_{1,n}^*)S_{i-1,qr}(\alpha_{1,n}^*))\notag\\
&\quad+R_{i-1}(h_n)\notag\\
&=\frac{1}{4}\sum_{p,q,r,s}(\partial_{\alpha_{l_1}}S^{-1})_{i-1,pq}(\alpha_0)(\partial_{\alpha_{l_2}}S^{-1})_{i-1,rs}(\alpha_0)\notag\\
&\quad\times(S_{i-1,pq}(\alpha_0)S_{i-1,rs}(\alpha_0)+S_{i-1,pr}(\alpha_0)S_{i-1,qs}(\alpha_0)\notag\\
&\quad\quad+S_{i-1,ps}(\alpha_0)S_{i-1,qr}(\alpha_0))\notag\\
&\quad+\frac{1}{\sqrt{n}}R_{i-1}(1).\label{ee:3}
\end{align}
Since
\begin{align*}
&\sum_{p,q,r,s}(\partial_{\alpha_{l_1}}S^{-1})_{i-1,pq}(\alpha_0)(\partial_{\alpha_{l_2}}S^{-1})_{i-1,rs}(\alpha_0)S_{i-1,pq}(\alpha_0)S_{i-1,rs}(\alpha_0)\\
&\quad=\mathrm{tr}\left(((\partial_{\alpha_{l_1}}S^{-1})S)_{i-1}(\alpha_0)\right)\mathrm{tr}\left(((\partial_{\alpha_{l_2}}S^{-1})S)_{i-1}(\alpha_0)\right),\\
&\sum_{p,q,r,s}(\partial_{\alpha_{l_1}}S^{-1})_{i-1,pq}(\alpha_0)(\partial_{\alpha_{l_2}}S^{-1})_{i-1,rs}(\alpha_0)S_{i-1,pr}(\alpha_0)S_{i-1,qs}(\alpha_0)\\
&\quad=\sum_{p,q,r,s}(\partial_{\alpha_{l_1}}S^{-1})_{i-1,pq}(\alpha_0)(\partial_{\alpha_{l_2}}S^{-1})_{i-1,rs}(\alpha_0)S_{i-1,ps}(\alpha_0)S_{i-1,qr}(\alpha_0)\\
&\quad=\mathrm{tr}\left(((\partial_{\alpha_{l_1}}S^{-1})S(\partial_{\alpha_{l_2}}S^{-1})S)_{i-1}(\alpha_0)\right),
\end{align*}
it follows from \eqref{ee:3} that 
\begin{align*}
\mathbb{E}_{\theta_{1,n}^*}[\xi_{i,1}^{l_1}\xi_{i,1}^{l_2}|\mathcal{G}_{i-1}^n]&\quad=\frac{1}{4}\mathrm{tr}\left(((\partial_{\alpha_{l_1}}S^{-1})S)_{i-1}(\alpha_0)\right)\mathrm{tr}\left(((\partial_{\alpha_{l_2}}S^{-1})S)_{i-1}(\alpha_0)\right)\\
&\quad\quad+\frac{1}{2}\mathrm{tr}\left(((\partial_{\alpha_{l_1}}S^{-1})S(\partial_{\alpha_{l_2}}S^{-1})S)_{i-1}(\alpha_0)\right)+\frac{1}{\sqrt{n}}R_{i-1}(1).
\end{align*}
Moreover, we have
\begin{align*}
\mathbb{E}_{\theta_{1,n}^*}[\xi_{i,2}^{l_1}\xi_{i,2}^{l_2}|\mathcal{G}_{i-1}^n]&=\frac{1}{4}\mathrm{tr}\left((S^{-1}\partial_{\alpha_{l_{1}}}S)_{i-1}(\alpha_0)\right)\mathrm{tr}\left((S^{-1}\partial_{\alpha_{l_{2}}}S)_{i-1}(\alpha_0)\right),\\
\mathbb{E}_{\theta_{1,n}^*}[\xi_{i,1}^{l_1}\xi_{i,2}^{l_2}|\mathcal{G}_{i-1}^n]&=-\frac{1}{4h_n}\mathrm{tr}\left((S^{-1}\partial_{\alpha_{l_{2}}}S)_{i-1}(\alpha_0)\right)\sum_{p,q}(S^{-1}(\partial_{\alpha_{l_1}}S)S^{-1})_{i-1,pq}(\alpha_0)\\
&\quad\times\mathbb{E}_{\theta_{1,n}^*}[(X_{i,p}-X_{i-1,p})(X_{i,q}-X_{i-1,q})|\mathcal{G}_{i-1}^n]\\
&=-\frac{1}{4}\mathrm{tr}\left((S^{-1}\partial_{\alpha_{l_{2}}}S)_{i-1}(\alpha_0)\right)\sum_{p,q}(S^{-1}(\partial_{\alpha_{l_1}}S)S^{-1})_{i-1,pq}(\alpha_0)S_{i-1,pq}(\alpha_0)\\
&\quad+\frac{1}{\sqrt{n}}R_{i-1}(1)\\
&=-\frac{1}{4}\mathrm{tr}\left((S^{-1}\partial_{\alpha_{l_{2}}}S)_{i-1}(\alpha_0)\right)\mathrm{tr}\left((S^{-1}\partial_{\alpha_{l_{1}}}S)_{i-1}(\alpha_0)\right)+\frac{1}{\sqrt{n}}R_{i-1}(1).
\end{align*}
Therefore, it holds that under $H_{0,n}^{(1)}$,  
\begin{align*}
\sum_{i=1}^{n}\mathbb{E}_{\theta_{1,n}^*}\left[\frac{1}{n}(\xi_{i,1}^{l_1}+\xi_{i,2}^{l_1})(\xi_{i,1}^{l_2}+\xi_{i,2}^{l_2})| \mathcal{G}_{i-1}^n\right]&=\frac{1}{n}\sum_{i=1}^{n}\frac{1}{2}\mathrm{tr}\left(((\partial_{\alpha_{l_1}}S^{-1})S(\partial_{\alpha_{l_2}}S^{-1})S)_{i-1}(\alpha_0)\right)\\&\quad+\frac{1}{n}\sum_{i=1}^{n}\frac{1}{\sqrt{n}}R_{i-1}(1)\\
&\overset{P}{\to}\frac{1}{2}\int\mathrm{tr}\left(((\partial_{\alpha_{l_1}}S^{-1})S(\partial_{\alpha_{l_2}}S^{-1})S)(x,\alpha_0)\right)\mu_{\theta_0}(dx)\\
&=\frac{1}{2}\int\mathrm{tr}\left((S^{-1}(\partial_{\alpha_{l_1}}S)S^{-1}(\partial_{\alpha_{l_2}}S))(x,\alpha_0)\right)\mu_{\theta_0}(dx)\\
&=I_a^{(l_1l_2)}(\alpha_0;\alpha_0).
\end{align*}
From $\mathbf{C1}$, this convergence also holds under $H_{1,n}^{(1)}$.

Proof of \eqref{MCLT:3}. From the proof of \eqref{MCLT:1}, we have
\begin{align*}
\mathbb{E}_{\theta_{1,n}^*}\left[\frac{1}{\sqrt{n}}(\xi_{i,1}^{l_1}+\xi_{i,2}^{l_1})| \mathcal{G}_{i-1}^n\right]&=\frac{1}{n}R_{i-1}(1),\\
\mathbb{E}_{\theta_{1,n}^*}\left[\frac{1}{\sqrt{n}}(\xi_{i,1}^{l_2}+\xi_{i,2}^{l_2})| \mathcal{G}_{i-1}^n\right]&=\frac{1}{n}R_{i-1}(1).
\end{align*}
Therefore, it holds that under $H_{0,n}^{(1)}$,
\begin{align*}
\sum_{i=1}^{n}\mathbb{E}_{\theta_{1,n}^*}\left[\frac{1}{\sqrt{n}}(\xi_{i,1}^{l_1}+\xi_{i,2}^{l_1})| \mathcal{G}_{i-1}^n\right]\mathbb{E}_{\theta_{1,n}^*}\left[\frac{1}{\sqrt{n}}(\xi_{i,1}^{l_2}+\xi_{i,2}^{l_2})| \mathcal{G}_{i-1}^n\right]&=\frac{1}{n}\sum_{i=1}^{n}\frac{1}{n}R_{i-1}(1)\overset{P}{\to}0,
\end{align*}
and this also holds under $H_{1,n}^{(1)}$.

Proof of \eqref{MCLT:4}. It is easy to show
$$
\mathbb{E}_{\theta_{1,n}^*}\left[\frac{1}{n^2}(\xi_{i,1}^{l_1}+\xi_{i,2}^{l_1})^4| \mathcal{G}_{i-1}^n\right]\leq\frac{2^3}{n^2}\left(\mathbb{E}_{\theta_{1,n}^*}\left[(\xi_{i,1}^{l_1})^4| \mathcal{G}_{i-1}^n\right]+\mathbb{E}_{\theta_{1,n}^*}\left[(\xi_{i,2}^{l_1})^4| \mathcal{G}_{i-1}^n\right]\right).
$$
We can evaluate
\begin{align}
\mathbb{E}_{\theta_{1,n}^*}\left[(\xi_{i,1}^{l_1})^4| \mathcal{G}_{i-1}^n\right]&=\frac{1}{16h_n^4}\mathbb{E}_{\theta_{1,n}^*}[(\sum_{p,q}(\partial_{\alpha_{l_1}}S^{-1})_{i-1,pq}(\alpha_0)\notag\\
&\quad\quad\quad\quad\quad\quad\quad\times(X_{i,p}-X_{i-1,q})(X_{i,q}-X_{i-1,q}))^4| \mathcal{G}_{i-1}^n]\notag\\
&\leq\frac{C}{16h_n^4}(1+|X_{t_{i-1}^n}|)^C\sum_{1\leq p_1,\ldots p_8\leq d}\mathbb{E}_{\theta_{1,n}^*}\left[\prod_{j=1}^8(X_{i,p_j}-X_{i-1,p_j})|\mathcal{G}_{i-1}^n\right]\notag\\
&\leq R_{i-1}(h_n^{-4})\sum_{1\leq p_1,\ldots p_8\leq d}\prod_{j=1}^8\left(\mathbb{E}_{\theta_{1,n}^*}[(X_{i,p_j}-X_{i-1,p_j})^8|\mathcal{G}_{i-1}^n]\right)^{\frac{1}{8}}\notag\\
&\leq d^8R_{i-1}(h_n^{-4})\mathbb{E}_{\theta_{1,n}^*}[|X_{t_{i}^n}-X_{{t}_{i-1}}^n|^8|\mathcal{G}_{i-1}^n]\notag\\
&\leq R_{i-1}(1),\notag\\
\mathbb{E}_{\theta_{1,n}^*}\left[(\xi_{i,2}^{l_1})^4| \mathcal{G}_{i-1}^n\right]&=\frac{1}{16}\left(\mathrm{tr}\left((S^{-1}\partial_{\alpha_{l_{1}}}S)_{i-1}(\alpha_0)\right)\right)^4\notag\\&=R_{i-1}(1).\notag
\end{align}
Hence
$$
\sum_{i=1}^{n}\frac{2^3}{n^2}\left(\mathbb{E}_{\theta_{1,n}^*}\left[(\xi_{i,1}^{l_1})^4| \mathcal{G}_{i-1}^n\right]+\mathbb{E}_{\theta_{1,n}^*}\left[(\xi_{i,2}^{l_1})^4| \mathcal{G}_{i-1}^n\right]\right)=\frac{1}{n}\sum_{i=1}^{n}\frac{1}{n}R_{i-1}(1)\overset{P}{\to}0,
$$
and
$$
\sum_{i=1}^{n}\mathbb{E}_{\theta_{1,n}^*}\left[\frac{1}{n^2}(\xi_{i,1}^{l_1}+\xi_{i,2}^{l_1})^4| \mathcal{G}_{i-1}^n\right]\overset{P}{\to}0
\quad(\text{under}\ H_{0,n}^{(1)}).
$$
From $\mathbf{C1}$, the last convergence also holds under $H_{1,n}^{(1)}$. 
By using \eqref{MCLT:1}-\eqref{MCLT:4}, we get \eqref{ggoal:1}.

Next we prove $(\mathrm{i})$. It follows from Taylor's theorem that
$$
-\frac{1}{\sqrt{n}}\partial_{\alpha}U_n^{(1)}(\alpha_0)=\int_{0}^{1}{\frac{1}{n}\partial_{\alpha}^2U_n^{(1)}(\alpha_0+u(\hat{\alpha}_n-\alpha_0))}du\sqrt{n}(\hat{\alpha}_n-\alpha_0).
$$
It holds
$$
\int_{0}^{1}{\frac{1}{n}\partial_{\alpha}^2U_n^{(1)}(\alpha_0+u(\hat{\alpha}_n-\alpha_0))}du\overset{P}{\to}-I_a(\alpha_0;\alpha_0)\quad(\text{under}\ H_{0,n}^{(1)}),
$$
which 
also holds under $H_{1,n}^{(1)}$. 
By noting that $I_a(\alpha_0;\alpha_0)$ is non-singular, it follows from Slutsky's theorem that
$$
\sqrt{n}(\hat{\alpha}_n-\alpha_0)\overset{d}{\to}I_a^{-1}(\alpha_0;\alpha_0)L_1=Y_1'\quad(\text{under}\ H_{1,n}^{(1)}).
$$
This completes the proof of $(\mathrm{i})$.

$(\mathrm{ii})$ First of all, 
we will prove that
\begin{align}\label{ggoal:2}
\frac{1}{\sqrt{nh_n}}\partial_\beta U_n^{(2)}(\beta_0|\alpha_0)\overset{d}{\to}L_2\sim N(I_b(\theta_0;\theta_0)u_\beta,I_b(\theta_0;\theta_0))\quad(\text{under}\ H_{1,n}^{(2)}). 
\end{align}
Letting 
$b(X_{t_{i-1}^n},\beta)=(b_{i-1,1}(\beta),\ldots,b_{i-1,d}(\beta))^\top$
and $u_\beta=(u_{\beta,1},\ldots,u_{\beta,p_2})^\top$,
one has that  
\begin{align*}
U_n^{(2)}(\beta|\bar{\alpha})&=-\frac{1}{2}\sum_{i=1}^{n}\left\{h_n^{-1}S^{-1}(X_{t_{i-1}^n},\bar{\alpha})\left[(X_{t_{i}^n}-X_{t_{i-1}^n}-h_nb(X_{t_{i-1}^n},\beta))^{\otimes 2}\right]\right\}\\
&=-\frac{1}{2h_n}\sum_{i=1}^{n}\sum_{p,q}S_{i-1,pq}^{-1}(\bar{\alpha})(X_{i,p}-X_{i-1,p}-h_nb_{i-1,p}(\beta))(X_{i,q}-X_{i-1,q}-h_nb_{i-1,q}(\beta)).
\end{align*}
For $m_1=1,\ldots,p_2$, 
we have
\begin{align*}
&\partial_{\beta_{m_1}}U_n^{(2)}(\beta_0|\alpha_0)=\sum_{i=1}^{n}{\eta_{i,1}^{m_1}}\\
&\eta_{i,1}^{m_1}=\sum_{p,q}S_{i-1,pq}^{-1}(\alpha_0)(\partial_{\beta_{m_1}}b)_{i-1,q}(\beta_0)(X_{i,p}-X_{i-1,p}-h_nb_{i-1,p}(\beta_0)).
\end{align*}
From Theorems $3.2$ and $3.4$ of Hall and Heyde \cite{hall}  and $\mathbf{C1}$, 
it is sufficient to show that under $H_{0,n}^{(2)}$, 
\begin{align}
&\sum_{i=1}^{n}\mathbb{E}_{\theta_{1,n}^*}\left[\frac{1}{\sqrt{nh_n}}\eta_{i,1}^{m_1}| \mathcal{G}_{i-1}^n\right]\overset{P}{\to}\sum_{j=1}^{p_2}I_b^{(m_1j)}(\theta_0;\theta_0)u_{\beta,j}, \label{MCLT:5}\\
&\sum_{i=1}^{n}\mathbb{E}_{\theta_{1,n}^*}\left[\frac{1}{nh_n}\eta_{i,1}^{m_1}\eta_{i,1}^{m_2}| \mathcal{G}_{i-1}^n\right]\overset{P}{\to}I_b^{(m_1m_2)}(\theta_0;\theta_0), \label{MCLT:6}\\
&\sum_{i=1}^{n}\mathbb{E}_{\theta_{1,n}^*}\left[\frac{1}{\sqrt{nh_n}}\eta_{i,1}^{m_1}| \mathcal{G}_{i-1}^n\right]\mathbb{E}_{\theta_{1,n}^*}\left[\frac{1}{\sqrt{nh_n}}\eta_{i,1}^{m_2}| \mathcal{G}_{i-1}^n\right]\overset{P}{\to}0, \label{MCLT:7}\\
&\sum_{i=1}^{n}\mathbb{E}_{\theta_{1,n}^*}\left[\frac{1}{(nh_n)^2}(\eta_{i,1}^{m_1})^4| \mathcal{G}_{i-1}^n\right]\overset{P}{\to}0,\label{MCLT:8}
\end{align}
where $m_1,m_2=1,\ldots,p_2$.

Proof of \eqref{MCLT:5}. By the It\^{o}-Taylor expansion, we have
$$
\mathbb{E}_{\theta_{1,n}^*}[X_{i,p}-X_{i-1,p}|\mathcal{G}_{i-1}^n]=h_nb_{i-1,p}(\beta_{1,n}^*)+R_{i-1}(h_n^2),
$$
and
\begin{align}\label{tay:beta}
\mathbb{E}_{\theta_{1,n}^*}[\eta_{i,1}^{m_1}|\mathcal{G}_{i-1}^n]&=\sum_{p,q}S_{i-1,pq}^{-1}(\alpha_0)(\partial_{\beta_{m_1}}b)_{i-1,q}(\beta_0)\left(\mathbb{E}_{\theta_{1,n}^*}[X_{i,p}-X_{i-1,p}|\mathcal{G}_{i-1}^n]-h_nb_{i-1,p}(\beta_0)\right)\notag\\
&=h_n\sum_{p,q}S_{i-1,pq}^{-1}(\alpha_0)(\partial_{\beta_{m_1}}b)_{i-1,q}(\beta_0)(b_{i-1,p}(\beta_{1,n}^*)-b_{i-1,p}(\beta_0))+R_{i-1}(h_n^2).
\end{align}
Since it follows from Taylor's theorem that
\begin{align*}
b_{i-1,p}(\beta_{1,n}^*)-b_{i-1,p}(\beta_0)&=(\partial_\beta b)_{i-1,p}(\beta_0)(\beta_{1,n}^*-\beta_0)\\
&\quad+(\beta_{1,n}^*-\beta_0)^\top\left(\int_{0}^{1}u(\partial_\beta^2 b)_{i-1,p}(\beta_0+u(\beta_{1,n}^*-\beta_0)) du\right)(\beta_{1,n}^*-\beta_0)\\
&=\frac{1}{\sqrt{nh_n}}(\partial_{\beta}b)_{i-1,p}(\beta_0)u_\beta+\frac{1}{nh_n}R_{i-1}(1), 
\end{align*}
\eqref{tay:beta} implies that
\begin{align*}
\mathbb{E}_{\theta_{1,n}^*}[\eta_{i,1}^{m_1}|\mathcal{G}_{i-1}^n]&=\sqrt{\frac{h_n}{n}}\sum_{p,q}S_{i-1,pq}^{-1}(\alpha_0)(\partial_{\beta_{m_1}}b)_{i-1,q}(\beta_0)(\partial_{\beta}b)_{i-1,p}(\beta_0)u_\beta+\frac{1}{n}R_{i-1}(1)\\
&=\sqrt{\frac{h_n}{n}}\sum_{j=1}^{p_2}\sum_{p,q}S_{i-1,pq}^{-1}(\alpha_0)(\partial_{\beta_{m_1}}b)_{i-1,q}(\beta_0)(\partial_{\beta_{j}}b)_{i-1,p}(\beta_0)u_{\beta,j}+\frac{1}{n}R_{i-1}(1)\\
&=\sqrt{\frac{h_n}{n}}\sum_{j=1}^{p_2}\left((\partial_{\beta_{m_1}}b)_{i-1}(\beta_0)\right)^\top S^{-1}_{i-1}(\alpha_0)(\partial_{\beta_{j}}b)_{i-1}(\beta_0)u_{\beta,j}+\frac{1}{n}R_{i-1}(1).
\end{align*}
Therefore, 
it follows from Lemma $8$ of Kessler \cite{Kessler_1997} that under $H_{0,n}^{(1)}$,
\begin{align*}
\sum_{i=1}^{n}\mathbb{E}_{\theta_{1,n}^*}\left[\frac{1}{\sqrt{nh_n}}\eta_{i,1}^{m_1}| \mathcal{G}_{i-1}^n\right]&=\frac{1}{n}\sum_{i=1}^{n}\sum_{j=1}^{p_2}\left((\partial_{\beta_{m_1}}b)_{i-1}(\beta_0)\right)^\top S^{-1}_{i-1}(\alpha_0)(\partial_{\beta_{j}}b)_{i-1}(\beta_0)u_{\beta,j}\\
&\quad+\frac{1}{n}\sum_{i=1}^{n}\frac{1}{\sqrt{nh_n}}R_{i-1}(1)\\
&\overset{P}{\to}\sum_{j=1}^{p_2}\int\left(\partial_{\beta_{m_1}}b(x,\beta_0)\right)^\top S^{-1}(x,\alpha_0)\left(\partial_{\beta_{j}}b(x,\beta_0)\right)\mu_{\theta_0}(dx)u_{\beta,j}\\
&=\sum_{j=1}^{p_2}{I_{b}^{(m_1j)}(\theta_0;\theta_0)u_{\beta,j}}.
\end{align*}

Proof of \eqref{MCLT:6}. By the It\^{o}-Taylor expansion, one has that
$$
\mathbb{E}_{\theta_{1,n}^*}[(X_{i,p}-X_{i-1,p}-h_nb_{i-1,p}(\beta_0))(X_{i,q}-X_{i-1,q}-h_nb_{i-1,q}(\beta_0))|\mathcal{G}_{i-1}^n]=h_nS_{i-1,pq}(\alpha_0)+\frac{1}{\sqrt{n}}R_{i-1}(h_n).
$$
Setting
$\delta_{ps}:=\begin{cases}1&(p=s), \\0&(p\neq s), \end{cases}$ 
we obtain that
\begin{align*}
\mathbb{E}_{\theta_{1,n}^*}[\eta_{i,1}^{m_1}\eta_{i,1}^{m_2}|\mathcal{G}_{i-1}^n]&=\sum_{p,q,r,s}S^{-1}_{i-1,pq}(\alpha_0)S^{-1}_{i-1,rs}(\alpha_0)(\partial_{\beta_{m_1}}b)_{i-1,q}(\beta_0)(\partial_{\beta_{m_2}}b)_{i-1,s}(\beta_0)\\
&\ \ \times\mathbb{E}_{\theta_{1,n}^*}[(X_{i,p}-X_{i-1,p}-h_nb_{i-1,p}(\beta_0))(X_{i,r}-X_{i-1,r}-h_nb_{i-1,r}(\beta_0))|\mathcal{G}_{i-1}^n]\\
&=h_n\sum_{p,q,r,s}S^{-1}_{i-1,pq}(\alpha_0)S^{-1}_{i-1,rs}(\alpha_0)S_{i-1,pr}(\alpha_0)\\
&\ \ \times(\partial_{\beta_{m_1}}b)_{i-1,q}(\beta_0)(\partial_{\beta_{m_2}}b)_{i-1,s}(\beta_0)+\frac{1}{\sqrt{n}}R_{i-1}(h_n)\\
&=h_n\sum_{p,q,s}S^{-1}_{i-1,pq}(\alpha_0)\left(\sum_{r=1}^{d}S^{-1}_{i-1,rs}(\alpha_0)S_{i-1,rp}(\alpha_0)\right)\\
&\ \ \times(\partial_{\beta_{m_1}}b)_{i-1,q}(\beta_0)(\partial_{\beta_{m_2}}b)_{i-1,s}(\beta_0)+\frac{1}{\sqrt{n}}R_{i-1}(h_n)\\
&=h_n\sum_{p,q,s}S^{-1}_{i-1,pq}(\alpha_0)\delta_{ps}(\partial_{\beta_{m_1}}b)_{i-1,q}(\beta_0)(\partial_{\beta_{m_2}}b)_{i-1,s}(\beta_0)+\frac{1}{\sqrt{n}}R_{i-1}(h_n)\\
&=h_n\sum_{p,q}S^{-1}_{i-1,pq}(\alpha_0)(\partial_{\beta_{m_1}}b)_{i-1,q}(\beta_0)(\partial_{\beta_{m_2}}b)_{i-1,p}(\beta_0)+\frac{1}{\sqrt{n}}R_{i-1}(h_n)\\
&=h_n\left((\partial_{\beta_{m_1}}b)_{i-1}(\beta_0)\right)^\top S^{-1}_{i-1}(\alpha_0)\left((\partial_{\beta_{m_2}}b)_{i-1}(\beta_0)\right)+\frac{1}{\sqrt{n}}R_{i-1}(h_n).
\end{align*}
Hence it holds
\begin{align*}
\sum_{i=1}^{n}\mathbb{E}_{\theta_{1,n}^*}\left[\frac{1}{nh_n}\eta_{i,1}^{m_1}\eta_{i,1}^{m_2}| \mathcal{G}_{i-1}^n\right]&=\frac{1}{n}\sum_{i=1}^{n}\left((\partial_{\beta_{m_1}}b)_{i-1}(\beta_0)\right)^\top S^{-1}_{i-1}(\alpha_0)\left((\partial_{\beta_{m_2}}b)_{i-1}(\beta_0)\right)\\
&\quad+\frac{1}{n}\sum_{i=1}^{n}\frac{1}{\sqrt{n}}R_{i-1}(1)\\
&\overset{P}{\to}\int\left(\partial_{\beta_{m_1}}b(x,\beta_0)\right)^\top S^{-1}(x,\alpha_0)\left(\partial_{\beta_{m_2}}b(x,\beta_0)\right)\mu_{\theta_0}(dx)\\
&=I_b^{(m_1m_2)}(\theta_0;\theta_0).
\end{align*}

Proof of \eqref{MCLT:7}. From the proof of \eqref{MCLT:5}, we have
\begin{align*}
&\mathbb{E}_{\theta_{1,n}^*}\left[\frac{1}{\sqrt{nh_n}}\eta_{i,1}^{m_1}| \mathcal{G}_{i-1}^n\right]=\frac{1}{n}R_{i-1}(1),\\
&\mathbb{E}_{\theta_{1,n}^*}\left[\frac{1}{\sqrt{nh_n}}\eta_{i,1}^{m_2}| \mathcal{G}_{i-1}^n\right]=\frac{1}{n}R_{i-1}(1).
\end{align*}
Hence
$$
\sum_{i=1}^{n}\mathbb{E}_{\theta_{1,n}^*}\left[\frac{1}{\sqrt{nh_n}}\eta_{i,1}^{m_1}| \mathcal{G}_{i-1}^n\right]\mathbb{E}_{\theta_{1,n}^*}\left[\frac{1}{\sqrt{nh_n}}\eta_{i,1}^{m_2}| \mathcal{G}_{i-1}^n\right]=\frac{1}{n}\sum_{i=1}^{n}\frac{1}{n}R_{i-1}(1)\overset{P}{\to}0.
$$

Proof of \eqref{MCLT:8}. 
It follows from the It\^{o}-Taylor expansion that
$$
\mathbb{E}_{\theta_{1,n}^*}\left[\prod_{j=1}^4(X_{i,p_j}-X_{i-1,p_j}-h_nb_{i-1,p_j}(\beta_0))|\mathcal{G}_{i-1}^n\right]=R_{i-1}(h_n^2).
$$
Hence we can evaluate
\begin{align*}
\mathbb{E}_{\theta_{1,n}^*}[(\eta_{i,1}^{m_1})^4|\mathcal{G}_{i-1}^n]&=\mathbb{E}_{\theta_{1,n}^*}[(\sum_{p,q}S_{i-1,pq}^{-1}(\alpha_0)(\partial_{\beta_{m_1}}b)_{i-1,q}(\beta_0)\\
&\quad\quad\quad\quad\quad\times(X_{i,p}-X_{i-1,p}-h_nb_{i-1,p}(\beta_0))
)_{_{}}^4|\mathcal{G}_{i-1}^n]\\
&\leq R_{i-1}(1)\sum_{1\leq p_1,\ldots,p_4\leq d}\mathbb{E}_{\theta_{1,n}^*}\left[\prod_{j=1}^4(X_{i,p_j}-X_{i-1,p_j}-h_nb_{i-1,p_j}(\beta_0))|\mathcal{G}_{i-1}^n\right]\\
&\leq R_{i-1}(h_n^2).
\end{align*}
Therefore it holds
$$
0\leq\sum_{i=1}^{n}\mathbb{E}_{\theta_{1,n}^*}\left[\frac{1}{(nh_n)^2}(\eta_{i,1}^{m_1})^4| \mathcal{G}_{i-1}^n\right]\leq\frac{1}{n}\sum_{i=1}^{n}\frac{1}{n}R_{i-1}(1)\overset{P}{\to}0,
$$
and 
$$
\sum_{i=1}^{n}\mathbb{E}_{\theta_{1,n}^*}\left[\frac{1}{(nh_n)^2}(\eta_{i,1}^{m_1})^4| \mathcal{G}_{i-1}^n\right]\overset{P}{\to}0.
$$
Using \eqref{MCLT:5}-\eqref{MCLT:8},
we obtain \eqref{ggoal:2}. 

Next, one shows $(\mathrm{ii})$. 
From Taylor's theorem with respect to $\beta$, we have
\begin{align}\label{lem3:1}
-\frac{1}{\sqrt{nh_n}}\partial_\beta U_n^{(2)}(\beta_0|\hat{\alpha}_n)=\int_{0}^{1}{\frac{1}{nh_n}\partial_\beta^2U_n^{(2)}(\beta_0+u(\hat{\beta}_n-\beta_0)|\hat{\alpha}_n)}du\sqrt{nh_n}(\hat{\beta}_n-\beta_0),
\end{align}
and Taylor's theorem with respect to  $\alpha$ yields that 
\begin{align}\label{lem3:2}
\frac{1}{\sqrt{nh_n}}\partial_\beta U_n^{(2)}(\beta_0|\hat{\alpha}_n)&=\frac{1}{\sqrt{nh_n}}\partial_\beta U_n^{(2)}(\beta_0|\alpha_0)\notag\\
&\quad+\int_{0}^{1}{\frac{1}{n\sqrt{h_n}}\partial_{\alpha\beta}^2U_n^{(2)}(\beta_0|\alpha_0+u(\hat{\alpha}_n-\alpha_0))}du\sqrt{n}(\hat{\alpha}_n-\alpha_0).
\end{align}
One has that under $H_{0,n}^{(2)}$,
\begin{align*}
&\int_{0}^{1}{\frac{1}{nh_n}\partial_\beta^2U_n^{(2)}(\beta_0+u(\hat{\beta}_n-\beta_0)|\hat{\alpha}_n)}du\overset{P}{\to}-I_b(\theta_0;\theta_0),\\
&\int_{0}^{1}{\frac{1}{n\sqrt{h_n}}\partial_{\alpha\beta}^2U_n^{(2)}(\beta_0|\alpha_0+u(\hat{\alpha}_n-\alpha_0))}du\overset{P}{\to}0,
\end{align*}
which also hold under $H_{1,n}^{(2)}$ by $\mathbf{C1}$. 
Therefore,
it follows from \eqref{lem3:1} and \eqref{lem3:2}  that 
$$
-\int_{0}^{1}{\frac{1}{nh_n}\partial_\beta^2U_n^{(2)}(\beta_0+u(\hat{\beta}_n-\beta_0)|\hat{\alpha}_n)}du\sqrt{nh_n}(\hat{\beta}_n-\beta_0)=\frac{1}{\sqrt{nh_n}}\partial_\beta U_n^{(2)}(\beta_0|\alpha_0)+o_P(1).
$$
Since $I_b(\theta_0;\theta_0)$ is non-singular, 
by Slutsky's theorem, one has that
$$
\sqrt{nh_n}(\hat{\beta}_n-\beta_0)\overset{d}{\to}I_b^{-1}(\theta_0;\theta_0)L_2=Y_2'\sim N(u_\beta,I_b^{-1}(\theta_0;\theta_0))\quad(\text{under}\ H_{1,n}^{(2)}).
$$
This completes the proof of $(\mathrm{ii})$.
\end{proof}
\begin{proof}[Proof of Theorem 3]\ $(\mathrm{i})$ By Taylor's theorem,
$$
U_n^{(1)}(\alpha_0)-U_n^{(1)}(\hat{\alpha}_n)=\left(\sqrt{n}(\hat{\alpha}_n-\alpha_0)\right)^\top J_{a,n}(\alpha_0,\hat{\alpha}_n)\sqrt{n}(\hat{\alpha}_n-\alpha_0).
$$
It follows from Lemma \ref{lem:3} that
$$
\sqrt{n}(\hat{\alpha}_n-\alpha_0)\overset{d}{\to}Y_1'\sim N(u_\alpha,I_a^{-1}(\alpha_0;\alpha_0))\quad(\text{under}\ H_{1,n}^{(1)}).
$$
Moreover, $\mathbf{C1}$ and
$$
J_{a,n}(\alpha_0,\hat{\alpha}_n)\overset{P}{\to}-\frac{1}{2}I_a(\alpha_0;\alpha_0)\quad(\text{under}\ H_{0,n}^{(1)})
$$
implies
$$
J_{a,n}(\alpha_0,\hat{\alpha}_n)\overset{P}{\to}-\frac{1}{2}I_a(\alpha_0;\alpha_0)\quad(\text{under}\ H_{1,n}^{(1)}).
$$
By noting that $Y_1'=I_a^{-\frac{1}{2}}(\alpha_0;\alpha_0)Z_1',\ Z_1'\sim N(I_a^{\frac{1}{2}}(\alpha_0;\alpha_0)u_\alpha,E_{p_1})$, it follows from the continuous mapping theorem that
\begin{align*}
\Lambda_n^{(1)}&=-2(U_n^{(1)}(\alpha_0)-U_n^{(1)}(\hat{\alpha}_n))\\
&\overset{d}{\to}{Y_1'}^\top I_a(\alpha_0;\alpha_0)Y_1'\\
&={Z_1'}^\top Z_1'\\
&\sim \chi_{p_1}^2(u_\alpha^\top I_a(\alpha_0;\alpha_0)u_\alpha)\quad(\text{under}\ H_{1,n}^{(1)}).
\end{align*}
Moreover,
 by Taylor's theorem with respect to $\beta$, we have
$$
U_n^{(2)}(\beta_0|\hat{\alpha}_n)-U_n^{(2)}(\hat{\beta}_n|\hat{\alpha}_n)=\left(\sqrt{nh_n}(\hat{\beta}_n-\beta_0)\right)^\top J_{b,n}(\beta_0,\hat{\beta}_n)\sqrt{nh_n}(\hat{\beta}_n-\beta_0).
$$
It follows from Lemma \ref{lem:3}  that
$$
\sqrt{nh_n}(\hat{\beta}_n-\beta_0)\overset{d}{\to}Y_2'\sim N(u_\beta,I_b^{-1}(\theta_0;\theta_0))\quad(\text{under}\ H_{1,n}^{(2)}).
$$
Next, we can check
$$
J_{b,n}(\beta_0,\hat{\beta}_n)\overset{P}{\to}-\frac{1}{2}I_b(\theta_0;\theta_0)\quad(\text{under}\ H_{1,n}^{(2)}).
$$
Note that $Y_2'=I_b^{-\frac{1}{2}}(\theta_0;\theta_0)Z_2',\ Z_2'\sim N(I_b^{\frac{1}{2}}(\theta_0;\theta_0)u_\beta,E_{p_2})$. 
By the continuous mapping theorem,  one has that 
\begin{align*}
\Lambda_n^{(2)}&=-2(U_n^{(2)}(\beta_0|\hat{\alpha}_n)-U_n^{(2)}(\hat{\beta}_n|\hat{\alpha}_n))\\
&\overset{d}{\to}{Y_2'}^\top I_b(\theta_0;\theta_0)Y_2'\\
&={Z_2'}^\top Z_2'\\
&\sim \chi_{p_2}^2(u_\beta^\top I_b(\theta_0;\theta_0)u_\beta)\quad(\text{under}\ H_{1,n}^{(2)}).
\end{align*}
This completes the proof of the Likelihood type.

$(\mathrm{ii})$ From $\mathbf{C1}$, we have
$$
I_{a,n}(\hat{\alpha}_n)\overset{P}{\to}I_a(\alpha_0;\alpha_0)\quad(\text{under}\ H_{1,n}^{(1)}).
$$
Therefore, 
it follows from Lemma \ref{lem:3} and the continuous mapping theorem that
\begin{align*}
W_n^{(1)}&=\left(\sqrt{n}(\hat{\alpha}_n-\alpha_0)\right)^\top I_{a,n}(\hat{\alpha}_n)\sqrt{n}(\hat{\alpha}_n-\alpha_0)\\
&\overset{d}{\to}{Y_1'}^\top I_a(\alpha_0;\alpha_0)Y_1'\\
&\sim\chi^2_{p_1}(u_\alpha^\top I_a(\alpha_0;\alpha_0)u_\alpha)\quad(\text{under}\ H_{1,n}^{(1)}).
\end{align*}
Furthermore, we obtain 
$$
I_{b,n}(\hat{\beta}_n|\hat{\alpha}_n)\overset{P}{\to}I_b(\theta_0;\theta_0)\quad(\text{under}\ H_{0,n}^{(2)}),
$$
and 
\begin{align*}
W_n^{(2)}&=\left(\sqrt{nh_n}(\hat{\beta}_n-\beta_0)\right)^\top I_{b,n}(\hat{\beta}_n|\hat{\alpha}_n)\sqrt{nh_n}(\hat{\beta}_n-\beta_0)\\
&\overset{d}{\to}{Y_2'}^\top I_b(\theta_0;\theta_0)Y_2'\\
&\sim\chi^2_{p_2}(u_\beta^\top I_b(\theta_0;\theta_0)u_\beta)\quad(\text{under}\ H_{1,n}^{(2)}).
\end{align*}
This completes the proof of the Wald type.

$(\mathrm{iii})$ From the proof of Lemma \ref{lem:3}, 
it is shown that
$$
\frac{1}{\sqrt{n}}\partial_{\alpha}U_n^{(1)}(\alpha_0)\overset{d}{\to}L_1=I_a(\alpha_0;\alpha_0)Y_1'\sim N(I_a(\alpha_0;\alpha_0)u_\alpha,I_a(\alpha_0;\alpha_0))\quad(\text{under}\ H_{1,n}^{(1)}).
$$
On the other hand, we can check
$$
\bar{I}_{a,n}(\hat{\alpha}_n)\overset{P}{\to}I^{-1}_a(\alpha_0;\alpha_0)\quad(\text{under}\ H_{1,n}^{(1)}).
$$
Therefore, it follows from the continuous mapping theorem that
\begin{align*}
R_n^{(1)}&=\left(\frac{1}{\sqrt{n}}\partial_{\alpha}U_n^{(1)}(\alpha_0)\right)^\top \bar{I}_{a,n}(\hat{\alpha}_n)\frac{1}{\sqrt{n}}\partial_{\alpha}U_n^{(1)}(\alpha_0)\\
&\overset{d}{\to}L_1^\top I^{-1}_{a}(\alpha_0;\alpha_0)L_1\\
&={Y_1'}^\top I_{a}(\alpha_0;\alpha_0)Y_1'\\
&\sim\chi_{p_1}^2(u_\alpha^\top I_a(\alpha_0;\alpha_0)u_\alpha)\quad(\text{under}\ H_{1,n}^{(1)}).
\end{align*}
Moreover, it follows from the proof of  Lemma \ref{lem:3} that
$$
\frac{1}{\sqrt{nh_n}}\partial_{\beta}U_n^{(2)}(\beta_0|\alpha_0)\overset{d}{\to}L_2=I_b(\theta_0;\theta_0)Y_2'\sim N(I_b(\theta_0;\theta_0)u_\beta,I_b(\theta_0;\theta_0))\quad(\text{under}\ H_{1,n}^{(2)}).
$$
By $\mathbf{C1}$ and Taylor's theorem with respect to $\alpha$, 
we have
\begin{align*}
\frac{1}{\sqrt{nh_n}}\partial_{\beta}U_n^{(2)}(\beta_0|\hat{\alpha}_n)&=\frac{1}{\sqrt{nh_n}}\partial_{\beta}U_n^{(2)}(\beta_0|\alpha_0)\\
&\quad+\int_{0}^{1}{\frac{1}{n\sqrt{h_n}}\partial_\alpha\partial_\beta U_n^{(2)}(\beta_0|\alpha_0+u(\hat{\alpha}_n-\alpha_0))}du\sqrt{n}(\hat{\alpha}_n-\alpha_0)\\
&=\frac{1}{\sqrt{nh_n}}\partial_{\beta}U_n^{(2)}(\beta_0|\alpha_0)+o_P(1)\quad(\text{under}\ H_{1,n}^{(2)}).
\end{align*}
Next, we can show that
$$
\bar{I}_{b,n}(\hat{\beta}_n|\hat{\alpha}_n)\overset{P}{\to}I^{-1}_b(\theta_0;\theta_0)\quad(\text{under}\ H_{1,n}^{(2)}).
$$
Therefore, by the continuous mapping theorem, it holds 
\begin{align*}
R_n^{(2)}&=\left(\frac{1}{\sqrt{nh_n}}\partial_{\beta}U_n^{(2)}(\beta_0|\hat{\alpha}_n)\right)^\top \bar{I}_{b,n}(\hat{\beta}_n|\hat{\alpha}_n)\frac{1}{\sqrt{nh_n}}\partial_{\beta}U_n^{(2)}(\beta_0|\hat{\alpha}_n)\\
&\overset{d}{\to}L_2^\top I^{-1}_{b}(\theta_0;\theta_0)L_2\\
&={Y_2'}^\top I_{b}(\theta_0;\theta_0)Y_2'\\
&\sim\chi_{p_2}^2(u_\beta^\top I_b(\theta_0;\theta_0)u_\beta)\quad(\text{under}\ H_{1,n}^{(2)}), 
\end{align*}
which completes the proof of the Rao type.
\end{proof}


\bibliographystyle{abbrv}
\nocite{*}
\bibliography{main.bbl}

\begin{thebibliography}{10}

\bibitem{de2013family}
A.~De~Gregorio and S.~M. Iacus.
\newblock On a family of test statistics for discretely observed diffusion
  processes.
\newblock {\em Journal of Multivariate Analysis}, 122:292--316, 2013.

\bibitem{de2019empirical}
A.~De~Gregorio and S.~M. Iacus.
\newblock Empirical {$L$}{$^{2}$}-distance test statistics for ergodic
  diffusions.
\newblock {\em Statistical Inference for Stochastic Processes}, 22(2):233--261,
  2019.

\bibitem{ferguson}
T.~S. Ferguson.
\newblock {\em A course in large sample theory}.
\newblock Chapman {\&} Hall, London, 1996.

\bibitem{florens_1989}
D.~Florens-Zmirou.
\newblock Approximate discrete-time schemes for statistics of diffusion
  processes.
\newblock {\em Statistics: A Journal of Theoretical and Applied Statistics},
  20(4):547--557, 1989.

\bibitem{genon_1993}
V.~Genon-Catalot and J.~Jacod.
\newblock On the estimation of the diffusion coefficient for multi-dimensional
  diffusion processes.
\newblock In {\em Annales de l'IHP Probabilit{\'e}s et statistiques},
  volume~29, pages 119--151, 1993.

\bibitem{Gobet_2002}
E.~Gobet.
\newblock {LAN} property for ergodic diffusions with discrete observations.
\newblock In {\em Annales de l'IHP Probabilit{\'e}s et statistiques},
  volume~38, pages 711--737, 2002.

\bibitem{hall}
P.~Hall and C.~C. Heyde.
\newblock {\em Martingale limit theory and its application}.
\newblock Academic press, New York, 1980.


\bibitem{Kessler_1995}
M.~Kessler.
\newblock Estimation des param{\`e}tres d'une diffusion par des contrastes corrig{\'e}s.
\newblock {\em Comptes Rendus de l'Acad{\'e}mie des Sciences - Series I - Mathematics}, 320:359--362, 1995.



\bibitem{Kessler_1997}
M.~Kessler.
\newblock Estimation of an ergodic diffusion from discrete observations.
\newblock {\em Scandinavian Journal of Statistics}, 24(2):211--229, 1997.

\bibitem{Kitagawa_2014}
H.~Kitagawa and M.~Uchida.
\newblock Adaptive test statistics for ergodic diffusion processes sampled at
  discrete times.
\newblock {\em Journal of Statistical Planning and Inference}, 150:84--110,
  2014.

\bibitem{lehmann}
E.~Lehmann.
\newblock {\em Elements of Large-Sample Theory}.
\newblock Springer Texts in Statistics. Springer-Verlag, New York, 1999.

\bibitem{nakakita2019adaptive}
S.~H. Nakakita and M.~Uchida.
\newblock Adaptive test for ergodic diffusions plus noise.
\newblock {\em Journal of Statistical Planning and Inference}, 203:131--150,
  2019.

\bibitem{Uchida_2012}
M.~Uchida and N.~Yoshida.
\newblock Adaptive estimation of an ergodic diffusion process based on sampled
  data.
\newblock {\em Stochastic Processes and their Applications}, 122(8):2885--2924,
  2012.

\bibitem{van_der_Vaart}
A.~W. Van~der Vaart.
\newblock {\em Asymptotic statistics}.
\newblock Cambridge university press, Cambridge, 1998.

\bibitem{yoshida1992estimation}
N.~Yoshida.
\newblock Estimation for diffusion processes from discrete observation.
\newblock {\em Journal of Multivariate Analysis}, 41(2):220--242, 1992.


\bibitem{yoshida2011QLA}
N.~Yoshida.
\newblock Polynomial type large deviation inequalities and quasi-likelihood analysis for stochastic differential equations.
\newblock {\em Annals of the Institute of Statistical Mathematics}, 63:431--479, 2011.



\end{thebibliography}

\end{document}